\newcommand{\E}[2][]{\ensuremath{\mathbb{E}_{#1} \left[#2 \right]}}
\newcommand{\Prob}[2][]{\ensuremath{\mathbb{P}_{#1} \left(#2 \right)}}
\newcommand{\dd}{\mathrm d}
\newcommand{\N}{\mathbb{N}}
\def\supp{{\rm Supp}}
\newcommand{\pairing}[1]{\left \langle #1 \right \rangle }
\newcommand{\R}{\mathbb{R}}
\newcommand{\Ecal}{\mathcal{E}}
\newcommand{\massminus}[1]{\langle m, #1 \rangle}
\newcommand{\mass}[1]{\left\langle m \mathbf{1}_{m \geq 1}, #1 \right\rangle}
\newcommand{\init}{\mu}
\newcommand{\ME}{\mathcal{M}_{+}(E)}
\newcommand{\eps}{\varepsilon}
\newtheorem{thm}{Theorem}[section]
\newtheorem{lemma}[thm]{Lemma}
\newtheorem{clm}{Claim}[thm]
\newtheorem{prop}[thm]{Proposition}
\newtheorem{cor}[thm]{Corollary}
\newtheorem{assumption}[thm]{Assumption}
\theoremstyle{definition}
\newtheorem{rmq}{Remark}[section]
\newtheorem{example}[rmq]{Example}
\newtheorem{examples}[rmq]{Examples}
\newtheorem{defn}{Definition}[section]
\begin{document}

\title{Convergence of cluster coagulation dynamics}
\author{L. Andreis\footnote{Politecnico di Milano, Dipartimento di matematica,
Piazza Leonardo da Vinci, 32,
20133 Milano.}, T. Iyer\footnote{Weierstrass Institute for Applied Analysis and Stochastics, Mohrenstrasse 39, 10117 Berlin, Germany.},  E. Magnanini\footnotemark[2].}
\maketitle
\abstract{We study hydrodynamic limits of the cluster coagulation model; a coagulation model introduced by Norris [\textit{Comm. Math. Phys.}, 209(2):407-435 (2000)]. In this process, pairs of particles $x,y$ in a measure space $E$, merge to form a single new particle $z$ according to a transition kernel $K(x, y, \dd z)$, in such a manner that a quantity, one may regard as the total \emph{mass} of the system, is conserved. 
%This is a coagulation process where pairs of particles in a measure This is a coagulation process where particles are characterised by a value in a certain measure space $E$. A pair of particles with values $(x,y)\in E^2$  interacts, coagulating into a new particle with value $z$ according to a transition kernel $K(x,y, \mathrm{d} z)$. Each interaction is such that the number of particles is decreased by one and a function of the interacting pair (with the interpretation of the total mass) is preserved.  
This model is general enough to incorporate various inhomogeneities in the evolution of clusters, for example, their shape, or their location in space. 
We derive sufficient criteria  for trajectories associated with this process to concentrate among solutions of a generalisation of the \emph{Flory equation}, and, in some special cases, by means of a uniqueness result for solutions of this equation, prove a weak law of large numbers.  This multi-type Flory equation is associated with \emph{conserved quantities} associated with the process, which may encode different information to conservation of mass (for example, conservation of centre of mass in spatial models). We also apply criteria for \emph{gelation} in this process to derive sufficient criteria for this equation to exhibit \emph{gelling} solutions. When this occurs, this multi-type Flory equation encodes, via the associated conserved property, the interaction between the \emph{gel} and the finite size \emph{sol} particles.}
%In a parallel paper we provide sufficient criteria for gelation in this model, which translates in sufficient criteria for this equation to have gelling solutions.
%This generalisation involves the concept of \emph{conserved quantity}, which we introduce in order to go beyond the preservation of mass. Indeed, in the cases where gelation occurs and the gel interacts in a non-trivial way with the finite size particles, such conserved quantity (which is not only the mass) plays a non-trivial role in the limiting dynamics.  In some special cases, we prove a law of large numbers for the trajectory of the empirical measure of the stochastic cluster coagulation process, by means of a uniqueness result for the solution of the aforementioned generalised Flory equation.}
\noindent  \bigskip
\\
{\bf Keywords:}  Cluster coagulation, Marcus-Luschnikov process, Smoluchowski equation, Flory equation, gelation.
\\\\
{\bf AMS Subject Classification 2010:} 60K35,  35Q70, 82C22.

\maketitle
%\tableofcontents
\section{Introduction} \label{intro}
Coagulation models are prevalent across numerous disciplines, spanning from physical chemistry (to describe polymer formation), to astrophysics, where they simulate galaxy formation. A natural coagulation model involves particles moving in space, with larger particles exhibiting slower movement compared to smaller ones.
A classical `mean field' description of this process via Markovian dynamics is known as the \emph{Marcus-Lushnikov model}~\cite{Marcus68, Gil72, Lushnikov78}. Here, pairs of particles of masses $x$ and $y$ merge at a rate $\bar{K}(x,y)$ (for an appropriate function $\bar{K}(x,y)$). The re-scaled limiting behaviour of the trajectories of empirical measures encoding particle masses in this model, is generally expected to follow a system of measure-valued differential equations known as the \emph{Smoluchowski} (or \emph{Flory}) equations. This re-scaling, which involves re-scaling time with the system size (to `slow time down'), and looking at a fixed \emph{volume} (the whole space $E$), is known as taking a \emph{hydrodynamic limit}. 

A popular question with regards to coagulation processes, concerns whether or not one sees the occurrence of \emph{macroscopic} or \emph{giant} particles by some time $t > 0$; a phenomenon known as \emph{gelation}. \emph{Gelation} is generally defined in terms of whether or not a solution of the Smoluchowski (or Flory) equation fails to `conserve mass', which means, intuitively, that mass is lost to `infinite' particles. Meanwhile, the term stochastic gelation, generally refers to the formation of `large particles' in a bounded amount of time in the Marcus--Lushnikov process. When the trajectories of the Marcus--Lushnikov process are concentrated on solutions of the Smoluchowski equation, Jeon showed that these two notions of gelation are equivalent~\cite[Theorem~5]{jeon98}. However, in certain regimes, it may be the case, that, once gelation occurs, the `gel' or macroscopic particles, interact with the microscopic ones or `sol', in such a way that the trajectories of the Marcus--Lushnikov process only concentrate around solutions of the Smoluchowski equation for a short time before gelation. In such regimes, a correction term encoding this `sol-gel' interaction is required in the Smoluchowkski equation, this is known as the Flory equation. A weak law of large numbers of the Marcus--Lushnikov process to the Flory equation in a particular case was first proved by Norris~\cite{norris-cluster-coag}, and later, concentration of trajectories around solutions of this equation were proved in \cite{fournier-giet-04, rezakhanlou2013}. 

A limitation of the classical Marcus--Lushnikov model is that the dynamics only include information of the masses of particles, and not, for example, other inhomogeneities such as their location in space. 
In~\cite{norris-cluster-coag}, Norris introduced the \emph{cluster coagulation model}, which allows one to incorporate these features rather generally, (see also~\cite{Norris} for a variant where clusters move diffusively). In a particular, limited regime of this model, Norris proved a weak law of large numbers for the dynamics of this model, with exponential rates of convergence~\cite[Theorem~4.2 \&~4.3]{norris-cluster-coag}. It should be noted that the cluster coagulation model is general enough to encompass models such as those studied by Jacquot~\cite{jacquot-10} and later work of Heydecker and Patterson~\cite{heydecker2019bilinear}. In ~\cite{jacquot-10}, Jacquot proves a weak law of large numbers for the `historical trees' encoding histories of clusters in the Marcus-Lushnikov process is proved; whilst in~\cite{heydecker2019bilinear}, Heydecker and Patterson prove a weak law of large numbers in the special case of `bilinear' kernels. For more examples of cluster coagulation processes we refer the reader to~\cite[Section~3.1]{AnIyMa24}. Also note that more recently, from the perspective of analysis, a strain of research has focused on the study of multi-component generalisations of the Smoluchowski coagulation equations, where the mass variable is substituted by a variable in a  $d$-dimensional Euclidean space, and there is an additional source term corresponding to the system not being in equilibrium (see for example, \cite{FeLuNoVe21, FeLuNoVe22, Thr23} and reference therein). In general, without the source term, these equations are particular instances of the Smoluchowski equation associated with the cluster coagulation model.

In recent work~\cite{AnIyMa24}, the authors have proved sufficient criteria for stochastic gelation to occur in the cluster coagulation process. In this paper, we derive criteria for concentration of trajectories associated with the cluster coagulation model around solutions of a multi-type Flory equation, generalising the classical Flory equation. We also provide criteria for solutions of this equation to be unique, hence for sufficient condition for a weak law of large numbers associated with these trajectories. As a result of the aforementioned device due to Jeon~\cite[Theorem~5]{jeon98} (reformulated in~\cite[Theorem~1.1]{AnIyMa24}) this concentration also leads to sufficient criteria for the associated multi-type Flory equation to exhibit gelling solutions.

\subsection{Overview on our contribution}
%Our goal in this paper is to study hydrodynamic limits associated with the cluster coagulation model.  
The main novelty of this paper involves the idea of 
a \emph{conserved quantity} appearing in a generalisation of the Flory equation; corresponding to an invariant associated with the cluster coagulation process. From the perspective of applications, this quantity may correspond to the total mass of the system, or, depending on the setting, for example, the `centre of mass' or `momentum'. 
\begin{enumerate}
    \item In Definition~\ref{weak_sol} we define a \emph{multi-type Flory equation}, expected to encode the limiting behaviour of the cluster coagulation process, when there are `conserved quantities'. Then, in Theorem~\ref{spat_flory} we provide sufficient criteria for tightness of trajectories associated with the cluster coagulation process, and criteria for limit points associated with the process to concentrate on solutions of this equation, proving, in particular, the existence of solutions.
    \item In Theorem~\ref{thm:uniqueness} we obtain a uniqueness result for this multi-type Flory equation  in the case when the kernel is `eventually conservative' (see Definition~\ref{cons_q}). Such a uniqueness result implies a \emph{law of large numbers} for the paths of the stochastic cluster coagulation process in Corollary~\ref{cor:LLN}.
    \item In Corollaries~\ref{cor:class-gel-sol} and~\ref{cor:gel-2}, we state, without proof, conditions for solutions of the multi-type Flory equation to have gelling solutions. These results rely on previous results concerning stochastic gelation from the aforementioned companion paper~\cite{AnIyMa24}. 
\end{enumerate}
 The notion of conserved quantities allows us to go beyond the setting of \cite{norris-coag-99}, and to prove existence of solutions for the limiting equation under weaker assumptions. The `eventually conservative' property is a natural extension of the \emph{eventual multiplicativity} property introduced in~\cite{norris-cluster-coag}. The approach we use, using weak-compactness, and martingale techniques, whilst well-established, allow for relatively few assumptions on the underlying space. We only assume that the clusters take values in a $\sigma$-compact metric space $E$. This means that our results encapsulate existing generalisations of the Marcus-Lushnikov model in the literature, including those of~\cite{heydecker2019bilinear, jacquot-10}. 
 
\subsection{Structure}
The rest of the paper is structured as follows. 
\begin{enumerate}
    \item In Section~\ref{intro-subsec-model} we introduce the model, terminology and global assumptions in Assumption~\ref{ass:global}.
    \item  In Section~\ref{sec:conc-trajectories} we include our main definitions, including the definition of a generalised Flory equation with a \emph{conserved quantity} in Definition~\ref{weak_sol}. This section also includes our main result, Theorem~\ref{spat_flory}, concerning the hydrodynamic limit of the process.
    \item  In Section~\ref{sec:unique} we state a uniqueness result (Theorem~\ref{thm:uniqueness}) for the solutions of the multi-type Flory equation in the case of \emph{eventually conservative} kernels.
    \item In Section~\ref{sec:existence_gelling} we draw connections with the related paper~\cite{AnIyMa24} on gelation for these models to state results concerning existence of gelling solutions for the multi-type Flory equation. As these results are an immediate consequence of results appearing in~\cite{AnIyMa24}, we omit explicit proofs. 
    \item Section~\ref{sec:main-results-proofs} is dedicated to proofs: the proof of Theorem~\ref{spat_flory} appearing in Section~\ref{sec:main}, and the proof of Theorem~\ref{thm:uniqueness} appearing in Section~\ref{sec:unique}.
\end{enumerate}

\subsection{The model, terminology and global assumptions} \label{intro-subsec-model}
%\subsection{Definition of the process} \label{intro-subsec-model}
\subsubsection{Definition of the cluster coagulation process}
 Recall that in the cluster coagulation process~\cite{norris-cluster-coag}, one begins with a configuration of \emph{clusters} in a measurable space $(E, \mathcal{B})$. Associated with a cluster $x \in E$ is a \emph{mass function} $m: E \rightarrow (0, \infty)$. Another important quantity associated with the process is a \emph{coagulation kernel} $K: E \times E \times \mathcal{B} \rightarrow [0, \infty)$, which satisfies the following:
\begin{enumerate}
    \item For all $A \in \mathcal{B}$ $(x,y) \mapsto K(x,y, A)$ is measurable,
    \item For all $x, y \in E$ $K(x,y, \cdot)$ is a measure on $E$,
    \item \emph{symmetric:} for all $A \in \mathcal{B}, x,y \in E$ $K(x,y, A) = K(y,x, A)$, 
    \item \emph{finite:} for all $x, y  \in E$ $\bar{K}(x,y) := K(x,y, E) < \infty$
    \item \emph{preserves mass:} for all $x, y \in E$, 
    $m(z) = m(x) + m(y)$ for $K(x,y, \cdot)$-a.a. $z \in E$.
\end{enumerate}

As a continuous time Markov chain, we associate the following dynamics. Suppose that we begin with a configuration of clusters. Then, 
\begin{itemize}
    \item to each pair of clusters $x, y \in E$, we associate an exponential random variable with parameter $\bar{K}(x,y)$;
    \item upon the elapsure of the next exponential random variable in the process, corresponding to the pair $x$ and $y$, say, the clusters $x$ and $y$ are removed and replaced by a new cluster $z \in E$, sampled according to the probability measure 
    \begin{equation}\label{measz}
    \frac{K(x, y, \cdot)}{\bar{K}(x, y)}.
    \end{equation}
\end{itemize}

\subsubsection{Notation and preliminaries} 

In this paper, we consider the process as depending on a parameter $N \in \mathbb{N}$, which one may consider as (up to random fluctuations) the total \emph{initial mass} of the system, and analyse the process as in the limiting regime as $N \to \infty$. 
%For each $N \in \mathbb{N}$, we define the process that depends on this parameter. %with \emph{total mass} $N$. 
%Let us 
We consider the configuration of clusters at time $t$ as being encoded by a \emph{random point measure} $\mathbf{L}^{(N)}_{t}$ on $E$, so that, for any set $A \subseteq E$, $a \in (0, \infty)$, $\mathbf{L}^{(N)}_{t}(A \cap m^{-1}([a, \infty)))$
denotes the random number of clusters of mass at least $a$ belonging to $A$. We denote by $\mathcal{M}_{+}(E)$ the set of finite, positive measures on $E$ (also defining $\mathcal{M}_{+}(E\times E)$ in a similar manner).  
%; here the set $m^{-1}([a,\infty))$ coincides with $\bigcup_{a\in \mathbb{N}_0}\{x\in E: m(x)=a\}$. 
We may then consider the process as a measure-valued Markov process, whose infinitesimal generator $\mathcal{A}$ is defined as follows: for any bounded measurable test function $F:\ME \rightarrow \mathbb{R}$, we have 
\begin{equation} \label{eq:gen-def}
\mathcal{A}F(\mathbf{L}_{t})=  \frac{1}{2}\int_{E\times E\times E} {\mathbf{L}_{t}}(\dd x) \left({\mathbf{L}_{t}} - \delta_{x}\right)(\dd y) K(x,y,\dd z) \left(F({\mathbf{L}}^{(x,y)\to z}_{t}) - F({\mathbf{L}_{t}})\right),
\end{equation}
where ${\mathbf{L}}^{(x,y)\to z} := {\mathbf{L}} + \left(\delta_{z} -\delta_{x} - \delta_{y} \right)$. Note that, as $\mathbf{L}^{(N)}_{t}$ is assumed to be a point measure, the above integral is always with respect to a positive measure. The measure ${\mathbf{L}}^{(x,y)}$ describes  the configuration of the system after a coagulation involving clusters the two clusters $x,y\in E$ and ending with one cluster $z$ with $m(z)= m(x) +m(y)$, for $K(x,y, \cdot)$-a.a. $z \in E$. Note the factor $\frac{1}{2}$ in front of the generator, present to ensure that the total rate at with clusters $x$ and $y$ interact is $\bar{K}(x,y)$ (and not $2\bar{K}(x,y)$).

In this paper, we assume $E$ is a $\sigma$-compact metric space %separable, complete, metric space, 
with metric $d$. Given another space $F$, denote by $C_{b}(E; F)$, or resp. $C_{c}(E; F)$, the spaces of continuous functions $E \rightarrow F$ which are bounded, or resp., have compact support. In general, we write $C_{b}(E)$ (resp. $C_{c}(E)$) as a shorthand for $C_{b}(E;\mathbb{R})$ (resp. $C_{c}(E;\mathbb{R})$). We equip $\mathcal{M}_{+}(E)$ with a metric $d$ that induces the vague topology on $\mathcal{M}_{+}(E)$. \footnote{We recall that the \emph{vague} (respectively \emph{weak}) topologies on $\mathcal{M}_{+}(E)$ are the smallest topologies that make the maps $\mu \rightarrow \int_{E} f(x) \mu(\dd x)$ continuous for all $f \in C_{c}(E)$ (respectively $C_{b}(E)$). Note that, since $E$ is separable and complete, $(\mathcal{M}_{+}(E), d)$ is a separable and complete space.}
For any $n\in\N$, we denote by $\mathcal{E}_n$ the space
\[
\mathcal{E}_n := \left\{\mathbf{u} \in \mathcal{M}_{+}(E): \int_{E}m(x)\, \mathbf{u}(\dd x) \leq n \right\}
\]
and $\mathcal{E}=\bigcup_{n\in\N}\mathcal{E}_n$. Note that on $\mathcal{E}_n$ \emph{weak} and \emph{vague} topology coincides, while this is not the case on $\mathcal{E}$. We generally will regard $\mathcal{E}$ as the state space of the process, taking values in $D([0,\infty);\mathcal{E})$, the \emph{Skorokhod space} of right-continuous functions $f: [0,\infty) \rightarrow \mathcal{E}$ with \emph{Skorokhod metric} $d_{S}$ induced by $d$.

Given a measure $\mu \in \mathcal{M}_{+}(E)$ and a measurable function $f: E \rightarrow \mathbb{R}$, we denote by 
\[
\|\mu\|:=\int_{E}\mu(\dd x) \quad \text{and} \quad 
\langle f, \mu \rangle := \int_{E} f(x) \mu(\dd x).\] 
At the level of the stochastic process, we denote by $\Prob[N]{\cdot}$ and $\E[N]{\cdot}$ probability distributions and expectations with regards to the trajectories of the process with generator $\mathcal{A}$ and (possibly random) initial condition $\bar{\mathbf{L}}^{(N)}_{0}$. 

\subsubsection{Normalisation, global assumptions and the multi-type Flory equation}
We expect, \emph{a priori} that limiting equation, called the \emph{multi-type Flory equation} (extending the Smoluchowski equation from~\cite{norris-cluster-coag}) encodes the behaviour of the process $(\mathbf{L}^{(N)}_{t/N}/N)_{t \geq 0}$, for $N$ `large'. The re-scaling of time is required to counter-balance the increase in the number of interactions as the initial mass of clusters grows with $N$. Thus, in general, we set
\[
\bar{\mathbf{L}}^{(N)}_{t} := \mathbf{L}^{(N)}_{t/N}/N, 
 \]
and generally (by abuse of notation, since such a limit may not be unique) use $(\bar{\mathbf{L}}^{*}_{t})_{t \geq 0}$ to denote a weak limit (a limit along a subsequence) of the process.  

 Additionally, the initial condition of the process must meet certain natural convergence assumptions. Here, we outline the primary assumptions required. 
 \begin{assumption} \label{ass:global}
    In this paper, we assume throughout that 
\begin{enumerate}
    \item \label{item:ass-global-1} $E$ is a $\sigma$-compact metric space,%a separable, complete metric space;
%    \item  The mass function $m: E \rightarrow \mathbb{R}$ is continuous.
 %   \item For any measurable set $A \in \mathcal{B}$, $K(\cdot, A): E \times E \rightarrow \mathbb{R}$ is continuous, 
    \item \label{item:ass-global-2} we have $\bar{\mathbf{L}}^{(N)}_{0} = \sum_{i \in I} \frac{c_{i}\delta_{i}}{N}$ for some finite set $I\subseteq E$, $c_{i} \in \mathbb{N}$, and there exists $c' > 0$ such that $\sum_{i \in I}\frac{c_{i}}{N} \leq c'$ almost surely,  
    \item \label{item:ass-global-3} there exists $\mu \in \mathcal{E}$ such that 
\begin{equation} \label{eq:limit-init-cond}
\bar{\mathbf{L}}^{(N)}_{0} \rightarrow \mu
\end{equation}
weakly, in probability, % $ \langle m,\bar{\mathbf{L}}^{(N)}_{0}\rangle \to \langle m, \mu\rangle$ in probability 
and $\langle m, \mu\rangle>0$. 
\end{enumerate}
\end{assumption}

An informal definition of the multi-type Flory equation is as follows (see Definitions~\ref{weak_sol} and~\ref{cons_q} on the next page for a more formal treatment). Included in this equation, is a function $\phi$, which one may regard as a \emph{conserved quantity} of the system. Then,  $(\mathbf{u}_t, )_{t\geq 0}$, taking values in  $\mathcal{M}_{+}(E)$ is a solution of the \emph{multi-type Flory equation} with \emph{conserved quantity} $\phi$, and initial condition $\mathbf{u}_{0}$ if: 
\begin{equation}\label{flory}
\mathbf{u}_t - \mathbf{u}_0 = \int_{0}^{t} \left[Q^{+}(\mathbf{u}_s) - Q^{-}(\mathbf{u}_s)\right] \dd s;
\end{equation}
where $Q^{+}(\mathbf{u}_s)$ and $Q^{-}(\mathbf{u}_s)$ are measures defined such that, for appropriate test functions ${J \in C_{c}(E; \mathbb{R})}$, 
\begin{linenomath}
\begin{align} \label{eq:Q+}
&\int_{E} J(y) Q^{+}(\mathbf{u}_s)(\dd y) := \frac{1}{2}\int_{E\times E \times E} J(z) K(x,y, \dd z) \mathbf{u}_s(\dd x ) \mathbf{u}_s(\dd y),
\end{align}
\end{linenomath}
and 
\begin{linenomath}
\begin{align} \label{eq:Q-}
&\int_{E} J(y) Q^{-}(\mathbf{u}_s)(\dd x) :=  \int_{E\times E} J(y) \bar{K}(x,y) \mathbf{u}_s(\dd x ) \mathbf{u}_s(\dd y )   + \int_{E} J(y) g_{\infty}(y) \mathbf{u}_s(\dd y);
\end{align}
\end{linenomath}
with $g_{\infty}$ defined such that 
\begin{equation}
g_{\infty}(y) := \int_{E} \phi(x,y) \mathbf{u}_0(\dd x)  - \int_{E} \phi(x, y) \mathbf{u}_s(\dd x).
\end{equation}

\section{Statements of main results}
\subsection{Concentration of trajectories on solutions of multi-type Flory equations}
\label{sec:conc-trajectories}

We adapt the definition from \cite{norris-coag-99, norris-cluster-coag} of \emph{solutions} for the generalised Flory equation to this setting. 
\begin{defn}\label{weak_sol} 
Given a function $\phi: E\times E \rightarrow \mathbb{R}$, we say a map $t\mapsto \mathbf{u}_t \in \ME,$ is a \emph{solution of the multi-type Flory equation} with \emph{conserved quantity} $\phi$ if the following are satisfied:
\begin{enumerate}
    \label{item:def-1-1} \item for all Borel sets $A\subseteq E$ the map
    $t\mapsto \mathbf{u}_t(A)\colon [0,\infty)\to [0,\infty]$
    is measurable;
    \label{item:def-1-2} \item for all $f\in C_c(E)$, and $t \geq 0$, we have $\langle f,\mathbf{u}_0\rangle<\infty$,
    %\item[iii)] For all $t \geq 0$ for all compact sets $B$ and all $t>0$ 
    \begin{linenomath}
    \begin{align} \label{eq:flory-part-two}
    &\int_0^t\int_{E\times E} f(y) \bar{K}(x,y)\mathbf{u}_s(\dd x)\mathbf{u}_s(\dd y)\dd s<\infty; \\ & \hspace{3cm} \text{and} \quad\int_0^t\int_{E\times E} f(y) \phi(x,y)\mathbf{u}_0(\dd x)\mathbf{u}_s(\dd y)\dd s<\infty;
    \end{align}
    \end{linenomath}
   %$\int_{E \times E}\phi'(x,y)\mu_t(\dd x)\mu_t(\dd y)\leq \int_{E\times E}\phi'(x,y)\mu_0(\dd x)\mu_0(\dd y);$
    \label{item:def-1-3} \item for all $f\in C_c(E)$ and $t\geq 0$, with $Q^{+}$ and $Q^{-}$ as defined in~\eqref{eq:Q+} and~\eqref{eq:Q-}, 
    \begin{equation} \label{eq:flory-test-function}
    \langle f, \mathbf{u}_t\rangle=\langle f, \mathbf{u}_0\rangle+\int_0^t\langle f, Q^{+}(\mathbf{u}_s) - Q^{-}(\mathbf{u}_s)\rangle \, \dd s.
    \end{equation}
    \label{item:def-1-4} \item For each $x \in E$, $t \geq 0$ we have 
    \begin{equation} \label{eq:annoying-cons-assumption}
        \int_{E} \phi(x,u) \mu_{t}(\dd u) \leq \int_{E} \phi(x,u) \mu_{0}(\dd u).
    \end{equation}
\end{enumerate}
Note that Item~\ref{item:def-1-2} ensures that the equation in Item~\ref{item:def-1-3} is well-defined (without terms of the form $\infty - \infty$).
\end{defn}

As alluded in the Definition~\ref{weak_sol}, an important feature of a multi-type Flory equation is a \emph{conserved quantity} $\phi$. This as a function $\phi: E \times E \rightarrow \mathbb{R}$ such that, for any $x$, the quantity $\langle \phi(\cdot, x), \bar{\mathbf{L}}^{(N)}_{t}\rangle$ is fixed for each $t > 0$. Perhaps the most natural conserved quantity in a coagulation process is \emph{mass}, which corresponds to the choice of function $\phi(x,y) = m(x)$; as masses add upon coagulation, this is fixed along trajectories of the process. However, one may imagine, in models encoding more information about clusters, that there are other quantities conserved, reflecting, for example, the \emph{centre of mass} of clusters in space, or the \emph{momentum} of the system. %(see examples in Section~\ref{example_CGP}).\
\begin{defn}[Conserved or sub-conserved quantities] \label{cons_q}
    A function $\phi: E \times E \rightarrow \mathbb{R}$ is said to be \emph{conservative} or a \emph{conserved quantity} if for all $x,y,q \in E$, for $K(x,y,\cdot)$ a.a. $z$,  
\begin{equation} \label{eq:good-kernels}
\phi\left(z, q\right) = \phi(x, q) + \phi(y,q).
\end{equation}
    It is, similarly, said to be \emph{sub-conservative} if for all $x,y,q \in E$, for $K(x,y,\cdot)$ a.a. $z$,  
\begin{equation} \label{eq:sub-cons-kernels}
\phi\left(z, q\right) \leq \phi(x, q) + \phi(y,q).
\end{equation}
It is said to be \emph{doubly conservative} (similarly \emph{doubly sub-conservative}), if it is conservative in the second argument in addition to the first, so that in addition for all $x,y,q \in E$, for $K(x,y,\cdot)$ a.a. $z$,
\begin{equation} \label{eq:good-kernels-2}
\phi\left(q, z\right) = \phi(q, x) + \phi(q, y).
\end{equation}
Finally, if a function $\xi: E \rightarrow \mathbb{R}$ is such that $\phi(x,y) = \xi(x)$ is conservative (resp. sub-conservative), we also say $\xi$ is conservative (resp. sub-conservative). 
\end{defn}

\begin{examples}
Two natural examples of conserved quantities are $\phi \equiv 0$, and $\phi(x, y) = m(x) \ell(y)$ for some measurable function $\ell: E \rightarrow \mathbb{R}_{+}$. In the prior case, the associated multi-type Flory equation corresponds to the generalised Smoluchowski equation introduced in~\cite[Section~2]{norris-cluster-coag}, whilst the latter case corresponds to the Flory equation (called the `modified Smoluchowski equation' in~\cite{norris-cluster-coag}). 
\end{examples}

\begin{examples} \label{rem:doubl-cons}
    If $\phi$ is symmetric and conservative, it is also doubly conservative. Thus, some examples of doubly conservative functions include $\phi(x,y) = m(x)m(y)$. Another example comes from the \emph{bilinear coagulation process}, studied in~\cite{heydecker2019bilinear}, where  $E = [0, \infty)^{d}$, $A \in [0, \infty)^{d \times d}$ is a symmetric matrix with non-negative entries and $K(x, y, \dd z) = (x^{T} A y) \delta_{x+ y}$. In this case, for any matrix $M$ the function $\phi(x,y)=x^{T} M y$ is a doubly conservative function. This means that $\bar{K} = x^{T} A y$ is itself is a doubly conservative function. 
\end{examples}

\begin{rmq}
        Note that, if $\xi':[0, \infty) \rightarrow [0, \infty)$ is continuous and sub-additive, the function $\phi'(x,y) = (\xi'(m(x)))(\xi'(m(y))) $ is doubly sub-conservative. This is the analogue of `sublinear' function used by Norris in~\cite{norris-cluster-coag}.
\end{rmq}

%Let us know state our main assumptions, which relate to the concept of conserved quantity.
Our main assumptions are as follows. The first, Assumption~\ref{ass:tightness} ensures that the probability measures $(\mathbb{P}_{N})_{N \in \mathbb{N}}$ on the space $D([0, \infty); \mathcal{E})$ induced by the processes $(\bar{\mathbf{L}}^{(N)}_t)_{t \in [0, \infty)}$ are tight; and thus, by Prokhorov's theorem, the collection of random trajectories $\left\{(\bar{\mathbf{L}}^{(N)}_t)_{t \in [0, \infty)},  N \in \mathbb{N}\right\}$  contains weakly convergent subsequences. 

\begin{assumption}[Conditions for tightness] \label{ass:tightness}
Assume the following:
    \begin{enumerate}
        \item \label{item:ass-doubl-cons} There exists a doubly sub-conservative $\phi'$ such that $\bar{K} \leq \phi'$ pointwise.
\item \label{item:ass-limsup-cond} We have %There exists a constant $c_0 > 0$ such that
\begin{linenomath}
\begin{align} \label{eq:conv-cons-prob}
\limsup_{N \to \infty} \E[N]{\int_{E\times E} \bar{\mathbf{L}}^{(N)}_{0} (\dd x )\left(\bar{\mathbf{L}}^{(N)}_{0} (\dd y) -\frac{\delta_x}N\right)\phi'(x,y)} < \infty.
\end{align}
\end{linenomath} 
\item \label{item:ass-doubl-cons-compact} There exists a doubly sub-conservative function $\phi'': E \times E \rightarrow [0, \infty)$, such that for all $n \in \mathbb{N}$ the set
\begin{equation} \label{eq:bounded-projection}
\mathcal{E}^{*}_{n} := \left\{\mathbf{u} \in \mathcal{M}_{+}(E \times E) : \int_{E} \mathbf{u}(\dd x \times \dd y ) \phi''(x, y) \leq n \right\}
\end{equation}
is compact, and $\phi''$ satisfies~\eqref{eq:conv-cons-prob}.
    \end{enumerate}
\end{assumption}
Our next assumption, Assumption~\ref{ass:conv-to-flory} (which contains Assumption~\ref{ass:tightness}), are criteria under which limits of sub-sequences of the collection $\left\{(\bar{\mathbf{L}}^{(N)}_t)_{t \in [0, \infty)},  N \in \mathbb{N}\right\}$ are concentrated on trajectories that solve the multi-type Flory equation. In particular, Item~\ref{item:bounded_compact} from Assumption~\ref{ass:conv-to-flory} intuitively states that the coagulation kernel $\bar{K}$ is more and more comparable to a conservative function $\phi$ outside larger and larger compact sets. 
\begin{assumption}[Conditions ensuring concentration of trajectories] \label{ass:conv-to-flory}
Assume that the following hold.
\begin{enumerate}
 \item \label{item:continuity} Assumption~\ref{ass:tightness} is satisfied and the functions $\bar{K}$ and $\phi'$ are continuous.
   \item \label{item:bounded_compact} There exists a continuous, conservative function $\phi$ satisfying~\eqref{eq:conv-cons-prob}, such that one of the following hold: 
   \begin{enumerate}
   \item  For an increasing collection of sets $(C_{k})_{k \in \mathbb{N}} \subseteq E$, with $\bigcap_{k \in \mathbb{N}} \overline{C^{c}_{k}} = \varnothing$ we have, for any compact $C' \subseteq E$
    \begin{linenomath}
        \begin{align} \label{bounded-outside-compact}
        & \limsup_{k \to \infty} \sup_{x \in C^{c}_{k}, \, y \in C'}  \left|\bar{K}(x,y)- \phi(x,y) \right| < \infty.
        \end{align}
    \end{linenomath}
    \item     There exists a continuous doubly sub-conservative function $\phi^{*}$ satisfying Equation~\eqref{eq:conv-cons-prob}, such that, for a collection of sets $(C_{k})_{k \in \mathbb{N}} \subseteq E$, we have, for any compact $C' \subseteq E$
    \begin{linenomath}
        \begin{align} \label{eq:lim-cons-kern}
        & \lim_{k \to \infty} \sup_{x \in C^{c}_{k}, \, y \in C'} \frac{\left|\bar{K}(x,y)- \phi(x,y) \right|}{\phi^{*}(x,y)} = 0.
        \end{align}
    \end{linenomath}
\end{enumerate}
   \item  \label{item:init_cond} The limiting initial condition $\mu$ from~\eqref{eq:limit-init-cond} is such that for any compact set $C' \subseteq E$ 
\begin{linenomath}
\begin{align} \label{eq:conv-probab} 
& \lim_{N \to \infty} \sup_{y \in C'} \bigg| \int_{E} \bar{\mathbf{L}}^{(N)}_{0}(\dd x) \phi(x,y) - \int_{E} \mu(\dd x)  \phi(x,y) \bigg| = 0 \quad \text{almost surely.}
\end{align}
\end{linenomath}
\end{enumerate}
\end{assumption}
The main result of this section is the following.
\begin{thm}\label{spat_flory}
Let $\left\{(\bar{\mathbf{L}}^{(N)}_t)_{t \in [0, \infty)},  N \in \mathbb{N}\right\}$ be a sequence of cluster coagulation processes. Then, the following holds:
\begin{enumerate}
\item \label{item:tightness} If Assumption~\ref{ass:tightness} is satisfied, the sequence $\left\{(\bar{\mathbf{L}}^{(N)}_t)_{t \in [0, \infty)},  N \in \mathbb{N}\right\}$ contains converging sub-sequences;
\item  \label{item:conv-to-flory}  If Assumption~\ref{ass:conv-to-flory} is satisfied, the limit $\bar{\mathbf{L}}^{*}$ of any weakly convergent subsequence is, almost surely, a solution of the multi-type Flory equation with conserved quantity $\phi$ and initial condition $\mu$ (according to Definition \ref{weak_sol}).\\
\end{enumerate} 
\end{thm}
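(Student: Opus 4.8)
The plan is to treat the two items with the standard weak-compactness and martingale machinery, the only genuinely new ingredient being the way the Flory correction $g_\infty$ emerges from the exact conservation of $\phi$ in the finite particle system.

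\textbf{Part 1 (tightness).} I would verify Jakubowski's criterion on $D([0,\infty);\ME)$ equipped with the vague topology, for which it suffices to establish (i) a compact containment condition and (ii) tightness of the real-valued projections $t\mapsto\langle f,\bar{\mathbf{L}}^{(N)}_t\rangle$ for $f$ ranging over the separating class $C_c(E)$. For (i) I would use $\phi''$ as a Lyapunov functional: writing the Dynkin formula for the functional $\mathbf{u}\mapsto\int_{E\times E}\phi''(x,y)\,\mathbf{u}(\dd x)\mathbf{u}(\dd y)$ applied to $\bar{\mathbf{L}}^{(N)}$ and using that $\phi''$ is doubly sub-conservative, the drift is nonpositive up to an $O(1/N)$ diagonal correction, so the functional is an approximate supermartingale; bounding its initial expectation by \eqref{eq:conv-cons-prob} and applying Doob's maximal inequality controls $\sup_{s\le T}\langle\phi'',\bar{\mathbf{L}}^{(N)}_s\otimes\bar{\mathbf{L}}^{(N)}_s\rangle$ in probability uniformly in $N$, and compactness of the sublevel sets $\mathcal{E}^{*}_n$ then yields compact containment (after projecting to the marginal). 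For (ii) I would use the Dynkin decomposition $\langle f,\bar{\mathbf{L}}^{(N)}_t\rangle=\langle f,\bar{\mathbf{L}}^{(N)}_0\rangle+\int_0^t\bar{\mathcal{A}}^{(N)}\langle f,\cdot\rangle(\bar{\mathbf{L}}^{(N)}_s)\,\dd s + M^{(N),f}_t$, bound the finite-variation part in expectation using $\bar K\le\phi'$ and the $\phi'$-moment bound \eqref{eq:conv-cons-prob}, and show the predictable quadratic variation of $M^{(N),f}$ is $O(1/N)$; the Aldous--Rebolledo criterion then gives tightness of the projections.

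\textbf{Part 2 (identification of the limit).} Passing to a weakly convergent subsequence and invoking Skorokhod's representation theorem, I may assume $\bar{\mathbf{L}}^{(N)}\to\bar{\mathbf{L}}^{*}$ almost surely. Computing the generator on $\langle f,\cdot\rangle$ for $f\in C_c(E)$ gives
\[
\bar{\mathcal{A}}^{(N)}\langle f,\cdot\rangle(\bar{\mathbf{L}})=\tfrac12\int_{E^3}\bar{\mathbf{L}}(\dd x)\Big(\bar{\mathbf{L}}-\tfrac{\delta_x}{N}\Big)(\dd y)K(x,y,\dd z)\big(f(z)-f(x)-f(y)\big),
\]
which splits into a gain and a loss term. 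For the gain term, mass conservation $m(z)=m(x)+m(y)$ together with compact support of $f$ confines $(x,y)$ to $\{m(x)+m(y)\le R\}$; using continuity of $\bar K$ and a truncation/uniform-integrability argument I would pass to the limit to obtain $\langle f, Q^{+}(\mathbf{u}_s)\rangle$. For the loss term I would decompose $\bar K=\phi+(\bar K-\phi)$: the remainder $\bar K-\phi$ is bounded outside compacts, or dominated by $\phi^{*}$, so by weak convergence and the moment control it converges to $\int f(y)(\bar K-\phi)(x,y)\mathbf{u}_s(\dd x)\mathbf{u}_s(\dd y)$.

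\textbf{Emergence of $g_\infty$ (the crux).} The decisive point is the $\phi$-part of the loss term. Because $\phi$ is \emph{conservative}, for each fixed $y$ the quantity $\langle\phi(\cdot,y),\bar{\mathbf{L}}^{(N)}_s\rangle$ is exactly constant in $s$ along trajectories of the finite process, hence equals $\langle\phi(\cdot,y),\bar{\mathbf{L}}^{(N)}_0\rangle$; by \eqref{eq:conv-probab} this converges to $\int\phi(x,y)\mu(\dd x)$ uniformly for $y$ in the compact support of $f$. Thus $\int f(y)\phi(x,y)\bar{\mathbf{L}}^{(N)}_s(\dd x)\bar{\mathbf{L}}^{(N)}_s(\dd y)=\int f(y)\langle\phi(\cdot,y),\bar{\mathbf{L}}^{(N)}_0\rangle\bar{\mathbf{L}}^{(N)}_s(\dd y)$ converges to $\int f(y)\big(\int\phi(x,y)\mathbf{u}_0(\dd x)\big)\mathbf{u}_s(\dd y)$, and subtracting and adding $\int\phi(x,y)\mathbf{u}_s(\dd x)$ produces exactly the correction $\int f(y)g_\infty(y)\mathbf{u}_s(\dd y)$; note that vague convergence alone does \emph{not} give $\langle\phi(\cdot,y),\bar{\mathbf{L}}^{*}_s\rangle=\int\phi(x,y)\mu(\dd x)$, the gap, caused by $\phi$-content escaping to the gel, being precisely $g_\infty$. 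Summing the two parts yields $\langle f,Q^{-}(\mathbf{u}_s)\rangle$. Finally I would let the martingale term vanish in $L^2$, justify interchanging the limit with the time integral using the uniform moment bounds, and thereby obtain \eqref{eq:flory-test-function}; the measurability and finiteness requirements and the inequality \eqref{eq:annoying-cons-assumption}, which records that $\phi$-content can only decrease in the limit, follow from the same moment bounds and conservation. The main obstacle throughout is the loss term of Part 2: controlling the non-compactly-supported kernel uniformly in $N$ and rigorously producing $g_\infty$ from the conservation law, which is where all the structural hypotheses of Assumption~\ref{ass:conv-to-flory} are consumed.
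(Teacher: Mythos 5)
Your proposal follows the paper's route in all essentials: tightness via Jakubowski's criterion driven by doubly sub-conservative pair functionals (your Aldous--Rebolledo step for the real-valued projections is interchangeable with the paper's Ethier--Kurtz modulus-of-continuity plus Doob computation), then Skorokhod representation, a gain/loss splitting of the generator, truncation of $\bar{K}-\phi$ using the two alternative hypotheses of Assumption~\ref{ass:conv-to-flory}, and --- the crux, which you identify exactly as the paper does --- the fact that $\langle\phi(\cdot,y),\bar{\mathbf{L}}^{(N)}_s\rangle$ is constant in $s$, so that \eqref{eq:conv-probab} turns the $\phi$-part of the loss term into a functional of $\bar{\mathbf{L}}^{(N)}_0$ whose limit, after adding and subtracting $\int_E\phi(x,y)\bar{\mathbf{L}}^{*}_s(\dd x)$, is precisely the $g_\infty$-correction; this is the paper's decomposition $G^{-}_N=\hat{G}_N+H-\mathcal{E}_N$ in the proof of Claim~\ref{clm:compact-test-equation}.

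Two steps do not close as literally written, though both are repaired by arguments the paper actually carries out. First, compact containment: you track the functional $\mathbf{u}\mapsto\int_{E\times E}\phi''(x,y)\,\mathbf{u}(\dd x)\mathbf{u}(\dd y)$ and propose to bound its initial expectation by \eqref{eq:conv-cons-prob}. But \eqref{eq:conv-cons-prob} controls only the pair moment with the diagonal removed, i.e. against $\bar{\mathbf{L}}^{(N)}_{0}(\dd x)\left(\bar{\mathbf{L}}^{(N)}_{0}(\dd y)-\frac{\delta_x}{N}\right)$; the extra diagonal piece $\frac1N\int_E\phi''(x,x)\,\bar{\mathbf{L}}^{(N)}_0(\dd x)$ need not be finite under the stated assumptions --- this is exactly the point of the paper's remark that the $\delta_x/N$ term is crucial, e.g. when $\langle \xi'\circ m,\mu\rangle<\infty$ but the corresponding second moment is infinite. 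The paper works with the diagonal-removed functional throughout, and for that functional no supermartingale or Doob argument is needed: Lemma~\ref{lem:conservation} shows it is non-increasing \emph{pathwise}, every coagulation event decreasing it, so containment in the compact sets of \eqref{eq:bounded-projection} holds deterministically for all time once it holds at time zero, and only Markov's inequality on the initial data enters. Second, fixed-time marginals: almost-sure Skorokhod convergence yields $\bar{\mathbf{L}}^{(N)}_t\to\bar{\mathbf{L}}^{*}_t$ only at continuity points of the limit trajectory, and your limit passage in the bilinear terms additionally requires weak convergence of the product measures with diagonal removed, \eqref{eq:weak-conv-without-diagonal}. The paper devotes Proposition~\ref{prop:weak-conv-margin} to precisely this, first using the modulus-of-continuity bound \eqref{eq:tight-goal} to show that $\bar{\mathbf{L}}^{*}$ is almost surely a continuous trajectory; your proof needs that intermediate step spelled out rather than taken for granted.
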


\begin{rmq} \label{rem:norris-uniqueness}
 If we have uniqueness of the solution of the associated Flory equation, then Theorem~\ref{spat_flory} also implies a weak law of large numbers for the cluster coagulation process, (since weak convergence to a constant implies convergence in probability). This has been shown by Norris~\cite{norris-cluster-coag} in the cases that either $\bar{K}(x,y) \leq m(x) + m(y)$, $\bar{K}(x,y) = \xi'(m(x))\xi'(m(y))$ where $\xi':[0, \infty) \rightarrow [0, \infty)$ is continuous, sub-linear and $(\xi')^2$ is sub-linear, or if the kernel is `eventually multiplicative' (meaning that, for some $R > 0$, $\bar{K}(x,y) = m(x) m(y)$ on $(m^{-1}([0,R]) \times m^{-1}([0,R]))^{c}$; see~\cite[Theorem~2.3]{norris-cluster-coag}).
    Notably, Norris also shows an exponential rate of convergence of the coagulation process to the limit, in a the restricted case of `polymer models' when the limit is unique (see~\cite[Theorem~4.2 and Theorem~4.3]{norris-cluster-coag}). 
We note, however, that the setting we consider here is more general, and there are a wide range of kernels satisfying Equation~\eqref{eq:lim-cons-kern} which are not eventually multiplicative. The natural extension of this result is to %focus on uniqueness of the solutions of the multi-type Flory equation with conserved quantity $\phi$. We limit ourselves to 
consider kernels that are, in some sense, ``eventually conservative'' (c.f. Theorem~\ref{thm:uniqueness}).
\end{rmq}

\begin{example}
Some particular instances where Theorem~\ref{spat_flory} applies, are listed in the following.
\begin{enumerate}
    \item We can take $\phi' = \phi'' = \phi^{*}$ in Theorem~\ref{spat_flory} and Lemma~\ref{lem:tight}, with $\phi'$ satisfying~\eqref{eq:conv-cons-prob}. In this case, if there exist a nested sequence of compact sets $(C_{k})_{k \in \mathbb{N}}$ such that $\bigcup_{k \in \mathbb{N}} C_{k} = E$ and $\phi'(x,y) > k$ on $(C_{k} \times C_{k})^{c}$, one readily verifies the compactness in~\eqref{eq:bounded-projection}, by Prokhorov's theorem and Markov's inequality. Often, these sets $(C_{k})_{k \in \mathbb{N}}$ can also be used in~\eqref{bounded-outside-compact} or~\eqref{eq:lim-cons-kern}.
    \begin{enumerate}
    \item For example, assuming continuity of $\bar{K}$ we can take 
    \begin{equation} \label{eq:ex-sublinear-mass}
    \phi'(x,y) = \xi'(m(x)) \xi'(m(y))
    \end{equation}
    for some continuous, sub-linear, positive function $\xi'$, with $\lim_{k \to \infty} \xi'(k) = \infty$, and, assuming $m^{-1}([0,k])$ is compact, choose $C_{k} = m^{-1}([0,k])$. This is a common assumption in the setting of hydrodynamic limits converging to the Smoluchowski equation for example in~\cite[Equation~(2.4)]{norris-cluster-coag}. 
    \item In a similar setting, we can choose $\phi'(x,y) = m(x) \wedge m(y)$, in which case~\eqref{bounded-outside-compact} is satisfied with $\phi \equiv 0$. It may the case that 
    \[
    \int_{E \times E} m(x) \wedge m(y) \mu(\dd x) \mu(\dd y) < \infty, 
    \]
    but $\pairing{m, \mu} = \infty$; and, as far as we are aware, this case is not covered by previous results appearing in the literature. This offers an alternative to the assumption~\cite[Equation~(2.4)]{norris-cluster-coag} in proving convergence to solutions of the Smoluchowski equation. In a similar vein, we can choose $\phi'(x,y) = \xi'(m(x)\wedge m(y))$, for a continuous, sub-linear, positive function $\xi'$. 
    \item In the setting of the \emph{bilinear coagulation process} (see Examples~\ref{rem:doubl-cons}), we can take $\phi = \phi' = \bar{K}(x,y) = x^{T}A y$. If, for example, the matrix $A$ has non-zero entries, the sets $\{x, y \in [0, \infty)^{d}: x^{T} A y \leq k\}$ are compact, and so Equation~\eqref{eq:bounded-projection} is satisfied. 
    \end{enumerate}
    \item Alternatively, we may choose $\phi(x,y)=m(x)\ell(y)$ for some measurable function $\ell:E\to \R_+$, $\phi^*(x,y)=m(x)$, and $\phi'(x,y) = \xi'(m(x)) \xi'(m(y))$, and $\lim_{N \to \infty} \massminus{\bar{\mathbf{L}}^{(N)}_{0}} = \massminus{\mu}$ almost surely (which may be used to show~\eqref{eq:conv-probab}).  In the setting of the classical Marcus-Lushnikov process, this example includes
     a mild strengthening of~\cite[Theorem~2.3]{fournier-giet-04}, showing concentration of trajectories around the classical Flory equation. However, we allow for random initial conditions $(\bar{\mathbf{L}}^{(N)}_{0})_{N \in \mathbb{N}}$, and do not require $\xi'(x) \geq 1$.
    %    we see the most natural generalisation of the classical Flory equation to this setting, where the influence of the \emph{gel} on the other particles is seen only via its total mass, see Remark~\ref{rem:flory_gen} and Example~\ref{ex2}.
\end{enumerate}
\end{example}

%\begin{example}   A simple case in which point 1., 2. and 3. of Assumption~\ref{Ass_main} are satisfied is when $\phi'(x, y) = \phi''(x,y) = m(x) m(y)$. In this case, if $m$ is continuous, an argument applying Markov's inequality and Prokhorov's theorem shows that~\eqref{eq:bounded-projection} is satisfied if the sets $\{x: m(x) \leq k\}$ are compact.  Alternatively, in the setting of  Example~\ref{ex:bilinear} (and Remark~\ref{rem:doubl-cons}), bilinear kernels can be made to satisfy the second assumption, %since, if $\mathbbm{1}$ denotes the vector of ones,  by continuity,   if, for example, the matrix $A$ has non-zero entries, since the sets $\{x, y \in [0, \infty)^{d}: x^{T} A y \leq k\}$ are compact. \end{example}

%\begin{example}\label{ex1}
%In~\cite[Example~3.3]{AnIyMa24}, a number of toy spatial coagulation models are introduced. Following the notation introduced there, fix $\alpha\in(0,1)$ and take $\bar{K}((p,n), (s, o)) :=(no)^{\alpha} f(||p-s||)$, where $||\cdot||$ denotes the Euclidean norm and $f: \mathbb{R}^{+}\mapsto \mathbb{R}^{+}$ is a continuous function such that %$f(0)<\infty$. 
%$f$ is uniformly bounded. Then we have that 
%\[\lim_{n\to\infty} \frac{\bar{K}((p,n),(s,o))}{n} =0\] 
%for any $p,s\in\Scal$ and $o\in\N_0$, and the function $\phi \equiv 0$ satisfies the properties in \eqref{eq:lim-cons-kern} and \eqref{eq:good-kernels}. 
%\end{example}

\begin{example}\label{ex2}
In~\cite[Example~3.3]{AnIyMa24}, a number of toy spatial coagulation models are introduced. Following the notation introduced there,
 %with location function given by the center of mass, namely $\bar{K}((p,n), (s, o), \cdot) = \delta_{\frac{np + os}{n+o}, n+o}$. 
suppose that
\begin{equation}\label{lim}
\lim_{n\to\infty} \frac{\bar{K}((p,n), (s,o))}{n} =\ell(s,o) <\infty,
\end{equation}
where $\ell$ is a continuous function that does not depend on the position $p$ of the mass going to infinity. Then,
the function $\phi:= n\ell(s,o)$ satisfies the properties in \eqref{eq:lim-cons-kern} and \eqref{eq:good-kernels} for any choice of the location of the newly formed particle, which we indicate by $X((p, m), (s,n))$. Indeed \eqref{lim} implies condition \eqref{eq:lim-cons-kern}, and we can also check that also \eqref{eq:good-kernels} is satisfied: 
\begin{linenomath}
\begin{align*}
\phi\left(\left(X((p, m), (s,n)), m + n \right), (v, j)\right) &= (m+n)\ell(v,j) = \phi((p, m)), (v, j)) + \phi((s,n), (v,j)).
\end{align*}
Thus, in this context, we can apply Theorem~\ref{spat_flory} to deduce a spatial analogue of the convergence to the Flory equation in~~\cite[Theorem~2.3]{fournier-giet-04}.
\end{linenomath}
%\lu{Comment graph comparable.}
\end{example}

\begin{rmq}
        If one considers the initial condition $\bar{\mathbf{L}}^{(N)}_{0} := \frac{\sum_{i=1}^{N} \delta_{X_{i}}}{N}$, where $X_{i}$ are i.i.d samples from the limiting measure $\mu$, if we choose $\phi'$ according to~\eqref{eq:ex-sublinear-mass}, by applying the strong law of large numbers, one can readily verify that Equations~\eqref{eq:conv-cons-prob} is satisfied when $\langle \xi' \circ m, \mu \rangle < \infty$. 
%    \eqref{eq:sublinear-moment-1},~\eqref{eq:sublinear-moment-2} or Equation~\eqref{eq:sublinear-moment-2+} and the condition $\lim_{N \to \infty} \massminus{\bar{\mathbf{L}}^{(N)}_{0}} = \massminus{\mu}$ are respectively satisfied under the conditions $\langle \xi' \circ m, \mu \rangle < \infty$, $\langle \xi'' \circ m, \mu \rangle < \infty$ or $\langle m, \mu \rangle < \infty$. 
In this case, the term $\delta_{x}/N$ appearing in Equation~\eqref{eq:conv-cons-prob} 
%\eqref{eq:sublinear-moment-1},~\eqref{eq:sublinear-moment-2} and~\eqref{eq:sublinear-moment-2+} are 
is crucial for this argument to work, since it may be the case, for example, that $\langle \xi' \circ m \rangle < \infty$, %\mu \rangle < \infty$ 
but $\langle (\xi' \circ m)^2 \rangle = \infty$. %, \mu \rangle = \infty$. 
\end{rmq}

\subsection{Sufficient condition for uniqueness: eventually conservative kernels}
\label{sec:unique}
Uniqueness of solutions of a multi-type Flory equation is a rather delicate issue even for the particular case of the classical Smoluchowski equation (see~\cite{norris-coag-99} for an example of kernel which gives non-unique solutions). In the following theorem we focus on certain kernels $\bar{K}(x,y)$ which coincide with a conservative function outside a compact set. In analogy with~\cite[Theorem 2.3]{norris-cluster-coag} we call these kernels `eventually conservative' and we prove that this condition is sufficient to ensure uniqueness of solutions. 
\begin{thm} \label{thm:uniqueness}
    Suppose that $(\mu_{t})_{t \geq 0}$ is a solution to the multi-type Flory equation (according to Definition \ref{weak_sol}) with conserved quantity $\phi$ and initial condition $\mu_{0}$ such that $\|\mu_0\| < \infty$. Also, assume that, for each $k \in \mathbb{N}$ sufficiently large, the set 
\begin{equation}\label{D}
D_{k} := \left\{x \in E: \int_{E} \phi(x, y) \mu_{0}(\dd y) \leq k \right\}
\end{equation}
is compact, with $\bigcup_{k \in \mathbb{N}} D_{k} = E$;  we have $\bar{K}(x,y) \leq c' \phi(x,y)$ for some $c' > 0$ and for some $R$ sufficiently large,
$\phi(x,y) = \bar{K}(x,y)$ on $(D_{R} \times D_{R})^{c}$. Then, the solution $(\mu_{t})_{t \geq 0}$ is unique. 
\end{thm}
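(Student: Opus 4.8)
The plan is to prove uniqueness through a Gronwall estimate for a weighted total-variation distance between two solutions, in which the conserved quantity supplies the only available a priori moment bound and the ``eventually conservative'' hypothesis localises the unbounded part of the kernel to the compact core $D_{R}\times D_{R}$. Throughout set $\psi(x):=\int_{E}\phi(x,y)\mu_{0}(\dd y)$, so that $D_{k}=\{\psi\le k\}$ by~\eqref{D}, and let $\varphi:=1+\psi$. The single structural fact driving everything is that, since $\phi$ is conservative~\eqref{eq:good-kernels}, the function $\psi$ (and more generally each $\phi(\cdot,q)$) is \emph{additive} along coagulations, $\psi(z)=\psi(x)+\psi(y)$ for $K(x,y,\cdot)$-a.a.\ $z$. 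Consequently the coagulation part of the operator in~\eqref{eq:Q+}--\eqref{eq:Q-} annihilates $\psi$, a cancellation I will use repeatedly.

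\emph{A priori bounds.} Let $(\mu_{t})$ and $(\nu_{t})$ be two solutions with the common initial datum $\mu_{0}$, $\|\mu_{0}\|<\infty$. Testing~\eqref{eq:flory-test-function} against compactly supported approximations of the constant $1$, the right-hand side equals $-\frac12\int\bar K\,\mu_{s}\mu_{s}-\int g_{\infty}\mu_{s}\le 0$, so the total mass is nonincreasing, $\|\mu_{t}\|\le\|\mu_{0}\|$. Testing instead against $\psi$: the coagulation contribution vanishes by additivity, leaving only the gel term $-\int_{0}^{t}\int_{E}\psi(y)g_{\infty}(y)\mu_{s}(\dd y)\,\dd s$, which is nonpositive because the flux to the gel is nonnegative, $g_{\infty}\ge 0$ (cf.~\eqref{eq:annoying-cons-assumption}). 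Hence $\langle\psi,\mu_{t}\rangle\le\langle\psi,\mu_{0}\rangle=:m_{1}$, finite under the hypotheses, and likewise for $\nu_{t}$. In particular $\mu_{t}(D_{k}^{c})\le m_{1}/k$ uniformly in $t$: the compactness of the sublevel sets $D_{k}$ converts the first-moment bound into a genuine tail estimate, and this is the mechanism that controls the region where the kernel is unbounded.

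\emph{The difference equation.} Write $D_{s}:=\mu_{s}-\nu_{s}$. Expanding $\mu_{s}\otimes\mu_{s}-\nu_{s}\otimes\nu_{s}=D_{s}\otimes\mu_{s}+\nu_{s}\otimes D_{s}$ and $g_{\infty}^{\mu}\mu_{s}-g_{\infty}^{\nu}\nu_{s}=g_{\infty}^{\mu}D_{s}-(\int_{E}\phi(x,y)D_{s}(\dd x))\nu_{s}(\dd y)$, every term becomes \emph{linear} in $D_{s}$, and for $f\in C_{c}(E)$ one obtains $\langle f,D_{t}\rangle=\int_{0}^{t}[\tfrac12\int_{E\times E}G_{f}(x,y)\,D_{s}(\dd x)(\mu_{s}+\nu_{s})(\dd y)-\mathrm{Gel}_{f}(s)]\,\dd s$, where $G_{f}(x,y):=\int_{E}(f(z)-f(x)-f(y))K(x,y,\dd z)$ and $\mathrm{Gel}_{f}(s)$ collects the two linear gel terms. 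Taking the supremum over $|f|\le\varphi$ defines $\|D_{t}\|_{\varphi}$, and the goal is the linear bound $\|D_{t}\|_{\varphi}\le C\int_{0}^{t}\|D_{s}\|_{\varphi}\,\dd s$, which forces $D\equiv 0$ by Gronwall. I would split both $E\times E$ and the gel integral according to $E\times E=(D_{R}\times D_{R})\sqcup(D_{R}\times D_{R})^{c}$: on the compact box $\bar K\le c'\phi$ is bounded (by continuity of $\bar K$ on the compact $D_{R}\times D_{R}$), giving a clean Lipschitz contribution $\le C_{R}\|\mu_{0}\|\,\|D_{s}\|_{\varphi}$; on the complement $\bar K=\phi$ is conservative, so additivity of $\psi$ produces partial cancellations in $G_{\varphi}$.

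\emph{The main obstacle.} The genuine difficulty is that the crude bound $|G_{f}(x,y)|\le 2(\varphi(x)+\varphi(y))\bar K(x,y)$, combined with $\int_{E}\bar K(x,y)(\mu_{s}+\nu_{s})(\dd y)\le 2c'\psi(x)$ (again~\eqref{eq:annoying-cons-assumption}), yields a term weighted by $\varphi\psi\approx\psi^{2}$ rather than by $\varphi$; no uniform second moment is available---indeed $\langle\psi^{2},\mu_{t}\rangle$ may diverge, which is precisely gelation---so the estimate does not close by weight alone. I expect this to be the crux. To defeat it I would truncate at level $k$ via $D_{k}$: on $D_{k}$ one has $\psi\le k$, whence $\int_{D_{k}}\varphi\psi\,\dd|D_{s}|\le k\,\|D_{s}\|_{\varphi}$, while on $D_{k}^{c}$ the eventually-conservative structure forces the conservative kernel, whose gain--loss, tested against $\varphi$-bounded functions, cancels to leading order by additivity of $\psi$, leaving a remainder governed by the tail $\mu_{s}(D_{k}^{c})+\nu_{s}(D_{k}^{c})\le 2m_{1}/k$. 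The gel term is handled identically, its bad part being $\le\int\varphi\,g_{\infty}^{\mu}\,\dd|D_{s}|\le\int\varphi\psi\,\dd|D_{s}|$ and its remaining part estimated by Fubini against the finite measure $\nu_{s}$ using $\int_{E}\phi(x,y)\nu_{s}(\dd x)\le\psi(y)$. Assembling these and running the argument with the bounded truncated weight $\varphi_{k}:=1+(\psi\wedge k)$, so that the tail remainder is itself proportional to $\|D_{s}\|_{\varphi_{k}}$, converts the bound into a bona fide linear Gronwall inequality, giving $\|D_{t}\|_{\varphi_{k}}\equiv 0$ for every $k$ and hence $\mu_{t}=\nu_{t}$ for all $t$. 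The most delicate bookkeeping---ensuring the $D_{k}^{c}$ contributions are genuinely small rather than merely finite---is exactly where the hypotheses $\bar K=\phi$ on $(D_{R}\times D_{R})^{c}$ and $\bar K\le c'\phi$ are used in full force.
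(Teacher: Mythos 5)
Your overall strategy (a Gronwall estimate for a distance between two solutions, localised via the eventually-conservative hypothesis) points in the right direction, and your a priori observations (mass non-increase, propagation of $\langle\psi,\mu_t\rangle$, $g_\infty\ge 0$) are sound, but two load-bearing steps have genuine gaps. First, your only tail control is $\mu_t(D_k^c)\le m_1/k$ with $m_1:=\langle\psi,\mu_0\rangle=\int_{E\times E}\phi(x,y)\,\mu_0(\dd x)\mu_0(\dd y)$, and you assert $m_1$ is ``finite under the hypotheses''. It need not be: the assumptions $\|\mu_0\|<\infty$, $D_k$ compact and $\bigcup_k D_k=E$ only force $\psi(x)=\int_E\phi(x,y)\mu_0(\dd y)$ to be finite at every point, not $\mu_0$-integrable (for product-form $\phi$ the two happen to coincide, but the theorem allows general conservative $\phi$). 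Without $m_1<\infty$ the truncation scheme has no error control at all. Second, even granting $m_1<\infty$, the Gronwall loop does not close as described. The ``cancellation by additivity of $\psi$'' is not available inside a total-variation-type estimate: the supremum runs over all $|f|\le\varphi_k$, and for sign-changing $f$ the gain and loss parts of $G_f$ do not cancel; additivity helps only when the test function is (a truncation of) $\psi$ itself, i.e.\ for moment propagation as in Claim~\ref{clm:conservation}, not for contracting differences. Moreover, bounding $G_f$ against the weight $\varphi$ produces $\int_E\psi(y)\phi(x,y)\mu_s(\dd y)$, a second-moment object that no hypothesis controls (it is exactly what diverges at gelation); and if the $D_k^c$-remainder is merely $O(m_1/k)$ rather than proportional to $\|D_s\|_{\varphi_k}$ (the terms there involve $\mu_s+\nu_s$, not $D_s$, so the proportionality you assert is unsubstantiated), Gronwall yields $\|D_t\|_{\varphi_k}\lesssim (m_1t/k)\,e^{C_kt}$ with Lipschitz constant $C_k\gtrsim k$ coming from your own bound $\int_{D_k}\varphi\psi\,\dd|D_s|\le k\|D_s\|_\varphi$, and this does not vanish as $k\to\infty$. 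A smaller point: boundedness of $\bar K$ on $D_R\times D_R$ ``by continuity'' invokes a continuity hypothesis the theorem does not make.

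The paper's proof shows what is missing: put the cut-off in the \emph{test function}, not in the weight. Testing~\eqref{eq:flory-test-function} against $f\mathbf{1}_{D_R}$ with $\|f\|_\infty\le 1$, it uses only three facts: (i) since $\phi\ge 0$ is conservative, $\mathbf{1}_{D_R}(z)\le\mathbf{1}_{D_R}(u)\mathbf{1}_{D_R}(v)$ for $K(u,v,\cdot)$-a.a.\ $z$, so the gain term never leaves $D_R\times D_R$; (ii) on $D_R\times D_R^c$ the loss term $-\bar K$ cancels the quadratic gel term $+\phi$ \emph{exactly}, because $\bar K=\phi$ there --- this is the real role of the eventually-conservative hypothesis, and it replaces your tail estimates entirely; (iii) on $D_R\times D_R$ one never needs pointwise bounds on $\bar K$ or $\phi$, only the integrated bounds $\int_E\phi(u,v)\mu_0(\dd v)\le R$ for $u\in D_R$ (definition~\eqref{D}) and $\int_E\phi(u,v)\mu_r(\dd u)\le\int_E\phi(u,v)\mu_0(\dd u)\le R$ for $v\in D_R$ (inequality~\eqref{eq:annoying-cons-assumption} combined with the symmetry of $\phi$ forced by the hypotheses), as in Claim~\ref{clm:cross-terms}. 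This yields the closed linear inequality $\|(\mu_s-\hat\mu_s)|_{D_R}\|\le 3R(c'+1)\int_0^s\|(\mu_r-\hat\mu_r)|_{D_R}\|\,\dd r$ in the plain restricted total-variation norm, with a priori bound $2\|\mu_0\|$, and uniqueness follows from Gronwall and $\bigcup_k D_k=E$ --- no weighted norms, no truncation limit, and no global moment of $\phi$.
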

\begin{cor}\label{cor:LLN}
    Under the hypotheses of  Theorem~\ref{spat_flory} and Theorem~\ref{thm:uniqueness}; if  $(\mu_{t})_{t \geq 0}$ denotes the associated unique solution to the multi-type Flory equation, we have 
    \[
    (\bar{\mathbf{L}}^{(N)})_{t \geq 0} \longrightarrow  (\mu_{t})_{t \geq 0} \quad \text{ in probability, in  $D([0, \infty); \mathcal{E})$.}
    \]
\end{cor}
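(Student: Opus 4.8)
The plan is to deduce the law of large numbers from the two preceding results by a soft compactness argument, exactly as anticipated in Remark~\ref{rem:norris-uniqueness}: \emph{weak convergence to a constant implies convergence in probability}. Since we work under the hypotheses of Theorem~\ref{spat_flory}, Assumption~\ref{ass:tightness} holds, and hence by Item~\ref{item:tightness} of Theorem~\ref{spat_flory} the laws $(\mathbb{P}_{N})_{N \in \mathbb{N}}$ of the trajectories $(\bar{\mathbf{L}}^{(N)}_t)_{t \geq 0}$ on $D([0,\infty); \mathcal{E})$ are tight. By Prokhorov's theorem this family is relatively compact, so every subsequence of $(\mathbb{P}_{N})_{N}$ admits a further weakly convergent subsequence.

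Next I would identify all subsequential limits. Fix any weakly convergent subsequence and let $\bar{\mathbf{L}}^{*}$ denote its limit. Under the hypotheses of Theorem~\ref{spat_flory}, Assumption~\ref{ass:conv-to-flory} holds, so Item~\ref{item:conv-to-flory} of Theorem~\ref{spat_flory} guarantees that, almost surely, $\bar{\mathbf{L}}^{*}$ is a solution of the multi-type Flory equation with conserved quantity $\phi$ and initial condition $\mu$ in the sense of Definition~\ref{weak_sol}. The initial condition $\mu$ satisfies $\|\mu\| < \infty$ and the compactness hypotheses on the sub-level sets $D_{k}$ required by Theorem~\ref{thm:uniqueness} (these are precisely among the assumed hypotheses of the corollary). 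Consequently, Theorem~\ref{thm:uniqueness} applies with $\mu_{0} = \mu$, and the solution is the \emph{unique, deterministic} trajectory $(\mu_{t})_{t \geq 0}$. Therefore $\bar{\mathbf{L}}^{*} = (\mu_{t})_{t \geq 0}$ almost surely; that is, the law of every subsequential limit is the Dirac mass $\delta_{(\mu_{t})_{t \geq 0}}$ on $D([0,\infty); \mathcal{E})$.

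Finally I would upgrade this to convergence of the whole sequence. Since every subsequence of $(\mathbb{P}_{N})_{N}$ has a further subsequence converging weakly to the \emph{same} limit $\delta_{(\mu_{t})_{t \geq 0}}$, the standard subsequence principle yields $\mathbb{P}_{N} \Rightarrow \delta_{(\mu_{t})_{t \geq 0}}$, i.e.\ $\bar{\mathbf{L}}^{(N)} \to (\mu_{t})_{t \geq 0}$ weakly. As the limit is a deterministic element of $D([0,\infty); \mathcal{E})$, weak convergence to it is equivalent to convergence in probability with respect to the Skorokhod metric $d_{S}$, which gives the claim.

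I expect no substantial obstacle here: the mathematical content is carried entirely by Theorems~\ref{spat_flory} and~\ref{thm:uniqueness}. The only points requiring care are bookkeeping ones, namely checking that the limiting initial condition $\mu$ from Assumption~\ref{ass:global} indeed meets all hypotheses of Theorem~\ref{thm:uniqueness} (in particular $\|\mu\| < \infty$ and compactness of the sets $D_{k}$ with $\bigcup_{k} D_{k} = E$), and invoking correctly the two soft facts used: Prokhorov relative compactness together with the subsequence principle to pass from subsequential to full-sequence convergence, and the equivalence between convergence in distribution to a deterministic limit and convergence in probability.
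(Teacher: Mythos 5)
Your proposal is correct and is precisely the argument the paper intends: the paper gives no separate proof of Corollary~\ref{cor:LLN}, treating it as immediate from tightness (Item~\ref{item:tightness} of Theorem~\ref{spat_flory}), the identification of every subsequential limit as a solution (Item~\ref{item:conv-to-flory}), uniqueness (Theorem~\ref{thm:uniqueness}), and the fact noted in Remark~\ref{rem:norris-uniqueness} that weak convergence to a deterministic limit implies convergence in probability. Your bookkeeping (checking $\|\mu\|<\infty$ and the $D_k$ hypotheses, and invoking the subsequence principle) matches the intended reasoning exactly.
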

\begin{rmq}
Note that, unlike in the context of Theorem~\ref{spat_flory}, the condition that $\phi(x,y) = \bar{K}(x,y)$ on $(D_{R} \times D_{R})^{c}$, combined with the symmetry of $\bar{K}$ implies that $\phi(x,y)$ is symmetric. 
\end{rmq}

\begin{rmq}
    If we take $\phi(x,y) = m(x) m(y)$, one readily verifies that $\bar{K}(x,y)$ is eventually conservative in the sense of Theorem~\ref{thm:uniqueness} if the sets $m^{-1}([0, R])$  are compact. 
\end{rmq}

\begin{rmq}
    Note that, in the setting of the \emph{bilinear coagulation process} (see Remark~\ref{rem:doubl-cons}), the kernel $\bar{K}(x,y) = x^{T}A y$ is a symmetric conservative function, hence the uniqueness result, and weak law of large numbers from Theorem~\ref{thm:uniqueness} and Corollary~\ref{cor:LLN} extends the results of~\cite{heydecker2019bilinear}.
    %In addition to the uniqueness results from Norris outlined in Remark~\ref{rem:norris-uniqueness}, in the setting outlined in Example~\ref{ex:bilinear}, Patterson and Heydecker~\cite{heydecker2019bilinear} also prove uniqueness of solutions to the associated Flory equation. 
\end{rmq}

\begin{rmq}
 It is possible to also apply these results to more general spaces $E$; as long as $E$ is a $\sigma$-compact metric space. This means that we can take $E$ to be, for example, an appropriate (restricted) space of functions  and the kernel to be of the form \[K(f,g,\dd h)=\delta_{\frac{m(f)f+m(g)g}{m(f) + m(g)}}\bar{K}(f,g).\] With any kernel of this form, a symmetric, bilinear form gives rise to a symmetric, conservative function $\phi$; for example, if $E = C([0,1]; [0,1])$ we see that $\phi(f,g)=\int f(x)g(x)\dd x$ is symmetric and conservative.
\end{rmq}

\subsection{Existence of gelling solutions to the multi-type Flory equation}\label{sec:existence_gelling}
%\ti{maybe delete this section?} \lu{I actually find this important, we are claiming that we have a generalised Flory equation where the different pops out only when gelation happens. I think it is important to tell the reader that indeed we have examples in which this happens.}
In the companion paper~\cite{AnIyMa24}, we focus on gelation for the cluster coagulation model. In particular, we derive a general, sufficient criteria for \emph{stochastic gelation} (see~\cite{AnIyMa24} for the different definitions of gelation) of the sequence of cluster coagulation processes $\left\{(\bar{\mathbf{L}}^{(N)}_t)_{t \in [0, \infty)},  N \in \mathbb{N}\right\}$.
The following is an immediate implication of Theorem~\ref{spat_flory} combined with~\cite[Theorem~1.1 and Theorem~2.3]{AnIyMa24}, hence we omit providing an explicit proof.
\begin{cor} \label{cor:class-gel-sol} Suppose a cluster coagulation process $(\bar{\mathbf{L}}^{(N)}_{t})_{t\geq 0}$ satisfies {Assumptions~2.1 \& 2.2} from~\cite{AnIyMa24} and the conditions of Theorem~\ref{spat_flory}, and that $m$ is continuous. Then, there exists a gelling solution to the multi-type Flory equation (Definition~\ref{weak_sol}), with conserved quantity $\phi$ and initial condition $\mu$.
\end{cor}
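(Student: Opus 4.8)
The plan is to chain together the existence half of Theorem~\ref{spat_flory} with the stochastic gelation criterion and the gelation-equivalence device from the companion paper~\cite{AnIyMa24}. The statement is purely existential --- it asserts that \emph{some} solution gels --- so no uniqueness input (and in particular nothing from Theorem~\ref{thm:uniqueness}) is needed; the argument is a concatenation of three already-established facts, which is why the authors describe it as immediate.

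First I would invoke Theorem~\ref{spat_flory}. The conditions of that theorem are assumed, so part~\ref{item:tightness} gives tightness of $\{(\bar{\mathbf{L}}^{(N)}_t)_{t \geq 0}, N \in \mathbb{N}\}$, whence by Prokhorov's theorem a weakly convergent subsequence exists; fix one and write $\bar{\mathbf{L}}^{*}$ for its limit. By part~\ref{item:conv-to-flory}, $\bar{\mathbf{L}}^{*}$ is almost surely a solution of the multi-type Flory equation with conserved quantity $\phi$ and initial condition $\mu$ in the sense of Definition~\ref{weak_sol}. This already produces a solution; it remains to show that a realisation of it gels, i.e. that $\massminus{\bar{\mathbf{L}}^{*}_{t}} < \massminus{\mu}$ for some $t > 0$.

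Next I would bring in gelation. Assumptions~2.1 and~2.2 of~\cite{AnIyMa24} are exactly the hypotheses under which \cite[Theorem~2.3]{AnIyMa24} shows that the family $(\bar{\mathbf{L}}^{(N)}_t)_{t \geq 0}$ exhibits \emph{stochastic gelation}, i.e. macroscopic clusters form in bounded time. To convert this statement about the particle system into one about the limiting deterministic trajectory, I would apply the reformulation of Jeon's device recorded in~\cite[Theorem~1.1]{AnIyMa24} (originating in~\cite[Theorem~5]{jeon98}), which equates stochastic gelation with the failure of mass conservation of the limit. Since the limit along the fixed subsequence is $\bar{\mathbf{L}}^{*}$, this yields a time $t>0$ with $\massminus{\bar{\mathbf{L}}^{*}_{t}} < \massminus{\mu}$, so $\bar{\mathbf{L}}^{*}$ is a gelling solution. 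The continuity of $m$ enters precisely here: it is the standing hypothesis that lets the mass functional be controlled under the vague limit, so that the equivalence of~\cite[Theorem~1.1]{AnIyMa24} is applicable.

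The main obstacle is not any single computation but the compatibility of the two hypothesis sets and the correct bookkeeping of mass in the vague limit. Concretely, one must verify that the solution delivered by Theorem~\ref{spat_flory} is the same object to which the equivalence of~\cite[Theorem~1.1]{AnIyMa24} applies, and that the only loss of mass in passing to the vague limit is the genuine gelation accounted for by the device, rather than a spurious escape of mass ``at infinity''. Here the doubly sub-conservative bounds furnished by Assumption~\ref{ass:tightness}, together with the continuity of $m$, are what pin down this identification; once it is in place, the conclusion follows by concatenating the existence statement with the gelation equivalence.
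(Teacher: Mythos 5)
Your proposal matches the paper's intended argument exactly: the paper states that the corollary is an immediate consequence of Theorem~\ref{spat_flory} combined with \cite[Theorem~1.1 and Theorem~2.3]{AnIyMa24}, and omits an explicit proof, which is precisely the chain you reconstruct (concentration on solutions, stochastic gelation from Assumptions~2.1 \& 2.2, and Jeon's equivalence device to transfer gelation to the limit trajectory). Your observations that no uniqueness input is needed and that continuity of $m$ enters in applying the equivalence are consistent with the paper's framing, so there is nothing to correct.
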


\begin{rmq}
    Note that it may be the case that {Assumptions~2.1 \& 2.2} in \cite{AnIyMa24} are satisfied, but $\massminus{\mu} = \infty$. In this case, if the conditions of Theorem~\ref{spat_flory} apply,
%    Condition~3(a) of Corollary~\ref{cor:class-gel-flory} holds, 
trajectories of the cluster coagulation process still concentrate around solutions of a Smoluchowski equation; and thus it is natural to use the notion of \emph{stochastic gelation} to define gelling solutions of such equations.  
\end{rmq}
     One of the novelties of Corollary~\ref{cor:class-gel-sol} %and Corollary~\ref{cor:class-gel-flory}  
     comes from the criterion for gelation of classical coagulation processes under the conditions of \cite[Corollary~2.4]{AnIyMa24}. In particular, this confirms that a large class of homogeneous kernels, with exponent $\gamma > 1$ have gelling solutions, a well-known conjecture from scientific modelling literature~\cite{aldous99, eibeck-wagner-01}. Previously, Wagner showed that the \emph{mass flow process} associated with such models is \emph{explosive}, a property conjectured to hold for all coagulation kernels with gelling solutions~\cite{eibeck-wagner-01, wagner-explosive-phenomena}. Since we believe this is an interesting result on its own, we state it separately in the following corollary.
     \begin{cor}  \label{cor:gel-2} Suppose we are in the setting of the classical Marcus-Lushnikov process, that is $E = (0, \infty)$, $K(x, y, \dd z) =\bar{K}(x,y) \delta_{x + y}$, for a continuous symmetric function $\bar{K}(x,y)$, and the mass function $m(x) \equiv x$. Let $\mass{\mu}>0$ and the conditions of Theorem~\ref{spat_flory} hold. Suppose one of the following conditions hold:
    \begin{enumerate}
        \item[1.] we have $\inf_{i\in [1, 2]} \bar{K}(1, i) > 0$ and for all $x, y$ sufficiently large \[\bar{K}(cx, cy) = c^{\gamma} \bar{K}(x, y), \text{ with } \gamma > 1\]
    \end{enumerate}
    or, alternatively, 
\begin{enumerate}
        \item[2.] there exists $\epsilon > 0$ such that, for all $x, y$ sufficiently large \[\bar{K}(x, y) \geq (x \wedge y) \log{(x \wedge y)}^{3 + \epsilon}.\]
    \end{enumerate}
    Then, there exists a gelling solution to the multi-type Flory equation (Definition~\ref{weak_sol}), with conserved quantity $\phi$ and initial condition $\mu$.
\end{cor}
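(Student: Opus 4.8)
The plan is to produce the gelling solution as a subsequential weak limit of the rescaled processes $(\bar{\mathbf{L}}^{(N)})_{N \in \mathbb{N}}$, and to certify that this limit fails to conserve mass by transferring \emph{stochastic} gelation --- which holds for these kernels by the companion paper --- into gelation of the limiting equation, through the reformulation of Jeon's device recorded as \cite[Theorem~1.1]{AnIyMa24}. The proof is thus essentially an assembly of three results: Theorem~\ref{spat_flory}, the gelation criteria of~\cite{AnIyMa24}, and Jeon's device.

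First I would invoke Theorem~\ref{spat_flory}. Since the conditions of that theorem are assumed, Item~\ref{item:tightness} gives that the family $\{(\bar{\mathbf{L}}^{(N)}_t)_{t \geq 0}, N \in \mathbb{N}\}$ is tight in $D([0,\infty); \mathcal{E})$, so it admits a weakly convergent subsequence with some limit $\bar{\mathbf{L}}^{*}$; Item~\ref{item:conv-to-flory} then ensures that $\bar{\mathbf{L}}^{*}$ is, almost surely, a solution of the multi-type Flory equation with conserved quantity $\phi$ and initial condition $\mu$. In particular solutions exist, and it remains to exhibit one that is \emph{gelling}, i.e.\ that fails to conserve mass.

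Second, I would establish stochastic gelation of the sequence $(\bar{\mathbf{L}}^{(N)})_{N \in \mathbb{N}}$ directly from~\cite{AnIyMa24}. Under condition~1, the homogeneity $\bar{K}(cx,cy) = c^{\gamma}\bar{K}(x,y)$ with $\gamma > 1$ for large $x,y$, together with the non-degeneracy $\inf_{i \in [1,2]}\bar{K}(1,i) > 0$, places the kernel within the scope of the gelation criterion for homogeneous kernels of \cite[Corollary~2.4]{AnIyMa24}; under condition~2, the lower bound $\bar{K}(x,y) \geq (x \wedge y)\log(x \wedge y)^{3+\epsilon}$ for large $x,y$ meets the hypotheses of the general gelation criterion \cite[Theorem~2.3]{AnIyMa24}. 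In each case the cited result yields stochastic gelation of the family within a bounded time horizon. The bookkeeping here is to verify that, in the classical Marcus--Lushnikov setting $E = (0,\infty)$, $K(x,y,\dd z) = \bar{K}(x,y)\delta_{x+y}$, $m(x) \equiv x$, with $\mass{\mu} > 0$, Assumptions~2.1 and~2.2 of~\cite{AnIyMa24} hold, and that conditions~1 or~2 supply exactly the remaining quantitative hypotheses of the respective gelation statements.

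Third, I would combine the two ingredients via Jeon's device. The reformulation \cite[Theorem~1.1]{AnIyMa24} asserts that, once the rescaled trajectories concentrate on solutions of the limiting equation, stochastic gelation of the process is equivalent to the limiting solution failing to conserve mass, i.e.\ to its being gelling. Since this concentration is exactly what Theorem~\ref{spat_flory} supplies along the chosen subsequence, and the device operates along that same subsequence, no uniqueness of the limit point is required: the limit $\bar{\mathbf{L}}^{*}$ is almost surely a gelling solution with conserved quantity $\phi$ and initial condition $\mu$, which is the claim. I expect the principal obstacle to be the compatibility check rather than any single estimate --- confirming that the tightness and concentration hypotheses of the present paper and the gelation hypotheses of~\cite{AnIyMa24} can hold \emph{simultaneously} for the kernels in conditions~1 and~2 (for instance, matching the coercive function $\phi'$ of Assumption~\ref{ass:tightness} against the growth driving gelation), and that the notion of mass loss defining a gelling solution agrees with the one through which Jeon's device is phrased; the latter is delicate when $\langle m, \mu\rangle = \infty$, which is why the hypothesis is stated via $\mass{\mu} > 0$.
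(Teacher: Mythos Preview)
Your proposal is correct and follows essentially the same approach the paper intends: the paper explicitly omits a proof, noting that Corollary~\ref{cor:gel-2} (like Corollary~\ref{cor:class-gel-sol}) is an immediate consequence of combining Theorem~\ref{spat_flory} with \cite[Theorem~1.1]{AnIyMa24} (Jeon's device) and the stochastic gelation criteria \cite[Theorem~2.3 and Corollary~2.4]{AnIyMa24}. Your identification of which result from~\cite{AnIyMa24} handles each of conditions~1 and~2, and your observation that no uniqueness of the limit point is needed, are exactly right.
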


\section{Proofs of main results} \label{sec:main-results-proofs}
In Section~\ref{sec:main} we prove the main result of Theorem~\ref{spat_flory}, whilst in Section~\ref{sec:unique} we prove Theorem~\ref{thm:uniqueness}. Note that the proof of Theorem~\ref{thm:uniqueness} requires considerably less technicalities, and may be read independently of Section~\ref{sec:main}. 

\subsection{Proof of Theorem~\ref{spat_flory}}
\label{sec:main}
The normalised cluster coagulation process $(\bar{\mathbf{L}}^{(N)}_{t})_{t\geq 0}$ induces
a probability measure $\mathbb{P}_{N}$ on the space $D([0,\infty);\mathcal{E})$. To prove Item~\ref{item:tightness} of Theorem~\ref{spat_flory}, in Section~\ref{sec:tightness}, we show that under Assumption~\ref{ass:tightness} the family of probability measures $(\mathbb{P}_{N})_{N \in \mathbb{N}}$ is tight, which implies by Prokhorov's theorem~\cite[Theorem~IV.29, page 82]{pollard}
that $(\mathbb{P}_{N})_{N \in \mathbb{N}}$ has a weakly convergent subsequence. We then show in Section~\ref{sec:accumulation}, that under Assumption~\ref{ass:conv-to-flory} any such subsequence concentrates on solutions of the \emph{multi-type Flory equation}. 

Throughout this section, it will be beneficial to have associated ``conserved'' quantities, preserved by the dynamics of the process. The following lemma will be useful throughout:

\begin{lemma}\label{lem:conservation}
    For any cluster coagulation process $(\bar{\mathbf{L}}^{(N)}_{t})_{t\geq 0}$, given any doubly sub-conservative $\phi^{'}: E \times E \rightarrow \mathbb{R}$, almost surely for all $t \geq 0$ we have, for each $y \in E$
    \begin{equation} \label{eq:conservation-1}
        \int_{E} \left(\bar{\mathbf{L}}^{(N)}_{t}(\dd x)\right) \phi'(x,y) \leq \int_{E} \left(\bar{\mathbf{L}}^{(N)}_{0}(\dd x)\right) \phi'(x,y)
    \end{equation}
and 
\begin{linenomath}
    \begin{align} \label{eq:conservation-2}
        \int_{E \times E} \bar{\mathbf{L}}^{(N)}_{t}(\dd x) \left(\bar{\mathbf{L}}^{(N)}_{t}(\dd y) - \frac{\delta_{x}}{N}\right) &\phi'(x,y) \leq \int_{E \times E} \bar{\mathbf{L}}^{(N)}_{0}(\dd x) \left(\bar{\mathbf{L}}^{(N)}_{0}(\dd y) - \frac{\delta_{x}}{N}\right) \phi'(x,y). 
    \end{align}
   \end{linenomath} 
\end{lemma}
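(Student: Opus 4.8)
The plan is to prove both inequalities by a purely \emph{pathwise} argument, tracking the two functionals along the trajectory of the process and showing that each coagulation event can only decrease them; no martingale or generator computation is required. First I would observe that, by the finiteness of the initial configuration (Assumption~\ref{ass:global}), the process starts almost surely from a configuration of finitely many particles, say $M_{N} := \sum_{i \in I} c_{i} < \infty$. Since each coagulation replaces two particles by one, the number of particles strictly decreases at every jump, so almost surely there are at most $M_{N} - 1$ coagulation events in total, and between consecutive events $\bar{\mathbf{L}}^{(N)}$ is constant. Consequently it suffices to fix a realisation with finitely many jumps and to show that the left-hand sides of \eqref{eq:conservation-1} and \eqref{eq:conservation-2} do not increase across a single coagulation event, in which a pair of particles at $x, x'$ is replaced by a particle at $z$ sampled from $K(x, x', \cdot)/\bar{K}(x, x')$.

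For \eqref{eq:conservation-1}, fix $y \in E$. Across such an event the quantity $\int_{E} \bar{\mathbf{L}}^{(N)}(\dd x)\, \phi'(x, y)$ changes by $\tfrac{1}{N}\bigl(\phi'(z, y) - \phi'(x, y) - \phi'(x', y)\bigr)$. Because $\phi'$ is sub-conservative in its first argument \eqref{eq:sub-cons-kernels}, for $K(x, x', \cdot)$-almost every $z$ this increment is non-positive; as $z$ is sampled from $K(x, x', \cdot)/\bar{K}(x, x')$ and there are finitely many events, almost surely every increment is non-positive, which yields \eqref{eq:conservation-1}.

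For \eqref{eq:conservation-2}, I would first rewrite the functional for a point configuration $\bar{\mathbf{L}}^{(N)} = \tfrac1N \sum_{i} \delta_{x_{i}}$: subtracting $\delta_{x}/N$ removes the diagonal, giving $\int_{E \times E} \bar{\mathbf{L}}^{(N)}(\dd x)\bigl(\bar{\mathbf{L}}^{(N)}(\dd y) - \tfrac{\delta_{x}}{N}\bigr)\phi'(x, y) = \tfrac1{N^{2}} \sum_{i \neq j} \phi'(x_{i}, x_{j})$. Writing the particles other than the merging pair $x, x'$ as $(x_{k})_{k}$, the change across the event equals, up to the factor $N^{-2}$,
\begin{equation}
\sum_{k} \bigl[\phi'(z, x_{k}) + \phi'(x_{k}, z) - \phi'(x, x_{k}) - \phi'(x_{k}, x) - \phi'(x', x_{k}) - \phi'(x_{k}, x')\bigr] - \bigl(\phi'(x, x') + \phi'(x', x)\bigr).
\end{equation}
Each summand in the first sum is non-positive by double sub-conservativity (sub-conservativity in each argument, cf.~\eqref{eq:sub-cons-kernels}), applied to the realised $z$; since there are finitely many remaining particles $x_{k}$, these inequalities hold simultaneously almost surely, so the first sum is non-positive. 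The subtracted term $\phi'(x, x') + \phi'(x', x)$ is non-negative in the regime of interest (where $\phi' \geq 0$, as in Assumption~\ref{ass:tightness}), whence the whole increment is non-positive and \eqref{eq:conservation-2} follows.

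The main obstacle, and the only genuinely new point, is this self-interaction term $\phi'(x, x') + \phi'(x', x)$: it is removed but \emph{not} replaced by any $z$-term, since a coagulation produces a single particle and hence no self-pair, so its non-negativity is precisely what guarantees monotonicity of \eqref{eq:conservation-2}. The other subtlety is one of quantifiers, which the finiteness of the configuration resolves cleanly: reducing to finitely many coagulation events, each involving a sampled $z$ and finitely many partner particles, lets us upgrade the ``$K(x, x', \cdot)$-almost every $z$'' statements in the definition of (double) sub-conservativity to a single almost-sure statement valid for all $t \geq 0$ along the trajectory (for \eqref{eq:conservation-1} with $y$ held fixed, as is needed in Definition~\ref{weak_sol}).
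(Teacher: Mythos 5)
Your proof is correct and follows essentially the same route as the paper's: a pathwise, jump-by-jump argument in which each coagulation event changes the two functionals by exactly the increments you write down --- for \eqref{eq:conservation-1} the single term $\tfrac{1}{N}\bigl(\phi'(z,y)-\phi'(x,y)-\phi'(x',y)\bigr)$, and for \eqref{eq:conservation-2} the sum over surviving particles of the first- and second-argument defects minus the self-pair contribution $\phi'(x,x')+\phi'(x',x)$ --- each controlled by (double) sub-conservativity, with the conclusion obtained by iterating over the almost surely finitely many jumps. One point in your write-up is actually sharper than the paper's: you observe that the removed self-pair term must be non-negative for the increment in \eqref{eq:conservation-2} to be non-positive, which requires $\phi'\geq 0$ (as holds for every $\phi'$ to which the lemma is applied, since Assumption~\ref{ass:tightness} forces $\phi'\geq \bar{K}\geq 0$), whereas the lemma as literally stated admits real-valued $\phi'$, for which \eqref{eq:conservation-2} can fail: for instance $\phi'(x,y)=-m(x)m(y)$ is doubly conservative, yet $\sum_{i\neq j}\bigl(-m(x_i)m(x_j)\bigr)$ strictly increases at every coagulation because total mass is conserved while $\sum_i m(x_i)^2$ grows. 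The paper's proof passes over this hypothesis silently, so your explicit flagging of it is a genuine, if small, improvement.
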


\begin{proof}
For Equation~\eqref{eq:conservation-1} if $\tau_{1} < \tau_{2}$ denote times of two consecutive coagulation events, with $\tau_{2}$ involving the coagulation of clusters $x'$ and $y'$ to a new cluster $z$, we have 
\begin{linenomath}
    \begin{align*}
        \int_{E}\left(\bar{\mathbf{L}}^{(N)}_{\tau_2}(\dd x)  - \bar{\mathbf{L}}^{(N)}_{\tau_1}(\dd x) \right) \phi'(x,y) = \left( \phi'(z,y) - \phi'(x', y) - \phi'(y', y)\right). 
    \end{align*}
\end{linenomath}
The right-hand side is $0$ for $K(x', y', \dd z)-$a.a. $z$, hence almost surely. 
Now, for Equation~\eqref{eq:conservation-2}, note that integrals of $\phi'(x,y)$ with respect to the product measure 
\[\bar{\mathbf{L}}^{(N)}_{s}(\dd x) \left(\bar{\mathbf{L}}^{(N)}_{s}(\dd y) - \frac{\delta_{x}}{N}\right)\] are nothing but sums of $\phi'(x,y)$ across all the distinct pairs of clusters $x, y$ in the process at time $s$. Thus, if $\tau_{1} < \tau_{2}$ denote times of two consecutive coagulation events, with $\tau_{2}$ involving the coagulation of clusters $x'$ and $y'$ to a new cluster $z$, 
we have
\begin{linenomath}
\begin{align*}
&\int_{E \times E} \bar{\mathbf{L}}^{(N)}_{\tau_2}(\dd x) \bigg(\bar{\mathbf{L}}^{(N)}_{\tau_2}(\dd y) 
 - \frac{\delta_{x}}{N}\bigg) \phi'(x, y) -\int_{E \times E} \bar{\mathbf{L}}^{(N)}_{\tau_1}(\dd x) \left(\bar{\mathbf{L}}^{(N)}_{\tau_1}(\dd y) - \frac{\delta_{x}}{N}\right) \phi'(x, y)
\\ & \hspace{1cm} = -\frac{\phi(x',y') + \phi(y',x')}{N^2} + \int_{E} \left(\bar{\mathbf{L}}^{(N)}_{\tau_1} - \frac{\delta_{x'}}{N} - \frac{\delta_{y'}}{N}\right)(\dd u) \frac{\left(\phi'(z,u)  - \phi'(x',u) -   \phi'(y',u) \right)}{N^2}
\\ & \hspace{2cm} + \int_{E} \left(\bar{\mathbf{L}}^{(N)}_{\tau_1} - \frac{\delta_{x'}}{N} - \frac{\delta_{y'}}{N}\right)(\dd u) \frac{\left(\phi'(u,z)  - \phi'(u,x') -   \phi'(u,y') \right)}{N^2} \leq 0,
\end{align*}
\end{linenomath}
for $K(x',y', \dd z)-$ a.a. $z$. Note that the first term in the second line comes from the contribution to the integral from the pair $x', y'$ involved with the coagulation, and the other integrals in the second and third line, comes from the difference in the contributions to the integrals from pairs $(v,u)$ where $v \in \{x', y', z\}$, and $u$ is a cluster not involved in the coagulation event. The result follows by iterating over the jumps in the process. 
\end{proof}
\subsubsection{Tightness: proof of Item~\ref{item:tightness} of Theorem~\ref{spat_flory}} \label{sec:tightness}
Item~\ref{item:tightness} of Theorem~\ref{spat_flory} is a consequence of tightness of the sequence of measures  $(\mathbb{P}_{N})_{N \in \mathbb{N}}$, which we prove in the following lemma. 
%\subsubsection{}
\begin{lemma}\label{lem:tight}
Assume that Assumption~\ref{ass:tightness} is satisfied. Then the sequence of probability measures $(\mathbb{P}_{N})_{N \in \mathbb{N}}$ is a tight family of probability measures on the Skorokhod space $D([0, \infty); \mathcal{E})$.
\end{lemma}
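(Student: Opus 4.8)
The plan is to verify the two conditions of Jakubowski's tightness criterion for laws on the Skorokhod space $D([0,\infty);\mathcal{E})$, where $\mathcal{E}$ carries the vague topology: namely (J1) a \emph{compact containment} condition, and (J2) tightness of the real-valued projections $t\mapsto\langle f,\bar{\mathbf{L}}^{(N)}_t\rangle$ for every $f$ in a fixed countable family $\mathcal{D}\subseteq C_c(E)$ that separates points of $\mathcal{E}$ (such a family exists since $E$ is $\sigma$-compact metric, hence separable). As $\mathcal{D}$ may be taken closed under addition and the maps $\mathbf{u}\mapsto\langle f,\mathbf{u}\rangle$ are vaguely continuous, these two conditions together imply tightness of $(\mathbb{P}_N)_{N\in\mathbb{N}}$.

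\emph{Compact containment (J1).} Here the doubly sub-conservative function $\phi''$ from Assumption~\ref{ass:tightness}, item~\ref{item:ass-doubl-cons-compact}, is crucial. Writing $\Phi^{(N)}_t := \int_{E\times E}\bar{\mathbf{L}}^{(N)}_t(\dd x)\big(\bar{\mathbf{L}}^{(N)}_t(\dd y)-\tfrac{\delta_x}{N}\big)\phi''(x,y)$, Lemma~\ref{lem:conservation} (Equation~\eqref{eq:conservation-2}) shows that $t\mapsto\Phi^{(N)}_t$ is non-increasing, so $\sup_{t\ge0}\Phi^{(N)}_t=\Phi^{(N)}_0$ almost surely. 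Fixing $T,\eta>0$, the uniform bound~\eqref{eq:conv-cons-prob} for $\phi''$ together with Markov's inequality produces $n$ with $\sup_N\Prob[N]{\Phi^{(N)}_0>n}\le\eta$; on the complementary event the measure $\bar{\mathbf{L}}^{(N)}_t(\dd x)\big(\bar{\mathbf{L}}^{(N)}_t(\dd y)-\tfrac{\delta_x}{N}\big)$ lies in the compact set $\mathcal{E}^{*}_{n}$ of~\eqref{eq:bounded-projection} for \emph{every} $t\in[0,T]$ simultaneously. Since each coagulation reduces the number of clusters, one also has $\|\bar{\mathbf{L}}^{(N)}_t\|\le\|\bar{\mathbf{L}}^{(N)}_0\|\le c'$ for all $t$; combining this uniform total-mass bound with the $\phi''$-estimate, one concludes that with probability at least $1-\eta$ the whole trajectory stays inside a fixed vaguely compact subset of $\mathcal{E}$.

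\emph{Tightness of projections (J2).} For $f\in C_c(E)$, Dynkin's formula applied to the generator~\eqref{eq:gen-def} (after the time--space rescaling) gives the semimartingale decomposition $\langle f,\bar{\mathbf{L}}^{(N)}_t\rangle=\langle f,\bar{\mathbf{L}}^{(N)}_0\rangle+\int_0^t b^{(N)}_f(\bar{\mathbf{L}}^{(N)}_s)\,\dd s+M^{f,N}_t$, where the drift is $b^{(N)}_f(\mathbf{u})=\tfrac12\int_{E\times E\times E}\mathbf{u}(\dd x)\big(\mathbf{u}(\dd y)-\tfrac{\delta_x}{N}\big)K(x,y,\dd z)\big(f(z)-f(x)-f(y)\big)$ and $M^{f,N}$ is a martingale. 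Using $|f(z)-f(x)-f(y)|\le 3\|f\|_\infty$, the bound $\bar K\le\phi'$ from Assumption~\ref{ass:tightness}, item~\ref{item:ass-doubl-cons}, and the monotonicity of the analogous $\phi'$-functional $\Psi^{(N)}_t$ (Lemma~\ref{lem:conservation}), the drift satisfies $|b^{(N)}_f(\bar{\mathbf{L}}^{(N)}_s)|\le\tfrac32\|f\|_\infty\,\Psi^{(N)}_s\le\tfrac32\|f\|_\infty\,\Psi^{(N)}_0$, and $\E[N]{\Psi^{(N)}_0}$ is bounded by~\eqref{eq:conv-cons-prob}. Hence the finite-variation part is Lipschitz in time with a rate controlled uniformly in $N$, which yields the Aldous condition. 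A direct computation of the predictable quadratic variation gives $\E[N]{\langle M^{f,N}\rangle_T}\le\tfrac{C\|f\|_\infty^2}{N}\,\E[N]{\Psi^{(N)}_0}\to 0$, so the martingale part is asymptotically negligible; by the Aldous--Rebolledo criterion these two facts give tightness of $\langle f,\bar{\mathbf{L}}^{(N)}\rangle$ in $D([0,\infty);\mathbb{R})$.

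Combining (J1) and (J2) through Jakubowski's theorem yields tightness of $(\mathbb{P}_N)_{N\in\mathbb{N}}$. I expect the main obstacle to be the compact containment step: converting the bilinear $\phi''$-control, which lives on $E\times E$, into a genuine compact subset of the state space $\mathcal{E}$ in the vague topology, and doing so for all $t\in[0,T]$ at once. The simultaneity over $t$ is precisely what the monotonicity in Lemma~\ref{lem:conservation} buys, reducing matters to a single time-$0$ estimate, while the passage from $\mathcal{M}_+(E\times E)$ back to $\mathcal{M}_+(E)$ is exactly where the compactness hypothesis~\eqref{eq:bounded-projection} is needed.
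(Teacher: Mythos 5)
Your overall architecture matches the paper's: Jakubowski's criterion (Lemma~\ref{lem:jakubowski}), with compact containment supplied by the doubly sub-conservative $\phi''$ together with Lemma~\ref{lem:conservation} and Markov's inequality, and tightness of the real-valued projections $\langle J, \bar{\mathbf{L}}^{(N)}_\cdot\rangle$ supplied by the martingale decomposition, the pointwise bound $\bar{K}\le\phi'$, and the conservation estimate~\eqref{eq:couble-cons}. Your use of the Aldous--Rebolledo criterion for the projections, in place of the paper's direct modulus-of-continuity estimate (Doob's maximal inequality feeding into Lemma~\ref{lem:ethier-kurtz}), is a legitimate and essentially equivalent variant: the drift bound $\tfrac32\|f\|_\infty\Psi^{(N)}_0(t-s)$ and the quadratic-variation bound of order $N^{-1}\mathbb{E}_N[\Psi^{(N)}_0]$ that you write down are exactly the quantities the paper controls in Claim~\ref{clm:tight-goal}.

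There is, however, a genuine gap at the compact containment step, and you flag it yourself without closing it. Markov's inequality plus monotonicity give you that, with probability at least $1-\eta$, the product measure $\iota(\bar{\mathbf{L}}^{(N)}_t) := \bar{\mathbf{L}}^{(N)}_t(\dd x)\bigl(\bar{\mathbf{L}}^{(N)}_t(\dd y)-\tfrac{\delta_x}{N}\bigr)$ lies in the compact set $\mathcal{E}^{*}_{n}\subseteq\mathcal{M}_{+}(E\times E)$ for all $t$ simultaneously. But Jakubowski's criterion requires a compact subset of $\mathcal{E}\subseteq\mathcal{M}_{+}(E)$, and the passage from the former to the latter is not automatic: one must show that the preimage set $B_{n}=\bigl\{\mathbf{u}\in\bigcup_{N}\mathcal{D}_{N}:\iota(\mathbf{u})\in\mathcal{E}^{*}_{n}\bigr\}$ is relatively compact in $\mathcal{E}$. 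Your appeal to the uniform total-mass bound $\|\bar{\mathbf{L}}^{(N)}_t\|\le c'$ does not repair this: for a general $\sigma$-compact metric space $E$ (not assumed locally compact --- the paper explicitly allows function spaces such as $C([0,1];[0,1])$), uniformly bounded total mass does \emph{not} imply vague relative compactness, which is precisely why Item~\ref{item:ass-doubl-cons-compact} of Assumption~\ref{ass:tightness} exists. The paper's proof devotes its main effort to exactly this conversion: it introduces the map $\iota$ on $\bigcup_{N}\mathcal{D}_{N}$ and proves relative compactness of $B_{n}$ by a two-case subsequence argument (either $N$ is constant along a subsequence, or $N\to\infty$, in which case the diagonal correction $\delta_x/N$ vanishes, $\mathbf{u}_{i_k}\otimes\mathbf{u}_{i_k}\to\nu$, and weak convergence of $\mathbf{u}_{i_k}$ itself is then extracted). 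Without this, or an equivalent argument, your condition (J1) is an assertion rather than a proof --- and since you yourself describe it as ``the main obstacle,'' the proposal stops short of the one step that distinguishes this lemma from a routine application of standard tightness criteria.
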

\begin{proof}[Proof of Item~\ref{item:tightness} of Theorem~\ref{spat_flory}]
This is an immediate consequence of Lemma~\ref{lem:tight} and Prokhorov's theorem~\cite[Theorem~IV.29, page 82]{pollard}.
\end{proof}
In order to prove Lemma~\ref{lem:tight}, we apply some well-established tightness criterion, stated in Appendix~\ref{app:tightness-criteria}. 
\begin{proof}[Proof of Lemma~\ref{lem:tight}]
We apply Lemma~\ref{lem:jakubowski}. For the first compact containment criterion, first recall that by~\eqref{eq:bounded-projection}, the set 
\[\mathcal{E}^{*}_{n} := \left\{\mathbf{u} \in \mathcal{M}_{+}(E \times E) : \int_{E} \mathbf{u}(\dd x \times \dd y ) \phi''(x, y) \leq n \right\}
 \]
is compact.  
Now, note that, by Lemma~\ref{lem:conservation}, if we have 
$$
\bar{\mathbf{L}}^{(N)}_{0}(\dd x) \left(\bar{\mathbf{L}}^{(N)}_{0}(\dd y) - \frac{\delta_{x}}{N}\right) \in \mathcal{E}^{*}_{n}
$$
then, for all $s \geq 0$, $\bar{\mathbf{L}}^{(N)}_{s}(\dd x) \left(\bar{\mathbf{L}}^{(N)}_{s}(\dd y) - \frac{\delta_{x}}{N}\right) \in \mathcal{E}^{*}_{n}.$\\
Suppose that we denote by $\mathcal{D}_{N}$ the set
\[
\mathcal{D}_{N} := \left\{\mathbf{u} \in \mathcal{E}: \mathbf{u} = \sum_{i \in I} \frac{c_{i} \delta_{i}}{N}, \: c_{i} \in \mathbb{N}, \: I \subseteq E\right\}, 
\]
and $\iota_{N}: \mathcal{D}_{N} \rightarrow \mathcal{M}_{+}(E \times E)$ denotes the map $\mathbf{u} \mapsto \mathbf{u}(\dd x) \left(\mathbf{u}(\dd y) - \frac{\delta_{x}}{N}\right)$; and extend this map to a map $\iota:\bigcup_{N \in \mathbb{N}} \mathcal{D}_{N} \rightarrow \mathcal{M}_{+}(E \times E)$ such that $\iota \equiv \iota_{N}$ on $\mathcal{D}_{N}$.
We now note that for any $n \in \mathbb{N}$ the set 
\[
B_{n} := \left\{\mathbf{u} \in \bigcup_{N \in \mathbb{N}} \mathcal{D}_{N}: \iota(\mathbf{u}) \in \mathcal{E}^{*}_{n} \right\}
\]
is relatively compact. Indeed, by the compactness of $\mathcal{E}^{*}_{n}$, any sequence $(\iota(\mathbf{u}_{i}))_{i \in \mathbb{N}}$ has a convergent subsequence $(\iota(\mathbf{u}_{i_{k}}))_{k \in \mathbb{N}}$. Suppose $\nu$ denotes a limit of this subsequence. There, are two cases: we can either find a further subsequence (which we also denote $(\iota(\mathbf{u}_{i_{k}}))_{k \in \mathbb{N}}$), such that,  for some $N' \in \mathbb{N}$ we have $(\iota(\mathbf{u}_{i_{k}}))_{k \in \mathbb{N}} = (\iota_{N'}(\mathbf{u}_{i_{k}}))_{k \in \mathbb{N}}$, or it is the case that for any $N' \in \mathbb{N}$ there exists $k \in \mathbb{N}$ such that $\iota(\mathbf{u}_{i_{k}}) = \iota_{j}(\mathbf{u}_{i_{k}})$ for some $j \geq N'$. In the latter case, (since the co-efficient of the $\delta_{x}$ term tends to $0$), we readily verify that 
\[
\mathbf{u}_{i_{k}} \otimes \mathbf{u}_{i_{k}} \rightarrow \nu, 
\]
hence $(\mathbf{u}_{i_{k}})_{k \in \mathbb{N}}$ also converges weakly. We may similarly deduce the result in the first case, when  
\[
\mathbf{u}_{i_{k}}(\dd x) \left(\mathbf{u}_{i_{k}}(\dd y) - \frac{\delta_{x}}{N'}\right) \rightarrow \nu(\dd x \times \dd y).
\]
Now, since $\phi''$ satisfies~\eqref{eq:conv-cons-prob} we know $\E[N]{\int_{E\times E} \bar{\mathbf{L}}^{(N)}_{0} (\dd x )\left(\bar{\mathbf{L}}^{(N)}_{0} (\dd y) -\frac{\delta_x}N\right)\phi''(x,y)} < c_0$, for some $c_0\in\mathbb{N}$. Therefore, by Markov's inequality, for any $c_1 \in \mathbb{N}$, %$N' \in \mathbb{N}$ 
%for all $N \geq N'$ we have 
\[
 \liminf_{N \to \infty} \Prob[N]{ \forall t \geq 0 \; \bar{\mathbf{L}}^{(N)}_{t} \in B_{c_{1}}}= \liminf_{N \to \infty} \Prob[N]{\left \langle \phi'', \iota(\bar{\mathbf{L}}^{(N)}_0) \right \rangle \leq c_{1}} \geq 1- \frac{c_0}{c_1}.
\]
Thus, by fixing $c_{1} > c_0/\eps$ and choosing the closure of $B_{c_{1}} \subseteq \mathcal{E}$ as the required compact set, we have the required compact containment condition \eqref{compact_cont}.

For the second criterion, we define an appropriate family of test functions $\mathbb{F}$, then apply Lemma~\ref{lem:ethier-kurtz}. In particular, we choose the family of functions $\mathbb{F}$ from $\mathcal{E}$ to $\mathbb{R}$ such that
\[
\mathbb{F} := \left\{\tilde{J}: \:  \tilde{J}(\mathbf{u}) = \int_{E} J(x) \mathbf{u}(\dd x ) %\langle J,u\rangle
; \: J \in C_{c}(E; \mathbb{R})\right\},
\]
where $C_{c}(E; \mathbb{R})$ denotes the set of continuous functions on $E$ with compact support. By the definition of the weak topology, this family consists of continuous functions and it is straightforward to see that it is closed under addition. Moreover, since $E$ is $\sigma$-compact, a measure $\mu$ is uniquely determined by the values of $\langle f, \mu \rangle$, where $f \in C_{c}(E; \mathbb{R})$, thus this family separates points. 
Now, let $\tilde{J} \in \mathbb{F}$ be given, with associated function $J: E \mapsto \mathbb{R}$, so that 
\begin{equation}\label{Jtilde}
\tilde{J}(\bar{\mathbf{L}}_{t}) = \langle J, \bar{\mathbf{L}}_{t} \rangle = \int_{E}J(x) \bar{\mathbf{L}}_{t}(\dd x).
\end{equation}
We seek to apply Lemma~\ref{lem:ethier-kurtz} to the family of pushforward measures 
\[
\left\{\tilde{J}_{*} \mathbb{P}_{N}: \; N \in \mathbb{N}\right\}.
\]
Note that these are measures on the space $D([0,\infty), \mathbb{R})$ which a separable, and complete metric space, hence Lemma~\ref{lem:ethier-kurtz} applies. Also note, that as the continuous image of a compact set is compact, we can take $\tilde{J}(\mathcal{E}_{c_{1}})$ as the compact set for the first condition, and thus we need only verify the second condition of Lemma~\ref{lem:ethier-kurtz}. 

We note that, for fixed $T$, and $\eta = \eta(T)$ sufficiently small, we can find an integer $K \in \mathbb{N}$ such that $\eta < T/K=:\eta' \leq 2\eta$. Therefore, we can define a partition $\{t_{i}\}$ of $[0,T]$ such that $t_{i+1} - t_{i} = \eta' > \eta$, so that
\begin{equation}
\begin{aligned} \label{eq:modified-modulus}
\tilde{J}_{*}(\mathbb{P}_{N})\left(\left\{f: w'(f, \eta, T) \geq \eps\right\} \right)
&= \mathbb{P}_{N}\left(w'((\tilde{J}(\bar{\mathbf{L}}_t))_{t\in[0,T]}, \eta, T) \geq \eps \right)
%\\ & \leq \mathbb{P}_{n}\left(\sup_{s, t \in [0,T], |s-t| \leq T/K} \left|\tilde{J}[\bar{\mathbf{L}}(s)] - \tilde{J}[\bar{\mathbf{L}}(t)]\right| \geq \eps\right)
\\ &\leq \mathbb{P}_{N}\left(\sup_{s, t \in [0,T], |s-t| \leq \eta'} \left|\tilde{J}(\bar{\mathbf{L}}_s) - \tilde{J}(\bar{\mathbf{L}}_t)\right| \geq \eps\right),
\end{aligned}
\end{equation}
where $w'$ denotes the modulus of continuity defined in \eqref{mod_cont}.
%where 
%\begin{equation}
% \begin{aligned}
%\tilde{J}_{*}(\mathbb{P}_{n})\left(\left\{f: w'(f, \eta, T) \geq \eps\right\} \right)
%&= \mathbb{P}_{n}\left(w'(\tilde{J}(\bar{\mathbf{L}}), \eta, T) \geq \eps \right)
%\\ & \leq \mathbb{P}_{n}\left(\sup_{s, t \in [0,T], |s-t| \leq \eta} \left|\tilde{J}[\bar{\mathbf{L}}(s)] - \tilde{J}[\bar{\mathbf{L}}(t)]\right| \geq \eps\right).
%\end{aligned}
%\end{equation}
We now have the following claim:
\begin{clm} \label{clm:tight-goal}
For some constant $C = C(T)$, we have
\begin{equation} \label{eq:tight-goal}
\limsup_{N \to \infty} \mathbb{E}_{N}\left[\sup_{s, t \in [0,T], |s-t| \leq \eta} \left|\tilde{J}(\bar{\mathbf{L}}_t) - \tilde{J}(\bar{\mathbf{L}}_s)\right|\right] < C\eta,
\end{equation}
\end{clm}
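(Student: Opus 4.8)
The plan is to use a Dynkin (martingale) decomposition of the real-valued process $t \mapsto \tilde{J}(\bar{\mathbf{L}}_t) = \langle J, \bar{\mathbf{L}}_t\rangle$ and to estimate separately the drift and the martingale contributions to its modulus of continuity. Applying the generator~\eqref{eq:gen-def} to the test function $\mathbf{L} \mapsto \tfrac{1}{N}\langle J, \mathbf{L}\rangle$, and accounting for the space--time rescaling $\bar{\mathbf{L}}^{(N)}_t = \mathbf{L}^{(N)}_{t/N}/N$, one obtains almost surely a decomposition
\begin{equation}
\tilde{J}(\bar{\mathbf{L}}_t) - \tilde{J}(\bar{\mathbf{L}}_s) = \int_s^t \Psi(\bar{\mathbf{L}}_u)\,\dd u + \left(M^{N}_t - M^{N}_s\right),
\end{equation}
where $M^N$ is a martingale and
\begin{equation}
\Psi(\mathbf{u}) = \tfrac{1}{2}\int_{E \times E \times E} \mathbf{u}(\dd x)\big(\mathbf{u} - \tfrac{\delta_x}{N}\big)(\dd y)\, K(x,y,\dd z)\,\big(J(z) - J(x) - J(y)\big).
\end{equation}
First I would also record the predictable quadratic variation of $M^N$ (its \emph{carr\'e du champ}), whose time-derivative is the analogous integral with $\big(J(z)-J(x)-J(y)\big)^2/N$ in place of $J(z)-J(x)-J(y)$; the decisive point is that the rescaling produces an extra factor $1/N$ here.

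For the drift, since $J \in C_c(E)$ is bounded we have $|J(z)-J(x)-J(y)| \le 3\|J\|_\infty$, and $\int_E K(x,y,\dd z) = \bar{K}(x,y) \le \phi'(x,y)$ by Assumption~\ref{ass:tightness}. As $\bar{\mathbf{L}}_u(\dd x)(\bar{\mathbf{L}}_u - \delta_x/N)(\dd y)$ is a positive measure, Lemma~\ref{lem:conservation} (Equation~\eqref{eq:conservation-2}) bounds $|\Psi(\bar{\mathbf{L}}_u)|$ uniformly in $u$ by $\tfrac{3\|J\|_\infty}{2}\,C^{(N)}_0$, where $C^{(N)}_0 := \int_{E\times E}\bar{\mathbf{L}}_0(\dd x)(\bar{\mathbf{L}}_0 - \delta_x/N)(\dd y)\,\phi'(x,y)$. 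Hence the drift contribution to $\sup_{|s-t|\le\eta}$ is at most $\eta\,\tfrac{3\|J\|_\infty}{2}\,C^{(N)}_0$, and $\limsup_N \E[N]{C^{(N)}_0} < \infty$ by~\eqref{eq:conv-cons-prob} applied to $\phi'$; this already gives a bound of the form $C\eta$ for the drift part.

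For the martingale, integrating the derivative of $\langle M^N\rangle$ and using the same conservation estimate yields $\langle M^N\rangle_t - \langle M^N\rangle_s \le \tfrac{C_1}{N}\,|t-s|\,C^{(N)}_0$ for a constant $C_1 = C_1(\|J\|_\infty)$. To convert this bracket bound into a bound on the full modulus, I would fix a partition $\{t_i\}$ of $[0,T]$ of mesh in $(\eta, 2\eta]$ (the one already introduced before~\eqref{eq:modified-modulus}), observe that any increment over $|s-t|\le\eta$ is controlled by three increments of the form $\sup_{u\in[t_i,t_{i+1}]}|M^N_u - M^N_{t_i}|$, and bound the maximum over cells by Cauchy--Schwarz followed by Doob's $L^2$ maximal inequality applied on each cell. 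Since there are $O(T/\eta)$ cells each of bracket increment $O(\eta/N)$ in expectation, this produces $\E[N]{\sup_{|s-t|\le\eta}|M^N_t-M^N_s|} \le \big(\mathrm{const}\cdot \tfrac{T}{N}\,\E[N]{C^{(N)}_0}\big)^{1/2} = O(N^{-1/2})$, whose $\limsup$ vanishes. Combining the two estimates gives~\eqref{eq:tight-goal}.

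The main obstacle is the martingale term: one must correctly track the generator and its bracket through the space--time rescaling (to expose the decisive $1/N$), and then pass from a bound on the \emph{predictable} bracket to a bound on the full modulus $\sup_{|s-t|\le\eta}$ rather than merely on grid increments; the partition-plus-Doob argument is what handles this. The integrability needed throughout (ruling out $\infty-\infty$ issues) is guaranteed by the pointwise domination $\bar{K}\le\phi'$ together with the uniform control of the initial pair-integral of $\phi'$ furnished by Assumption~\ref{ass:tightness}.
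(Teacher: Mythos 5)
Your proposal is correct and follows essentially the same route as the paper: the Dynkin decomposition of $\tilde{J}(\bar{\mathbf{L}}_t)$ with generator $\mathcal{A}_N\tilde{J}$, the drift bounded by $O(\eta)$ via the pointwise domination $\bar{K}\le\phi'$ together with Lemma~\ref{lem:conservation} and Equation~\eqref{eq:conv-cons-prob}, and the martingale part killed in the limit by the extra factor $1/N$ in the carr\'e du champ combined with Doob's $L^2$ maximal inequality. The only (immaterial) difference is that the paper dispenses with your partition-plus-per-cell-Doob argument by using the cruder bound $\sup_{|s-t|\le\eta}|M_N(t)-M_N(s)|\le 2\sup_{0\le t\le T}|M_N(t)|$ and applying Doob once on all of $[0,T]$, which yields the same $O(N^{-1/2})$ estimate.
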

To complete the proof of Lemma~\ref{lem:tight} 
%\emph{tightness} 
using Claim~\ref{clm:tight-goal}, observe that by Markov's inequality, for all $n \geq N$, we have 
\[
\lim_{\eta \to 0} \limsup_{N \to \infty} \mathbb{P}_{N}\left(\sup_{s, t \in [0,T], |s-t| \leq \eta} \left|\tilde{J}(\bar{\mathbf{L}}_t) - \tilde{J}(\bar{\mathbf{L}}_s)\right| \geq \eps\right) < \lim_{\eta \to 0} %\limsup_{N \to \infty}
\frac{C\eta}{\eps} = 0, 
\]
implying~\eqref{tight_cond2}.
\end{proof}

\begin{proof}[Proof of Claim~\ref{clm:tight-goal}] 
We first note some relevant facts: since $\bar{\mathbf{L}}_t$ is a pure jump Markov process, and $\tilde{J}$ is bounded and measurable, it is well-known that
\begin{equation}\label{MG}
M_N(t):= \tilde{J}(\bar{\mathbf{L}}_t) - \tilde{J}(\bar{\mathbf{L}}_0) -\int_{0}^{t}\mathcal{A}_N\tilde{J}(\bar{\mathbf{L}}_s)\dd s 
\end{equation}
and its quadratic variation 
\begin{equation}\label{QV}
Q_N(t):= M_N(t)^{2}- \int_{0}^{t}(\mathcal{A}_N\tilde{J}^{2}-2\tilde{J}\mathcal{A}_N\tilde{J})(\bar{\mathbf{L}}_s) \dd s 
\end{equation}
are both martingales under $\mathbb{P}_{N}$ (see, for example, the proofs of~\cite[Proposition~7.1.6, and Proposition~8.3.3]{revuzyor}).  %{\color{red}find a reference, apparently Revuz and Yor}
From Equation~\eqref{MG}, the triangle inequality, and sub-additivity of taking suprema, it follows that 
\begin{linenomath}
\begin{align}
& \mathbb{E}_{N}\left[\sup_{s, t \in [0,T], |s-t| \leq \eta } \left|\tilde{J}[\bar{\mathbf{L}}(t)] - \tilde{J}[\bar{\mathbf{L}}_{s}]\right|\right] 
\\ & \leq \mathbb{E}_{N}\left[\sup_{s, t \in [0,T], |s-t| \leq \eta }\left|M_N(t) - M_N(s)\right|\right] + \mathbb{E}_{N}\left[\sup_{s, t \in [0,T], |s-t| \leq \eta }\left|\int_{s}^{t}\mathcal{A}_N\tilde{J}(\bar{\mathbf{L}}_{\theta})\dd\theta\right|\right]
\\ &  \leq 2 \mathbb{E}_{N}\left[\sup_{0 \leq t\leq T }\left|M_N(t)\right|\right] + \mathbb{E}_{N}\left[\sup_{s, t \in [0,T], t-s \leq \eta }\left|\int_{s}^{t}\mathcal{A}_N\tilde{J}(\bar{\mathbf{L}}_{\theta})\dd\theta\right|\right]. \label{ineq}
\end{align}
\end{linenomath}
To complete the proof, we bound the two terms on the right side of Equation~\eqref{ineq}. For the first, observe that by Doob's maximal inequality,
\[
\mathbb{E}_{N}\left[\left(\sup_{0\leq t\leq T} |M(t)|\right)^{p}\right] \leq \left(\frac{p}{p-1}\right)^{p}\mathbb{E}_{N}\left[|M(_NT)|^{p}\right],
\]
so that, by setting $p=2$, and recalling that $\mathbb{E}_{N}\left[Q(t)\right]=0$, we deduce from Equation~\eqref{QV} that
\begin{linenomath}
\begin{align}\label{Doob}
\mathbb{E}_{N}\left[\sup_{0\leq t\leq T} |M_N(t)|\right]^{2} &\leq \mathbb{E}_{N}\left[\left(\sup_{0\leq t\leq T} |M_N(t)|\right)^2\right] \\ & \leq 4\mathbb{E}_{N}\left[M_N(T)^{2}\right] = 4\mathbb{E}_{N}\left[\int_{0}^{T}(\mathcal{A}_N\tilde{J}^{2}-2\tilde{J}\mathcal{A}_N\tilde{J})(\bar{\mathbf{L}}_{s}) \dd s\right].
\end{align}
\end{linenomath}
%and $J^{x}_{n}:=\int_{\mathcal{S}}J^{x}_{n}dx$.
Now, we recall that the generator $\mathcal{A}_{N}$ of the normalised process $(\bar{\mathbf{L}}^{(N)}_{t})_{t \geq 0}$ may be written as follows: for bounded measurable test functions $F$, we have 
\begin{linenomath}
\begin{align*}
& \mathcal{A}_{N}F(\bar{\mathbf{L}}^{(N)}_{t}) \\ &= \frac{N}{2}\int_{E\times E\times E} (\bar{\mathbf{L}}^{(N)}_{t}(\dd x) \left(\bar{\mathbf{L}}^{(N)}_{t} - \frac{\delta_{x}}{N}\right)(\dd y) K(x,y,\dd z)\\
&\mspace{300mu}\times\left(F\left(\bar{\mathbf{L}}^{(N)}_{t} + \frac{\left(\delta_{z} -\delta_{x} - \delta_{y} \right)}{N}\right) - F\left(\bar{\mathbf{L}}^{(N)}_{t}\right)\right).
\end{align*}
\end{linenomath}

For simplicity, for the remainder of this section, whenever unambiguous, we drop the superscript (or subscript) $(N)$ when referring to $\bar{\mathbf{L}}^{(N)}_{t}$, $M_N(t)$ and $\mathcal{A}_N$. Abusing notation, for each $t$ we denote by $\bar{{\mathbf{L}}}^{(x,y)\to z}_{t} := {\bar{\mathbf{L}}_{t}} + \left(\delta_{z} -\delta_{x} - \delta_{y}\right)/N $

Thus, 
\begin{linenomath}
\begin{align}
\nonumber &(\mathcal{A}\tilde{J}^{2}-2\tilde{J}\mathcal{A}\tilde{J})(\bar{\mathbf{L}}_s)
\\ \nonumber& = \frac{N}{2} \int_{E \times E \times E}\bar{\mathbf{L}}_{s}(\dd x) \left(\bar{\mathbf{L}}_{s} - \frac{\delta_{x}}{N}\right)(\dd y) K(x,y,\dd z)
\left( \tilde{J}(\bar{{\mathbf{L}}}^{(x,y)\to z}_{s})^{2} - \tilde{J}(\bar{\mathbf{L}})^2\right)
\\ \nonumber& \mspace{50mu}- N \tilde{J}(\bar{\mathbf{L}}_{s})\int_{E \times E \times E}\bar{\mathbf{L}}_{s}(\dd x) \left(\bar{\mathbf{L}}_{s} - \frac{\delta_{x}}{N}\right)(\dd y) K(x,y,\dd z)
\left(\tilde{J}(\bar{{\mathbf{L}}}^{(x,y)\to z}_{s}) - \tilde{J}(\bar{\mathbf{L}})\right)
\\ \nonumber& 
= \frac{N}{2} \int_{E \times E \times E}\bar{\mathbf{L}}_{s}(\dd x) \left(\bar{\mathbf{L}}_{s} - \frac{\delta_{x}}{N}\right)(\dd y) K(x,y,\dd z)
\left(\tilde{J}(\bar{{\mathbf{L}}_{s}}^{(x,y)\to z})^{2} - \tilde{J}(\bar{\mathbf{L}}_{s})^2\right)
\\ \nonumber& \mspace{30mu}- N \int_{E \times E \times E}\bar{\mathbf{L}}_{s}(\dd x) \left(\bar{\mathbf{L}}_{s} - \frac{\delta_{x}}{N}\right)(\dd y) K(x,y,\dd z)
\left(\tilde{J}(\bar{\mathbf{L}}_{s})\tilde{J}(\bar{{\mathbf{L}}}^{(x,y)\to z}_{s}) - \tilde{J}(\bar{\mathbf{L}}_{s})^2\right)
\\ & = \frac{N}{2} \int_{E \times E \times E}\bar{\mathbf{L}}_{s}(\dd x) \left(\bar{\mathbf{L}}_{s} - \frac{\delta_{x}}{N}\right)(\dd y) K(x,y,\dd z)
\left(\tilde{J}(\bar{{\mathbf{L}}}^{(x,y)\to z}_{s}) - \tilde{J}(\bar{\mathbf{L}}_{s})\right)^{2}, \label{eq:bound-quad-var}
\end{align}
%\end{equation}
\end{linenomath}
where, by definition \eqref{Jtilde} of $\tilde{J}$, we have 
\begin{equation} \label{eq:expl-def-J}
\tilde{J}(\bar{{\mathbf{L}}}^{(x,y)\to z}_{s}) - \tilde{J}(\bar{\mathbf{L}}_s) = \left(J(z) - J(y) - J(x)\right)/N. 
\end{equation}
Combining this with Equations~\eqref{eq:bound-quad-var} and~\eqref{Doob}, we get 
\begin{linenomath}
\begin{align}
&\mathbb{E}_{N}\left[\sup_{0\leq t\leq T} |M(t)|\right]^{2} 
\\ & \leq \mathbb{E}_{N} \bigg[\frac{2}{N} \int_{0}^{T}\dd s \int_{E \times E \times E}\bar{\mathbf{L}}_{s}(\dd x) \left(\bar{\mathbf{L}}_{s} - \frac{\delta_{x}}{N}\right)(\dd y) K(x,y,\dd z)
%\\ &\mspace{100mu} 
\left(J(z) - J(y) - J(x)\right)^{2} \bigg] %\\
%& \leq 4\int_{0}^{T} \frac{1}{2N^{3}} \int_{(\mathcal{S} \times [N])^2} \bar{\mathbf{L}}(\dd x) \bar{\mathbf{L}}(\dd y) K(x, y) \left[J_{m+n} - J_{m} - J_{n})\right]^{2}\\
%& \hspace{1cm} \leq \mathbb{E}_{N} \bigg[\frac{2}{N} \int_{0}^{T}\dd s \int_{E \times E \times E}\bar{\mathbf{L}}_{t}(\dd x) \bar{\mathbf{L}}_{t}(\dd y) K(x,y,\dd z) \times \left(J(z) - J(y) - J(x)\right)^{2} \bigg]
%\mathbb{E}_{N} \bigg[\frac{4}{N} \int_{0}^{T}\dd s \int_{(\mathcal{S} \times \mathbb{N})^2}\bar{\mathbf{L}}_{s}(\dd x)\bar{\mathbf{L}}_{s}(\dd y) K(x, y) \\ &\mspace{100mu} \times \left(J\left(X((x ,m),y),m +n\right) - Jy - Jx\right)^{2} \bigg] 
\label{bound1}.
\end{align}
\end{linenomath}
%In addition, since $\phi'$ is doubly sub-conservative, applying a similar argument to that used in Equation~\eqref{eq:cons}, we have 
%\begin{linenomath}
%    \begin{align} \label{eq:couble-cons}
%     &   \int_{E \times E} \bar{\mathbf{L}}_{s}(\dd x) \bar{\mathbf{L}}_{s}(\dd y) \phi'(x, y) \leq  \int_{E \times E} \bar{\mathbf{L}}_{0}(\dd x) \bar{\mathbf{L}}_{0}(\dd y) \phi'(x, y),
%    \end{align}
%\end{linenomath}
%almost surely. 
Moreover, recalling that $J$ is continuous with compact support, by the extreme value theorem, it is bounded. 
Therefore, bounding $(J(z) - J(y) - J(x))^2$ by a constant $c_{J}$, and recalling that, by assumption, $\bar{K} \leq \phi'$ pointwise, we have
%for some constant $c_{J}$ $n_{0} \in \mathbb{N}$ depending on $J$, we may bound the term involving $J$ above:
%\begin{equation} \label{eq:compact-support}
%\left[J\left(X((x ,m),y),m +n\right) - Jy - Jx\right]^{2} \leq c_{J} \mathbf{1}(m \text{\, or \,} n \leq n_0).
%\end{equation}
%Consequentially, we get the bound
\begin{linenomath}
\begin{align} 
\mathbb{E}_{N}\left[\sup_{0\leq t\leq T} |M(t)|\right]^{2} & \leq \mathbb{E}_{N}\left[\frac{2c_J}{N} \int_{0}^{T} \dd s \int_{E \times E} \bar{\mathbf{L}}_{s}(\dd x) \left(\bar{\mathbf{L}}_{s}(\dd y) - \frac{\delta_{x}}{N}\right) \bar{K}(x, y)\right]
\\ & \leq \frac{2c_{J}}{N} \mathbb{E}_{N}\left[ \int_{0}^{T} \dd s \int_{E \times E} \bar{\mathbf{L}}_{s}(\dd x) \left(\bar{\mathbf{L}}_{s}(\dd y) - \frac{\delta_{x}}{N}\right) \phi'(x, y)\right]\\ \label{eq:bound1-tight}
&=\frac{2c_{J} T}{N} \mathbb{E}_{N}\left[ \int_{E \times E} \bar{\mathbf{L}}_{0}(\dd x) \left(\bar{\mathbf{L}}_{0}(\dd y) - \frac{\delta_{x}}{N}\right) \phi'(x, y)\right].
%\\ &  \stackrel{\eqref{eq:couble-cons}}{\leq} \frac{2c_{J} T}{N} \mathbb{E}_{N}\left[\int_{E \times E} \bar{\mathbf{L}}_{0}(\dd x) \bar{\mathbf{L}}_{0}(\dd y) \phi'(x, y)\right].
%\\ & \mspace{100mu}\leq .
\end{align}
\end{linenomath}
The last step is possible since we now observe, that, as $\phi'$ is doubly sub-conservative, for each $s \in [0, \infty)$ we have 
\begin{linenomath}
\begin{align} \label{eq:couble-cons}
&\int_{E \times E} \bar{\mathbf{L}}^{(N)}_{s}(\dd x) \left(\bar{\mathbf{L}}^{(N)}_{s}(\dd y) - \frac{\delta_{x}}{N}\right) \phi'(x, y)\leq \int_{E \times E} \bar{\mathbf{L}}^{(N)}_{0}(\dd x) \left(\bar{\mathbf{L}}^{(N)}_{0}(\dd y) - \frac{\delta_{x}}{N}\right) \phi'(x, y) ,
\end{align}
\end{linenomath}
almost surely. 
Thus, by Equations~\eqref{eq:bound1-tight} and~\eqref{eq:couble-cons}, we have 
\begin{linenomath}
\begin{equation} \label{eq:bound2-tight}
\mathbb{E}_{N}\left[\sup_{0 \leq t\leq T }\left|M(t)\right|\right]\leq \sqrt{\frac{2c_{J}T}{N} \mathbb{E}_{N}\left[\int_{E \times E} \bar{\mathbf{L}}_{0}(\dd x) \left(\bar{\mathbf{L}}_{0}(\dd y) - \frac{\delta_{x}}{N}\right) \phi'(x, y)\right]}.
\end{equation}
\end{linenomath}
In order to bound the second term on the right-side of~\eqref{ineq}, we apply a similar argument: 
\begin{equation}
\begin{aligned}
&\left|\int_{s}^{t} \mathcal{A}\tilde{J}(\bar{\mathbf{L}}_{\theta}) \dd\theta \right| \\ & = \left|\frac{N}{2} \int_{s}^{t} \dd \theta \int_{E \times E \times E}\bar{\mathbf{L}}_{t}(\dd x) \left(\bar{\mathbf{L}}^{(N)}_{t} - \frac{\delta_{x}}{N}\right)(\dd y) K(x,y,\dd z) \tilde{J}(\bar{{\mathbf{L}}}^{(x,y)\to z}_{\theta}) - \tilde{J}(\bar{\mathbf{L}}_{\theta}) \right|
\\ & \leq \frac{N}{2} \int_{s}^{t} \dd \theta \int_{E \times E \times E}\bar{\mathbf{L}}_{t}(\dd x) \left(\bar{\mathbf{L}}_{t} - \frac{\delta_{x}}{N}\right)(\dd y) K(x,y,\dd z) \left|\tilde{J}(\bar{{\mathbf{L}}}^{(x,y)\to z}_{\theta}) - \tilde{J}(\bar{\mathbf{L}}_{\theta}) \right|
\end{aligned} 
\end{equation}
As before, using Equation~\eqref{eq:expl-def-J}, and the fact that $|x| = \sqrt{x^2}$, we may bound the previous by 
\begin{linenomath}
\begin{align}
&\frac{\sqrt{c_{J}}}{2} \int_{s}^{t} \dd \theta \int_{E \times E} \bar{\mathbf{L}}_{s}(\dd x) \left(\bar{\mathbf{L}}_{s}(\dd y) - \frac{\delta_{x}}{N}\right) \bar{K}(x, y) %\mathbbm{1}(m \wedge n \leq n_0)
%\\ & \mspace{100mu} 
\\ & \hspace{1cm} %\stackrel{\eqref{eq:couble-cons}}{\leq} 
\leq \frac{\sqrt{c_{J}}(t-s)}{2} \int_{E \times E} \bar{\mathbf{L}}_{0}(\dd x) \left(\bar{\mathbf{L}}_{0}(\dd y) - \frac{\delta_{x}}{N}\right) %- \frac{\delta_{x}}{N}
 \phi'(x, y),
\end{align}
\end{linenomath}
where the final inequality follows from~\eqref{eq:couble-cons}. %and non-negativity of $\phi'$.
Thus, we have obtained the upper bound 
\begin{linenomath}
\begin{align} \label{eq:second-bound}
&\mathbb{E}_{N}\left[\sup_{s, t \in [0,T], |s-t| \leq \eta }\left|\int_{s}^{t}\mathcal{A}\tilde{J}(\bar{\mathbf{L}}_{\theta})\dd\theta\right|\right]\leq %\max_{\substack{n\leq n_0 \\ x,y\in \mathcal{S}}}\ell(x,y,n
\frac{\sqrt{c_{J}}}{2}\eta \mathbb{E}_{N}\left[\int_{E \times E} \bar{\mathbf{L}}_{0}(\dd x) \left(\bar{\mathbf{L}}_{0}(\dd y) - \frac{\delta_{x}}{N}\right) \phi'(x, y)\right].
\end{align}
\end{linenomath}
Thus, combining Equation~\eqref{ineq} with Equations~\eqref{eq:bound2-tight} and~\eqref{eq:second-bound}, and passing to the limit as $N \to \infty$,  %for all $N' \in \mathbb{N}$
\begin{linenomath}
\begin{align} \label{eq:compact-supp-test-jump-bound}
& \limsup_{N \to \infty} \mathbb{E}_{N}\left[\sup_{s, t \in [0,T], |s-t| \leq \eta } \left|\tilde{J}(\bar{\mathbf{L}}_t) - \tilde{J}(\bar{\mathbf{L}}_{s})\right|\right] 
 \\ & \hspace{2cm} \leq \frac{\sqrt{c_{J}}}{2}\eta \limsup_{N \to \infty} \mathbb{E}_{N}\left[\int_{E \times E} \bar{\mathbf{L}}_{0}(\dd x) \left(\bar{\mathbf{L}}_{0}(\dd y) - \frac{\delta_{x}}{N}\right) \phi'(x, y)\right]
\end{align}
\end{linenomath}
the latter bound being finite by~\eqref{eq:conv-cons-prob}. 
Setting \[C:= \frac{\sqrt{c_{J}}}{2} \limsup_{N \to \infty} \mathbb{E}_{N}\left[\int_{E \times E} \bar{\mathbf{L}}_{0}(\dd x) \left(\bar{\mathbf{L}}_{0}(\dd y) - \frac{\delta_{x}}{N}\right)\phi'(x, y)\right]\] concludes the proof of~\eqref{eq:tight-goal}. 
\end{proof}

%%%%%
\subsubsection{Concentration of trajectories: proof of Item~\ref{item:conv-to-flory} of Theorem~\ref{spat_flory}} \label{sec:accumulation}
Assume, now, that Assumption~\ref{ass:conv-to-flory} is satisfied. This implies that Assumption~\ref{ass:tightness} is satisfied, so that, by Item~\ref{item:tightness} of Theorem~\ref{spat_flory}, the sequence of measures $(\mathbb{P}_{N})_{N \in \mathbb{N}}$ is tight. 

Let $\mathbb{P}^{*}$ denote an accumulation point of $(\mathbb{P}_{N})_{N \in \mathbb{N}}$, and assume, by passing to a subsequence, and re-indexing, that $\mathbb{P}_{N} \rightarrow \mathbb{P}^{*}$ with respect to the weak topology on the space of measures on $D([0, \infty); \mathcal{E})$. Following our previous notation, we denote by $\bar{\mathbf{L}}^{(N)}$ and $\bar{\mathbf{L}}^{*}$ random trajectories sampled from these distributions. Mostly out of convenience of notation, applying the Skorokhod representation theorem~\cite[Theorem~IV.13, page 71]{pollard},\footnote{Noting that as a tight probability measure on a metric space, $\mathbb{P}^{*}$ concentrates on a separable set.} we assume that $(\bar{\mathbf{L}}^{(N)})_{N \in \mathbb{N}}$ converges to $\bar{\mathbf{L}}^{*}$ pointwise for all $\omega \in \Omega$ with respect to the Skorokhod topology on $D([0, \infty); \mathcal{E})$ on some enlarged probability space $(\Omega, \mathscr{F}, \mathbb{P}(\cdot))$. For the rest of the section, we use the notation $\E{\cdot}$ to denote expectations with respect to this enlarged probability space. 
This allows us draw conclusions about the limiting trajectory $\bar{\mathbf{L}}^{*}$, and thus the limiting measure $\mathbb{P}_{*}$, more easily. We have the following proposition:  
\begin{prop} \label{prop:weak-conv-margin}
    For any $t \in [0, \infty)$ we have $\bar{\mathbf{L}}^{(N)}_t \rightarrow \bar{\mathbf{L}}^{*}_t$ almost surely in the weak topology. In addition, $\bar{\mathbf{L}}^{(N)}_t \otimes \bar{\mathbf{L}}^{(N)}_t\rightarrow \bar{\mathbf{L}}^{*}_t\otimes \bar{\mathbf{L}}^{*}_t$ almost surely in the weak topology, where the symbol  $\otimes$ denotes the product measure on the space $(\Omega, \mathscr{F}, \mathbb{P}(\cdot))$.
\end{prop}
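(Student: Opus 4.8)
The plan is to bootstrap the Skorokhod convergence $\bar{\mathbf{L}}^{(N)} \to \bar{\mathbf{L}}^{*}$ on the coupled space $(\Omega,\mathscr{F},\mathbb{P})$ into convergence of the time-$t$ marginals and their self-products, for \emph{every} $t \geq 0$ and in the \emph{weak} topology. The initial observation is that each coagulation changes the configuration by the signed measure $(\delta_{z}-\delta_{x}-\delta_{y})/N$, which has total variation $3/N$; since the metric $d$ inducing the vague topology may be taken dominated by the total variation norm (built from a separating family of unit-sup-norm functions in $C_{c}(E)$), every jump of $\bar{\mathbf{L}}^{(N)}$ has size at most $3/N$ in $d$. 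Hence $\sup_{t \leq T} d\big(\bar{\mathbf{L}}^{(N)}_{t}, \bar{\mathbf{L}}^{(N)}_{t-}\big) \to 0$ for each $T$, and because a Skorokhod limit whose approximants have asymptotically vanishing jumps is necessarily continuous, $\bar{\mathbf{L}}^{*}$ has continuous paths, $\mathbb{P}$-a.s. Skorokhod convergence to a continuous limit upgrades to locally uniform convergence, so for \emph{every} $t$ we obtain $\bar{\mathbf{L}}^{(N)}_{t} \to \bar{\mathbf{L}}^{*}_{t}$ in $d$, i.e.\ \emph{vaguely}, $\mathbb{P}$-a.s.

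Next I would tensorize: vague convergence of $\bar{\mathbf{L}}^{(N)}_{t}$ to $\bar{\mathbf{L}}^{*}_{t}$ gives $\bar{\mathbf{L}}^{(N)}_{t} \otimes \bar{\mathbf{L}}^{(N)}_{t} \to \bar{\mathbf{L}}^{*}_{t} \otimes \bar{\mathbf{L}}^{*}_{t}$ vaguely on $E \times E$, by testing against product functions $f \otimes g$ with $f,g \in C_{c}(E)$, which are uniformly dense in $C_{c}(E \times E)$; moreover, exactly as in the relative-compactness argument in the proof of Lemma~\ref{lem:tight}, the vanishing of the diagonal correction $\delta_{x}/N$ forces $\iota(\bar{\mathbf{L}}^{(N)}_{t})$ and $\bar{\mathbf{L}}^{(N)}_{t}\otimes\bar{\mathbf{L}}^{(N)}_{t}$ to share this vague limit. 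Now I invoke the compact containment from the proof of Lemma~\ref{lem:tight}: by item~\ref{item:ass-doubl-cons-compact} of Assumption~\ref{ass:tightness} the set $\mathcal{E}^{*}_{c_{1}}$ is weakly compact (weakly closed, and uniformly tight with bounded mass by Markov's inequality applied to $\phi''$), and by the monotonicity of Lemma~\ref{lem:conservation}, Equation~\eqref{eq:conservation-2}, once $\iota(\bar{\mathbf{L}}^{(N)}_{0}) \in \mathcal{E}^{*}_{c_{1}}$ the whole image trajectory $\iota(\bar{\mathbf{L}}^{(N)}_{t})$ stays in $\mathcal{E}^{*}_{c_{1}}$. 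Consequently $\{\bar{\mathbf{L}}^{(N)}_{t}\otimes\bar{\mathbf{L}}^{(N)}_{t}\}_{N}$ is, $\mathbb{P}$-a.s., relatively compact in the weak topology; since any weak subsequential limit is a fortiori a vague limit, and vague limits are unique on the $\sigma$-compact space $E \times E$, every such limit equals $\bar{\mathbf{L}}^{*}_{t}\otimes\bar{\mathbf{L}}^{*}_{t}$. This yields weak convergence of the products, the second assertion.

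The first assertion then follows from the second. Testing the products against $1 \otimes 1 \in C_{b}(E \times E)$ gives $\|\bar{\mathbf{L}}^{(N)}_{t}\|^{2} \to \|\bar{\mathbf{L}}^{*}_{t}\|^{2}$, whence $\|\bar{\mathbf{L}}^{(N)}_{t}\| \to \|\bar{\mathbf{L}}^{*}_{t}\|$ (all total masses are bounded by $c'$ since the cluster count is non-increasing). Testing against $f \otimes 1$ with $f \in C_{b}(E)$ gives $\langle f, \bar{\mathbf{L}}^{(N)}_{t}\rangle \, \|\bar{\mathbf{L}}^{(N)}_{t}\| \to \langle f, \bar{\mathbf{L}}^{*}_{t}\rangle \, \|\bar{\mathbf{L}}^{*}_{t}\|$, and dividing (when $\|\bar{\mathbf{L}}^{*}_{t}\| > 0$; the case $\|\bar{\mathbf{L}}^{*}_{t}\| = 0$ is immediate as then $\|\bar{\mathbf{L}}^{(N)}_{t}\| \to 0$) yields $\langle f, \bar{\mathbf{L}}^{(N)}_{t}\rangle \to \langle f, \bar{\mathbf{L}}^{*}_{t}\rangle$, i.e.\ weak convergence of the marginals.

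\textbf{Main obstacle.} The crux is the passage from vague to weak convergence, i.e.\ ruling out escape of mass to infinity, which is exactly what the weak compactness of $\mathcal{E}^{*}_{c_{1}}$ supplies. The remaining technical care lies in transferring the uniform-in-$N$ containment to the coupled space so that it holds $\mathbb{P}$-almost surely: here one combines the quantitative estimate $\liminf_{N}\mathbb{P}_{N}\big(\forall t \geq 0 :\ \bar{\mathbf{L}}^{(N)}_{t} \in B_{c_{1}}\big) \geq 1 - c_{0}/c_{1}$ from the proof of Lemma~\ref{lem:tight} with the monotonicity~\eqref{eq:conservation-2} and the Skorokhod coupling, letting $c_{1} \to \infty$ to obtain a full-probability event on which the products are relatively weakly compact along the sequence.
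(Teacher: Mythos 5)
Your architecture differs from the paper's in two ways, and much of it is sound: the vanishing-jump argument (each coagulation changes $\bar{\mathbf{L}}^{(N)}$ by $(\delta_{z}-\delta_{x}-\delta_{y})/N$, of total variation $3/N$, so any Skorokhod limit point has continuous paths and the convergence upgrades to locally uniform) is a clean, correct substitute for the paper's route, which instead deduces a.s.\ continuity of $t\mapsto\tilde{J}(\bar{\mathbf{L}}^{*}_{t})$ from the martingale modulus estimate~\eqref{eq:tight-goal}; and recovering the marginals from the products by testing against $1\otimes 1$ and $f\otimes 1$ is a nice trick. However, you prove the \emph{product} assertion first and the marginal assertion second, and this reversal is exactly what forces you into a step that does not work.

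The gap is the final claim that one can ``let $c_{1}\to\infty$ to obtain a full-probability event on which the products are relatively weakly compact along the sequence.'' Set $Y_{N}:=\int_{E\times E}\phi''(x,y)\,\bar{\mathbf{L}}^{(N)}_{0}(\dd x)\bigl(\bar{\mathbf{L}}^{(N)}_{0}(\dd y)-\tfrac{\delta_{x}}{N}\bigr)$. By the monotonicity of Lemma~\ref{lem:conservation}, what you need is that, almost surely on the coupled space, $\limsup_{N}Y_{N}<\infty$, so that the whole tail of the sequence lies in one fixed compact set $\mathcal{E}^{*}_{c_{1}}$. But the only uniform-in-$N$ input is~\eqref{eq:conv-cons-prob}, i.e.\ $\limsup_{N}\mathbb{E}_{N}[Y_{N}]\leq c_{0}$. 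Markov's inequality controls each $N$ separately, $\mathbb{P}(Y_{N}>c_{1})\leq c_{0}/c_{1}$; the exceptional events depend on $N$, their probabilities do not sum, and Fatou's lemma bounds $\liminf_{N}Y_{N}$, never $\limsup_{N}Y_{N}$. Bounded expectations are perfectly compatible with $\limsup_{N}Y_{N}=+\infty$ almost surely (e.g.\ independent $Y_{N}$ with $\mathbb{P}(Y_{N}=N)=1/N$). So your argument actually yields only: for a.e.\ $\omega$ there is an $\omega$-dependent subsequence along which the products stay in a compact set and hence converge weakly — equivalently, weak convergence of the products \emph{in probability}. That is strictly weaker than the almost-sure statement of Proposition~\ref{prop:weak-conv-margin}, and since your first assertion is deduced from the second, the gap propagates to the whole proposition.

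The paper avoids this by doing things in the opposite order: it first proves the marginal convergence (continuity of the limit path, the mass bound $\bar{\mathbf{L}}^{*}_{t}(E)\leq\|\mu\|$ obtained from the non-increase of $\|\bar{\mathbf{L}}^{(N)}_{t}\|$ and lower semicontinuity of mass under vague limits, and continuity of the projection $\pi_{t}$ at continuity points of the limit), and only then treats the products; at that stage no uniform-in-$N$ containment is needed, because a weakly convergent sequence together with its limit is automatically tight, and every accumulation point is identified by the already-established marginal limit (equivalently, weak convergence of finite positive measures tensorizes). If you re-order your proof the same way — use your vanishing-jump argument to get the marginal convergence first, then tensorize — the compact-containment step you cannot justify simply disappears.
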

\begin{proof}[Proof of Proposition~\ref{prop:weak-conv-margin}]
The proof is the result of the following observations:
\begin{enumerate}[label = (\Roman*)]
\item First note that, for any $J \in C_{c}(E; \mathbb{R})$, the operator $\tilde{J}: D([0, \infty); \mathcal{E}) \rightarrow D([0, \infty); \mathbb{R})$ is continuous, as the function $\tilde{J}:\mathcal{E} \rightarrow \mathbb{R}$ defined by $\tilde{J}(u) = \langle J, u \rangle$ is continuous (see for example,~\cite[Theorem~4.3]{jakubowski}). This implies that, if $\tilde{J}(\bar{\mathbf{L}}^{(N)})$  denotes the map $t \mapsto \tilde{J}(\bar{\mathbf{L}}^{(N)}_{t})$, for any $J \in C_{c}(E; \mathbb{R})$, we have $\tilde{J}(\bar{\mathbf{L}}^{(N)}) \rightarrow \tilde{J}(\bar{\mathbf{L}}^{*})$ almost surely in $D([0, \infty); \mathbb{R})$.
\item Applying~\eqref{eq:tight-goal}, and observing that $\sup_{s, t \in [0,T], |s-t| \leq \eta} \left|\tilde{J}(\bar{\mathbf{L}}_t) - \tilde{J}(\bar{\mathbf{L}}_s)\right|$ is monotone decreasing in $\eta$, we have
\begin{equation}
\lim_{N \to \infty} \mathbb{E}\left[\lim_{\eta\to 0}\sup_{s, t \in [0,T], |s-t| \leq \eta} \left|\tilde{J}(\bar{\mathbf{L}}_t) - \tilde{J}(\bar{\mathbf{L}}_s)\right|\right] =0.
\end{equation}
In addition, one may readily verify that, for any $T \in [0,\infty)$ the functional  
\begin{equation}\label{cont_fct}
x \mapsto \lim_{\eta \to 0} \sup_{s, t \in [0,T], |s-t| \leq \eta} \left\|x(t) - x(s)\right\|
\end{equation}
is a continuous functional with respect to the Skorokhod topology. Consequentially, by bounded convergence,
\begin{linenomath}
\begin{align*} 
0 &= \E{\lim_{N \to \infty} \lim_{\eta \to 0}\sup_{s, t \in [0,T], |s-t| \leq \eta } \left|\tilde{J}(\bar{\mathbf{L}}^{(N)}_t) - \tilde{J}(\bar{\mathbf{L}}^{(N)}_{s})\right|}\\ &\stackrel{\eqref{cont_fct}}{=}  \E{\lim_{\eta \to 0}\sup_{s, t \in [0,T], |s-t| \leq \eta } \left|\tilde{J}(\bar{\mathbf{L}}^{*}_t) - \tilde{J}(\bar{\mathbf{L}}^{*}_{s})\right|}
\end{align*}
\end{linenomath}
for any $J \in C_{c}(E; \mathbb{R})$, where in the final equality we have used the continuity of $\eqref{cont_fct}$. Therefore,  the function $\tilde{J}(\mathbf{\bar{L}}^{*}): [0, \infty) \rightarrow \mathbb{R}$ such that $t \mapsto \tilde{J}(\mathbf{\bar{L}}^{*}_{t})$ is almost surely continuous (i.e., $\tilde{J}(\mathbf{\bar{L}}^{*})~\in~C([0, \infty), \mathbb{R})$ almost surely). 
\item This continuity implies that for any sequence $(t_{n})_{n \in \mathbb{N}}$ such that $t_{n} \rightarrow t$, 
for any $J \in C_{c}(E; \mathbb{R})$ we have 
\[
\tilde{J}(\mathbf{\bar{L}}^{*}_{t_{n}}) = \int_{E} J(x) \mathbf{\bar{L}}^{*}_{t_{n}} (\dd x) \rightarrow \int_{E} J(x) \mathbf{\bar{L}}^{*}_{t} (\dd x) \quad \text{almost surely}.
\]
But, since, by assumption on the initial condition we have $\bar{\mathbf{L}}^{*}_{0} = \mu$, where $\mu$ denotes the limiting measure from~\eqref{eq:limit-init-cond}, and the dynamics of the process ensure that $\|\bar{\mathbf{L}}^{(N)}_{t}\|$ is non-increasing for each $N$, we readily verify that each for each $t \in [0, \infty)$ we have $\mathbf{\bar{L}}^{*}_{t}(E) \leq \|\mu\|$. Thus, by approximating any $F \in C_{b}(E; \mathbb{R})$ by compactly supported functions, for any $F \in C_{b}(E; \mathbb{R})$ we have 
\[
\int_{E} F(x) \mathbf{\bar{L}}^{*}_{t_{n}}(\dd x) \rightarrow \int_{E} F(x) \mathbf{\bar{L}}^{*}_{t} (\dd x) \quad \text{almost surely}.
\]
This implies that $\bar{\mathbf{L}}^{*}$ is, almost surely,  a continuous trajectory of measures, i.e., $\bar{\mathbf{L}}^{*} \in C([0,\infty); \mathcal{E})$.
\item It is well-known that in a Skorokhod space the projection map $\pi_{t}: D([0, \infty); E) \rightarrow E$ is a continuous functional at any trajectory $x \in D([0, \infty); E)$ for which $t$ is a continuity point. Since every $t \in [0, \infty)$ is a continuity point of $\bar{\mathbf{L}}^{*}_{t}$, this implies %\ti{using an argument with the triangle inequality} 
that for any $t \in [0, \infty)$ we have $\bar{\mathbf{L}}^{(N)}_{t} \rightarrow \bar{\mathbf{L}}^{*}_{t}$ almost surely in the weak topology, as required. Now, by a similar approach to the proof of Lemma~\ref{lem:tight}, the family of measures $\left\{\bar{\mathbf{L}}^{(N)}_{t} \otimes \bar{\mathbf{L}}^{(N)}_{t}, N \in \mathbb{N}\right\}$, is (almost surely) tight, and by assumption uniformly bounded in norm, thus almost surely relatively compact by~\cite[Theorem~IV.29, page 82]{pollard}; and any accumulation point must be $\bar{\mathbf{L}}^{*}_{t} \otimes \bar{\mathbf{L}}^{*}_{t}$. Thus,  
$\bar{\mathbf{L}}^{(N)}_{t} \otimes \bar{\mathbf{L}}^{(N)}_{t} \rightarrow \bar{\mathbf{L}}^{*}_{t} \otimes \bar{\mathbf{L}}^{*}_{t}$ almost surely. 
\end{enumerate}
\end{proof}

%\subsubsection*{Proof of Theorem~\ref{spat_flory}}
Now we are ready to prove Item~\ref{item:conv-to-flory} of Theorem~\ref{spat_flory}. Parts of the proof rely on equations from the proof of Lemma~\ref{lem:tight}, hence we recommend that the reader be acquainted with this proof first.
\begin{proof}[Proof of Item~\ref{item:conv-to-flory} of Theorem~\ref{spat_flory}]
%Tightness directly follows since the hypothesis of Lemma~\ref{lem:tight} are satisfied. We then proceed to prove the second part of the statement.  \\
%\textbf{Decomposition of the integrand function.}
%As point 1. of the statement is a direct consequence of Lemma~\ref{lem:tight}, we proceed with the proof of point 2.
In order to simplify some expressions, we make some shorthands. Recall that for any compactly supported $J \in C_{c}(E; \mathbb{R})$, we denote by $\tilde{J}$ the functional such that
\[
\tilde{J}(\bar{\mathbf{L}}^{(N)}_{s}) = \langle J, \bar{\mathbf{L}}^{(N)}_{s} \rangle = \int_{E} \bar{\mathbf{L}}^{(N)}_{s}(\dd x) J(x).
\]
We also define the following functionals:
\begin{linenomath}
\begin{align} \label{G+}
 %& 
 G^{+}(\bar{\mathbf{L}}^{(N)}_{s}, J) 
% \\ & \mspace{10mu} 
 :=  \frac{1}{2}\int_{E \times E \times E} \bar{\mathbf{L}}^{(N)}_s(\dd x) \bar{\mathbf{L}}^{(N)}_s(\dd y) K(x, y, \dd z)J(z).
\end{align}
\end{linenomath}
We recall that, by assumption, there exists a continuous function $\phi:E \times E\mapsto \mathbb{R}^{+}$  that satisfies Equations~\eqref{eq:good-kernels}, \eqref{eq:lim-cons-kern} and~\eqref{eq:conv-cons-prob}. We then define $\hat{G}$ by
\begin{linenomath}
\begin{align}\label{Ghat}
%& 
\hat{G}(\bar{\mathbf{L}}^{(N)}_{s},J) :=
%\\ & \mspace{10mu} :=
\int_{E \times E}\bar{\mathbf{L}}^{(N)}_s(\dd x) \bar{\mathbf{L}}^{(N)}_s(\dd y) \left[\bar{K}(x, y)  - \phi(x, y)\right]J(y).
\end{align}
\end{linenomath}
Finally, we define the functional 
%$f_0^{(N)}: E \rightarrow \mathbb{R}$ such that
%\[f^{(N)}_0(y) := \left\langle \phi(\cdot, y), \bar{\mathbf{L}}^{(N)}_0 \right\rangle = \int_{E}\bar{\mathbf{L}}^{(N)}_0(\dd x) \phi(x, y),\]
%with $f_{0}(y) $
%and define the functional
\begin{linenomath}
\begin{align}\label{H}
H(\bar{\mathbf{L}}^{(N)}_s, J) &:=  \int_{E}\bar{\mathbf{L}}^{(N)}_s(\dd y) \left\langle \phi(\cdot, y), \bar{\mathbf{L}}^{(N)}_0 \right\rangle J(y)= \int_{E \times E}\bar{\mathbf{L}}^{(N)}_s(\dd y)\bar{\mathbf{L}}^{(N)}_0(\dd x) \phi(x, y) J(y).
\end{align}
\end{linenomath}
We now have the following claim:
\begin{clm} \label{clm:compact-test-equation}
Almost surely, for any $t \in [0, \infty)$, $J \in C_{c}(E; \mathbb{R})$ we have 
\begin{equation} \label{eq:compact-test-equation}
\tilde{J}(\bar{\mathbf{L}}^{*}_t) - \tilde{J}(\init) = \int_{0}^{t}  G^{+}(\bar{\mathbf{L}}^{*}_s, J) - \hat{G}(\bar{\mathbf{L}}^{*}_s, J) - H(\bar{\mathbf{L}}^{*}_s, J) \, \dd s. 
\end{equation}
\end{clm}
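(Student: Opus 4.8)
The plan is to pass to the limit $N\to\infty$ in the pre-limit martingale identity \eqref{MG}, after re-expressing the generator term $\mathcal{A}_N\tilde{J}(\bar{\mathbf{L}}_s)$ through the three functionals $G^{+},\hat{G},H$. First I would write \eqref{MG} as
\begin{equation*}
\tilde{J}(\bar{\mathbf{L}}^{(N)}_t)-\tilde{J}(\bar{\mathbf{L}}^{(N)}_0)=\int_0^t \mathcal{A}_N\tilde{J}(\bar{\mathbf{L}}^{(N)}_s)\,\dd s+M_N(t),
\end{equation*}
and, using \eqref{eq:expl-def-J}, split the generator into its $J(z)$ part and its $-(J(x)+J(y))$ part. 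Modulo the $\delta_x/N$ correction the first part is exactly $G^{+}(\bar{\mathbf{L}}^{(N)}_s,J)$; for the second I would use the symmetry of $\bar{K}$ to rewrite $\tfrac12\bar{K}(x,y)(J(x)+J(y))$ as $\bar{K}(x,y)J(y)$ under the symmetric product measure, then decompose $\bar{K}=(\bar{K}-\phi)+\phi$. The term carrying $\bar{K}-\phi$ is $\hat{G}(\bar{\mathbf{L}}^{(N)}_s,J)$, while the term carrying $\phi$ is treated by the \emph{exact} conservation identity of Lemma~\ref{lem:conservation} (the argument proving \eqref{eq:conservation-1} holds with equality here since $\phi$ is conservative in its first argument), which lets me replace $\langle\phi(\cdot,y),\bar{\mathbf{L}}^{(N)}_s\rangle$ by $\langle\phi(\cdot,y),\bar{\mathbf{L}}^{(N)}_0\rangle$ and so recover precisely $H(\bar{\mathbf{L}}^{(N)}_s,J)$. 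This yields, at finite $N$,
\begin{equation*}
\mathcal{A}_N\tilde{J}(\bar{\mathbf{L}}^{(N)}_s)=G^{+}(\bar{\mathbf{L}}^{(N)}_s,J)-\hat{G}(\bar{\mathbf{L}}^{(N)}_s,J)-H(\bar{\mathbf{L}}^{(N)}_s,J)+\mathcal{R}_N(s),
\end{equation*}
where $\mathcal{R}_N(s)$ collects the $O(1/N)$ diagonal corrections coming from $\delta_x/N$. It is precisely this pinning of $\langle\phi(\cdot,y),\bar{\mathbf{L}}_s\rangle$ to its initial value that makes the loss-to-gel quantity $g_\infty$ appear in the limit.

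It then remains to take $N\to\infty$ term by term. The left-hand side converges to $\tilde{J}(\bar{\mathbf{L}}^{*}_t)-\tilde{J}(\mu)$ by Proposition~\ref{prop:weak-conv-margin} and the initial-condition hypothesis $\bar{\mathbf{L}}^{(N)}_0\to\mu$; the martingale satisfies $\mathbb{E}[\sup_{t\le T}|M_N(t)|]\to 0$ by the bound \eqref{eq:bound2-tight}, so $M_N(t)\to 0$; and $\int_0^t\mathcal{R}_N(s)\,\dd s\to 0$ since $J$ has compact support and the total mass is uniformly bounded. The substance lies in the convergence of the integral functionals. For fixed $s$: for $H$ I would combine the uniform convergence $\sup_{y\in C'}|\langle\phi(\cdot,y),\bar{\mathbf{L}}^{(N)}_0\rangle-\langle\phi(\cdot,y),\mu\rangle|\to 0$ from \eqref{eq:conv-probab} (with $C'=\operatorname{supp}J$) with the weak convergence $\bar{\mathbf{L}}^{(N)}_s\to\bar{\mathbf{L}}^{*}_s$ and continuity of $y\mapsto\langle\phi(\cdot,y),\mu\rangle$ on $C'$; for $G^{+}$, mass conservation and the compact support of $J$ confine the integrand $(x,y)\mapsto\int_E K(x,y,\dd z)J(z)$ to the region where $m(x)+m(y)$ is bounded, where by the continuity hypotheses on the kernel it is bounded and continuous, so that the weak convergence $\bar{\mathbf{L}}^{(N)}_s\otimes\bar{\mathbf{L}}^{(N)}_s\to\bar{\mathbf{L}}^{*}_s\otimes\bar{\mathbf{L}}^{*}_s$ of Proposition~\ref{prop:weak-conv-margin} applies. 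To promote pointwise-in-$s$ convergence to convergence of the time integrals I would invoke bounded convergence, the integrands being dominated uniformly in $N$ and $s$ by the conserved bound $\int_{E\times E}\phi'(x,y)\,\bar{\mathbf{L}}^{(N)}_s(\dd x)\bar{\mathbf{L}}^{(N)}_s(\dd y)\le\int_{E\times E}\phi'\,\bar{\mathbf{L}}^{(N)}_0\bar{\mathbf{L}}^{(N)}_0$, which is finite by \eqref{eq:conv-cons-prob}.

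The main obstacle is the convergence of $\hat{G}$, whose integrand $[\bar{K}(x,y)-\phi(x,y)]J(y)$ is supported in $E\times C'$ but unbounded as $x\to\infty$, hence not directly accessible via weak convergence. Here I would split along the exhausting sets $(C_k)$: on $C_k\times C'$ the integrand is bounded and continuous and weak convergence of the product measures applies, while on $C_k^{c}\times C'$ the tail is controlled by Item~\ref{item:bounded_compact} of Assumption~\ref{ass:conv-to-flory} --- either the uniform bound \eqref{bounded-outside-compact} on $|\bar{K}-\phi|$ together with the uniform tightness/mass bound coming from the coercivity of $\phi''$, or the comparison \eqref{eq:lim-cons-kern} giving $|\bar{K}-\phi|\le\varepsilon_k\,\phi^{*}$ on $C_k^{c}\times C'$ with $\varepsilon_k\to 0$, combined with the conserved bound $\int_{E\times E}\phi^{*}\,\bar{\mathbf{L}}^{(N)}_s\bar{\mathbf{L}}^{(N)}_s\le C$ that is uniform in $N$ and $s$. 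Sending first $N\to\infty$ and then $k\to\infty$ makes both tail contributions vanish. Finally, since both sides of \eqref{eq:compact-test-equation} are continuous in $t$ (the limit trajectory being continuous by Proposition~\ref{prop:weak-conv-margin}) and linear-continuous in $J$, it suffices to establish the identity for rational $t$ and $J$ in a countable dense subset of $C_c(E;\mathbb{R})$, whence the full almost-sure statement follows by continuity.
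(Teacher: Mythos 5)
Your proposal follows essentially the same route as the paper's proof: the martingale identity \eqref{MG}, the symmetrisation and decomposition $\bar K = (\bar K - \phi) + \phi$ combined with the exact conservation of $\langle \phi(\cdot,y), \bar{\mathbf{L}}^{(N)}_s\rangle$ (producing $H$ plus the $O(1/N)$ diagonal correction, the paper's $\mathcal{E}_N$), the vanishing of the martingale via \eqref{eq:bound2-tight}, and the truncation of $\hat G$ along the sets $C_k$ with tail control from \eqref{bounded-outside-compact} or \eqref{eq:lim-cons-kern} together with the conserved $\phi^{*}$ bound. The differences are only presentational: the paper organises the limit passage as $L^{1}$ estimates (Claim~\ref{clm:compact-test-equation-flory} and the chain \eqref{eq:trunc-works}) rather than a.s.\ term-by-term limits, and your closing countable-dense-set argument makes explicit a quantifier exchange that the paper leaves implicit.
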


Note that the truth of Equation~\eqref{eq:compact-test-equation} for any $J \in C_{c}(E; \mathbb{R})$ implies that, almost surely, $(\bar{\mathbf{L}}^{*}_t)_{t\geq 0}$ satisfies~\eqref{eq:flory-test-function} in Definition~\ref{weak_sol}. Now, note that, as an application of~Proposition~\ref{prop:weak-conv-margin}, for any $t \geq 0$ we have
\begin{equation} \label{eq:weak-conv-without-diagonal}
\bar{\mathbf{L}}^{(N)}_{t} (\dd x) \left(\bar{\mathbf{L}}^{(N)}_{t} (\dd y)-\frac{\delta_x}N\right)\rightarrow \bar{\mathbf{L}}^{*}_{t} \otimes \bar{\mathbf{L}}^{*}_{t}
\end{equation}
almost surely in the weak topology. Recall that we have $\bar{K} \leq \phi'$, where $\phi'$ is doubly sub-conservative and continuous by the first assumption of Theorem~\ref{spat_flory}, and $\phi$ (which is also continuous, and $\phi'$ both satisfy~\eqref{eq:conv-cons-prob}. Thus, exploiting weak convergence, and Lemma~\ref{lem:conservation}, we 
deduce that $(\bar{\mathbf{L}}^{*}_t)_{t\geq 0}$ also satisfies Equations~\eqref{eq:flory-part-two} and~\eqref{eq:annoying-cons-assumption}, thus is a solution of the multi-type Flory equation in the sense given by Definition~\ref{weak_sol}. Hence the claim completes the proof of the theorem. 
\end{proof}
It thus suffices to prove the claim. 
\begin{proof}[Proof of Claim~\ref{clm:compact-test-equation}]
    First note that by recalling the martingale from Equation~\eqref{MG}, together with the bound from %Equations~\eqref{eq:compact-support} and
    ~\eqref{eq:bound2-tight} in the proof of Lemma~\ref{lem:tight}, we obtain
\begin{linenomath}
\begin{align}\label{eq:mart-zero}
&\lim_{N \to \infty} \mathbb{E}\Big[\Big|\tilde{J}(\bar{\mathbf{L}}^{(N)}_t) - \tilde{J}(\bar{\mathbf{L}}^{(N)}_0) \\
&- \frac{1}{2}\int_{0}^{t}\dd s \int_{E \times E \times E}\bar{\mathbf{L}}^{(N)}_s(\dd x)\left(\bar{\mathbf{L}}^{(N)}_s(\dd y)-\frac{\delta_x}N\right) K(x, y, \dd z) 
%\\ &\mspace{50mu} 
\times \left(J(z) - J(y) - J(x)\right)\Big|\Big]\\
&=0.
\end{align}
\end{linenomath}
%Note also, that since by Equation~\eqref{eq:dom-cons} there exists a conservative function, $\phi$,  such that
%\begin{align*}
%& \int_{(\mathcal{S} \times \mathbb{N})^2} \bar{\mathbf{L}}^{(N)}_s(\dd x) \bar{\mathbf{L}}^{(N)}_s(\dd y) K(x, y) 
%\\ & \hspace{2cm} \leq \int_{(\mathcal{S} \times \mathbb{N})^2} \bar{\mathbf{L}}^{(N)}_s(\dd x) \bar{\mathbf{L}}^{(N)}_s(\dd y) \phi(x, y) \\ & \hspace{2cm} = \int_{(\mathcal{S} \times \mathbb{N})^2} \bar{\mathbf{L}}^{(N)}_0(\dd x) \bar{\mathbf{L}}^{(N)}_0(\dd y) \phi(x, y), 
%\end{align*}
%and, for all $N$ sufficiently large, the expectation of the last display is finite by Equation~\eqref{eq:mart-zero} is almost surely bounded, and thus by bounded convergence we may always pass the limit inside the expectation. 
%We seek now to pass the limit inside the expectation
Now, we define 
\begin{linenomath}
\begin{align} \label{eq:g-minus}
G_N^{-}(\bar{\mathbf{L}}^{(N)}_{s}, J) & := \frac{1}{2}\int_{E \times E} \bar{\mathbf{L}}^{(N)}_s(\dd x) \left(\bar{\mathbf{L}}^{(N)}_s(\dd y) -\frac{\delta_x}N\right)\bar{K}((x, y)(J(x) + J(y)),\\
 G^{+}_N(\bar{\mathbf{L}}^{(N)}_{s}, J) &
 :=  \frac{1}{2}\int_{E \times E \times E} \bar{\mathbf{L}}^{(N)}_s(\dd x) \left(\bar{\mathbf{L}}^{(N)}_s(\dd y) -\frac{\delta_x}N\right) K(x, y, \dd z)J(z),\\
\hat{G}_N(\bar{\mathbf{L}}^{(N)}_{s},J) &:=
\int_{E \times E}\bar{\mathbf{L}}^{(N)}_s(\dd x)  \left(\bar{\mathbf{L}}^{(N)}_s(\dd y) -\frac{\delta_x}N\right)\left[\bar{K}(x, y)  - \phi(x, y)\right]J(y).
%\\ & = \int_{E \times E} \bar{\mathbf{L}}^{(N)}_s(\dd x) \bar{\mathbf{L}}^{(N)}_s(\dd y) \bar{K}(x, y) J(y),
\end{align}
\end{linenomath}
%and $H_N(\bar{\mathbf{L}}^{(N)}_s, J)\colon = G_N^{-}(\bar{\mathbf{L}}^{(N)}_{s}, J)-\hat{G}_N(\bar{\mathbf{L}}^{(N)}_{s},J)$.
%where the second equality follows from the symmetry of $\bar{K}$. 
We may thus re-write the inner integral appearing in Equation~\eqref{eq:mart-zero} as $G_N^{+}(\bar{\mathbf{L}}^{(N)}_s, J) - G_N^{-}(\bar{\mathbf{L}}^{(N)}_s, J)$, so that
\begin{equation} \label{eq:mart-zero-gpm}
 \lim_{N \to \infty}\mathbb{E}\left[\left|\tilde{J}(\bar{\mathbf{L}}^{(N)}_t) - \tilde{J}(\bar{\mathbf{L}}^{(N)}_0) -
\int_{0}^{t} G_N^{+}(\bar{\mathbf{L}}^{(N)}_{s}, J) - G^{-}_N(\bar{\mathbf{L}}^{(N)}_{s}, J) \, \dd s \right| \right] = 0.
\end{equation}
%The above is true because we can add to $\tilde{J}(\bar{\mathbf{L}}^{(N)}_t) - \tilde{J}(\bar{\mathbf{L}}^{(N)}_0) -\int_{0}^{t} G^{+}(\bar{\mathbf{L}}^{(N)}_{s}, J) - G^{-}(\bar{\mathbf{L}}^{(N)}_{s}, J) \, \dd s $ the term \[\pm \frac 1{2N} \int_0^t \dd s\int_{E\times E}\bar{\mathbf{L}}^{(N)}_s(\dd x) K(x, x, \dd z) \left(J(z) - 2J(x)\right).\]

Now, we seek to exploit the convergence of $\bar{\mathbf{L}}^{(N)}_s$ to $\bar{\mathbf{L}}^{*}_s$, but note that as the integrand appearing in Equation~\eqref{eq:g-minus} is in general unbounded, and $G^{-}$ is, in general, not continuous. However, it is possible to show that this functional coincides with a continuous functional on the trajectories $s \mapsto \bar{\mathbf{L}}^{(N)}_s$. %Indeed, observe that, for all $(y, n) \in \mathcal{S} \times \mathbb{N}$, $s \in [0, \infty)$, the quantity 
%\[\int_{\mathcal{S} \times [N]}\bar{\mathbf{L}}^{(N)}_s(\dd x) \phi(x, y) = f_0y, \]
%i.e., this integral is fixed for all $s \in [0, \infty)$.  
%Indeed, if $\tau_{1} < \tau_{2}$ denote times of two consecutive coagulation events, with $\tau_{2}$ involving the coagulation of clusters $(x_1, m_1)$ and $(x_{2}, m_{2})$, say, by definition, we have 
%\begin{equation}
%\begin{aligned}
%& \int_{\mathcal{S} \times \mathbb{N}}\left(\bar{\mathbf{L}}^{(N)}_{\tau_2} - \bar{\mathbf{L}}^{(N)}_{\tau_1}\right)(\dd x) \phi(x, y)
%\\ & \mspace{50mu} = \frac{1}{N}\left(\phi((X((x_1, m_1), (x_2, m_2)), m_1 + m_2), y) - \phi((x_1, m_1), y) - \phi((x_2, m_2), y)\right) = 0,
%\end{aligned}
%\end{equation}
%where the latter equality comes from the definition of $\phi$.
Indeed, since $\phi$ is conservative, by Lemma~\ref{lem:conservation} the quantity $\left\langle \phi(\cdot, y), \bar{\mathbf{L}}^{(N)}_s \right\rangle$ is fixed, %and equal to $f^{(N)}_{0}(y)$ for all $s \in [0, \infty)$, 
by adding and subtracting the term corresponding to $\int_{E \times E}\bar{\mathbf{L}}^{(N)}_s(\dd y)\bar{\mathbf{L}}^{(N)}_0(\dd x) \phi(x, y) J(y)$, we have 
\[
G_N^{-}(\bar{\mathbf{L}}^{(N)}_s, J) = \hat{G}_N(\bar{\mathbf{L}}^{(N)}_s, J) + H(\bar{\mathbf{L}}^{(N)}_s, J)-\mathcal{E}_N(\bar{\mathbf{L}}^{(N)}_s, J), 
\]
with 
\[\mathcal{E}_N(\bar{\mathbf{L}}^{(N)}_s, J):=\frac 1N\int_{E}\phi(x,x)J(x)\bar{\mathbf{L}}^{(N)}_s(\dd x).\]
Thus, re-writing Equation~\eqref{eq:mart-zero-gpm}, we get
\begin{linenomath}
\begin{align} \label{eq:mart-zero-rewrite}
    &\lim_{N \to \infty} \mathbb{E}\bigg[\bigg|\tilde{J}(\bar{\mathbf{L}}^{(N)}_t) - \tilde{J}(\bar{\mathbf{L}}^{(N)}_0) - \int_{0}^{t}  G^{+}_N(\bar{\mathbf{L}}^{(N)}_s, J) - \hat{G}_N(\bar{\mathbf{L}}^{(N)}_s, J) - H(\bar{\mathbf{L}}^{(N)}_s, J) +\mathcal{E}_N(\bar{\mathbf{L}}^{(N)}_s, J) \, \dd s \bigg|\bigg] =0.
\end{align}
\end{linenomath}
Now, in order to complete the proof of Equation~\eqref{eq:compact-test-equation}, we need to argue that we can pass the limit inside the expectation, and exploit weak convergence to replace the terms corresponding to $\bar{\mathbf{L}}^{(N)}$ with $\bar{\mathbf{L}}^{*}$.%Note that, by Equation~\eqref{eq:conv-probab}, we have 
%\begin{equation} \label{eq:conv-H}
%    \lim_{N \to \infty} H(\bar{\mathbf{L}}^{(N)}_s, J) = \lim_{N \to \infty} H(\bar{\mathbf{L}}^{*}_s, J) =  \int_{\mathcal{S}\times \mathbb{N}}\bar{\mathbf{L}}^{(N)}_s(\dd y) f^{(N)}_0yJy
%\end{equation}
We can pass the limit inside if the term $\hat{G}(\bar{\mathbf{L}}^{(N)}_s, J)$ was bounded, and then, need to argue continuity of the operators $\hat{G}$ and $H$.
%We can do both steps if the integrand were bounded, but this not the case for the terms $\hat{G}(\bar{\mathbf{L}}^{(N)}_s, J)$ or $H(\bar{\mathbf{L}}^{(N)}_s, J)$. 
Consequentially, we first approximate the functional $\hat{G}$ by truncations $(\hat{G}^{(k)})_{k \in \mathbb{N}}$, such that, with compact sets as defined in Equation~\eqref{eq:lim-cons-kern}
%for compact sets $C_{k} \subseteq \mathcal{S}$, with $C_{k} \uparrow \mathcal{S}$,  
\begin{equation}
\begin{aligned}
& \hat{G}^{(k)}(\bar{\mathbf{L}}^{(N)}_s, J) := 
%\\ & \mspace{10mu} 
\int_{C_{k}} \bar{\mathbf{L}}^{(N)}_s(\dd x) \int_{E} \bar{\mathbf{L}}^{(N)}_s(\dd y) \left(\bar{K}(x, y)  - \phi(x, y)\right) J(y).   
\end{aligned}
\end{equation}
%where we recall that $K^{\mathrm{sym}}(x, y) = K(x, y) + K(y, x)$, and $\phi^{\mathrm{sym}}$ is defined similarly.
We finish the proof with another claim.
\begin{clm} \label{clm:compact-test-equation-flory}
Almost surely, for $s, t \in [0, \infty)$, $J \in C_{c}(E)$ we have
\begin{equation} \label{eq:goal-trunc-approx-flory1}
\lim_{k \to \infty} \limsup_{N \to \infty}\E{\left|\int_{0}^{t}\hat{G}_N(\bar{\mathbf{L}}^{(N)}_{s}, J) - \hat{G}^{(k)}(\bar{\mathbf{L}}^{(N)}_{s}, J) \, \dd s \right|}= 0, %\quad     \lim_{k \to \infty} \E{\left|\int_{0}^{t} \hat{G}(\bar{\mathbf{L}}^{*}_{s}, J) - \hat{G}^{(k)}(\bar{\mathbf{L}}^{*}_{s}, J) \, \dd s\right|} = 0.
\end{equation}
\begin{equation}
 \label{eq:approx_G_+_N}
\limsup_{N \to \infty}\left(\E{\left|\int_{0}^{t}G^+_N(\bar{\mathbf{L}}^{(N)}_{s}, J) - {G}^+(\bar{\mathbf{L}}^{(N)}_{s}, J) \, \dd s \right|}
%= 0, 
%\quad 
%\lim_{k \to \infty} \E{\left|\int_{0}^{t} \hat{G}(\bar{\mathbf{L}}^{*}_{s}, J) - \hat{G}^{(k)}(\bar{\mathbf{L}}^{*}_{s}, J) \, \dd s\right|} = 0.
%\limsup_{N \to \infty}
+ \E{\left|\int_{0}^{t}\mathcal{E}_N(\bar{\mathbf{L}}^{(N)}_{s}, J) \, \dd s \right|} \right)= 0,
\end{equation}
\begin{equation} \label{eq:goal-trunc-approx-flory2}
    \lim_{k \to \infty} \E{\left|\int_{0}^{t} \hat{G}(\bar{\mathbf{L}}^{*}_{s}, J) - \hat{G}^{(k)}(\bar{\mathbf{L}}^{*}_{s}, J) \, \dd s\right|} = 0,
\end{equation}
and 
\begin{equation} \label{eq:conv-H}
   \lim_{N \to \infty} H(\bar{\mathbf{L}}^{(N)}_s, J) = H(\bar{\mathbf{L}}^{*}_s, J) \quad \text{almost surely.} 
\end{equation}
\end{clm}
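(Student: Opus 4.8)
The plan is to establish the four displays in turn, the two tail estimates~\eqref{eq:goal-trunc-approx-flory1} and~\eqref{eq:goal-trunc-approx-flory2} being the heart of the matter. Two standing facts are used throughout: the mass bound $\|\bar{\mathbf{L}}^{(N)}_s\| \le \|\bar{\mathbf{L}}^{(N)}_0\| \le c'$ (the number of clusters only decreases), and the uniform-in-$(s,N)$ second-moment bound obtained from Lemma~\ref{lem:conservation}, Equation~\eqref{eq:conservation-2}, together with~\eqref{eq:conv-cons-prob}. For~\eqref{eq:goal-trunc-approx-flory1} I would split $\hat{G}_N - \hat{G}^{(k)}$ into a tail term $\int_{C_k^c}\bar{\mathbf{L}}^{(N)}_s(\dd x)\int_E\bar{\mathbf{L}}^{(N)}_s(\dd y)[\bar{K}-\phi]J(y)$ and a diagonal term $-\frac1N\int_E\bar{\mathbf{L}}^{(N)}_s(\dd x)[\bar{K}-\phi](x,x)J(x)$. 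As $J\in C_c(E)$ and $\bar{K},\phi$ are continuous, the diagonal term is $O(1/N)$ and vanishes by the mass bound. For the tail I restrict $y$ to $C'=\supp J$ and invoke Assumption~\ref{ass:conv-to-flory}: under~\eqref{eq:lim-cons-kern} one has $|\bar{K}-\phi|\le\eps_k\phi^{*}$ on $C_k^c\times C'$ with $\eps_k\to0$, so the tail is at most $\eps_k\|J\|_\infty\int_{E\times E}\bar{\mathbf{L}}^{(N)}_s\otimes\bar{\mathbf{L}}^{(N)}_s\,\phi^{*}$, which is controlled by the second-moment bound; under~\eqref{bounded-outside-compact} instead $|\bar{K}-\phi|$ is bounded on $C_k^c\times C'$ and $\bar{\mathbf{L}}^{(N)}_s(C_k^c)\to0$ since $\bigcap_k\overline{C_k^c}=\varnothing$. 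Taking $\limsup_N$ first and then $k\to\infty$ gives the claim.

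Equation~\eqref{eq:goal-trunc-approx-flory2} is the same tail estimate for the limit trajectory $\bar{\mathbf{L}}^{*}$, where neither an $N$ nor a diagonal term appears, so $\hat{G}-\hat{G}^{(k)}=\int_{C_k^c}\bar{\mathbf{L}}^{*}_s(\dd x)\int_{C'}\bar{\mathbf{L}}^{*}_s(\dd y)[\bar{K}-\phi]J(y)$. The one new ingredient is transporting the second-moment bound to the limit: since $\phi^{*}$ is continuous and nonnegative, weak lower semicontinuity of $\mathbf{u}\mapsto\int \mathbf{u}\otimes\mathbf{u}\,\phi^{*}$ (via Proposition~\ref{prop:weak-conv-margin} and Portmanteau) yields $\int\bar{\mathbf{L}}^{*}_s\otimes\bar{\mathbf{L}}^{*}_s\,\phi^{*}\le\liminf_N\int\bar{\mathbf{L}}^{(N)}_s\otimes\bar{\mathbf{L}}^{(N)}_s\,\phi^{*}<\infty$, after which the estimate closes exactly as before.

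Display~\eqref{eq:approx_G_+_N} concerns two error terms, each carrying an explicit factor $1/N$. The term $\mathcal{E}_N=\frac1N\int_E\phi(x,x)J(x)\bar{\mathbf{L}}^{(N)}_s(\dd x)$ is $O(1/N)$, since $\phi(\cdot,\cdot)J$ is bounded on the compact $\supp J$ and $\|\bar{\mathbf{L}}^{(N)}_s\|\le c'$; for $G^{+}_N-G^{+}=-\frac1{2N}\int_E\bar{\mathbf{L}}^{(N)}_s(\dd x)\int_E K(x,x,\dd z)J(z)$, compact support of $J$ together with mass conservation $m(z)=2m(x)$ for $K(x,x,\cdot)$-a.a.\ $z$ confines the $x$-integral to $\{x:2m(x)\in m(C')\}$, on which $\bar{K}$ is bounded by continuity, so this is again $O(1/N)$; both vanish after integrating over $[0,t]$. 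For~\eqref{eq:conv-H}, set $g_N(y)=\pairing{\phi(\cdot,y),\bar{\mathbf{L}}^{(N)}_0}$ and $g(y)=\pairing{\phi(\cdot,y),\mu}$; Equation~\eqref{eq:conv-probab} gives $\sup_{y\in C'}|g_N-g|\to0$ almost surely, and since each $g_N$ is a finite sum of continuous functions, $g$ is continuous and $gJ\in C_c(E)$. Splitting $H(\bar{\mathbf{L}}^{(N)}_s,J)-H(\bar{\mathbf{L}}^{*}_s,J)=\int(g_N-g)J\,\dd\bar{\mathbf{L}}^{(N)}_s+\big(\int gJ\,\dd\bar{\mathbf{L}}^{(N)}_s-\int gJ\,\dd\bar{\mathbf{L}}^{*}_s\big)$, the first term is at most $\sup_{C'}|g_N-g|\,\|J\|_\infty c'\to0$, and the second vanishes by the weak convergence $\bar{\mathbf{L}}^{(N)}_s\to\bar{\mathbf{L}}^{*}_s$ of Proposition~\ref{prop:weak-conv-margin} applied to $gJ\in C_c(E)$.

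The main obstacle is the pair of tail estimates~\eqref{eq:goal-trunc-approx-flory1} and~\eqref{eq:goal-trunc-approx-flory2}: one must pair the \emph{uniform-in-$s$} second-moment bound with the correct order of limits ($N$ before $k$), so that the vanishing factor $\eps_k$ can absorb the finite-but-not-small second moment, and, for the limit trajectory, transfer that bound through weak lower semicontinuity of $\mathbf{u}\mapsto\int\mathbf{u}\otimes\mathbf{u}\,\phi^{*}$. The diagonal corrections, while routine, are precisely where the structural hypotheses (compact support of $J$ and mass conservation) are genuinely needed.
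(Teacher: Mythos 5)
Your proposal follows the paper's proof in all essentials: the same diagonal/tail decomposition of $\hat{G}_N-\hat{G}^{(k)}$, the same use of~\eqref{eq:lim-cons-kern} (resp.~\eqref{bounded-outside-compact} with Portmanteau) on $C_k^c\times \supp(J)$, the same order of limits ($N$ before $k$), the same $O(1/N)$ treatment of the error terms in~\eqref{eq:approx_G_+_N}, and the same two-term splitting of $H$ using~\eqref{eq:conv-probab} and Proposition~\ref{prop:weak-conv-margin}. However, one step fails as written. In the tail estimate you bound by $\eps_k\|J\|_\infty\int_{E\times E}\bar{\mathbf{L}}^{(N)}_s\otimes\bar{\mathbf{L}}^{(N)}_s\,\phi^{*}$ and assert that this is ``controlled by the second-moment bound.'' It is not: Lemma~\ref{lem:conservation} and~\eqref{eq:conv-cons-prob} control only the off-diagonal measure $\bar{\mathbf{L}}^{(N)}_s(\dd x)\bigl(\bar{\mathbf{L}}^{(N)}_s(\dd y)-\tfrac{\delta_x}{N}\bigr)$, and the full product differs from it by the diagonal term $\tfrac1N\int_E\phi^{*}(x,x)\,\bar{\mathbf{L}}^{(N)}_s(\dd x)$, which the hypotheses deliberately do not control. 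The paper's own remark emphasises that the subtraction of $\delta_x/N$ in~\eqref{eq:conv-cons-prob} is crucial precisely because one may have $\langle \xi'\circ m,\mu\rangle<\infty$ but $\langle(\xi'\circ m)^2,\mu\rangle=\infty$ (e.g.\ i.i.d.\ initial data), in which case your majorant has infinite expectation. The same problem recurs in your argument for~\eqref{eq:goal-trunc-approx-flory2}: lower semicontinuity gives $\int\bar{\mathbf{L}}^{*}_s\otimes\bar{\mathbf{L}}^{*}_s\,\phi^{*}\le\liminf_N\int\bar{\mathbf{L}}^{(N)}_s\otimes\bar{\mathbf{L}}^{(N)}_s\,\phi^{*}$, but the right-hand side is again not known to be finite.

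The repair is immediate and is exactly what the paper does. Keep the off-diagonal measure in the tail term (as in~\eqref{eq:trunc-app-1} and~\eqref{eq:limsup-bounded}): because of the factor $J(y)$, the diagonal contribution of the full product is supported on $\{x=y\}\cap\bigl(C_k^c\times\supp(J)\bigr)$, where $\phi^{*}$ and $\bar{K}-\phi$ are bounded by continuity on the compact $\supp(J)\times\supp(J)$, hence it contributes only $O(1/N)$; what remains is precisely the off-diagonal quantity that~\eqref{eq:conv-cons-prob} bounds. For the limit trajectory, finiteness of $\mathbb{E}\bigl[\int_0^t\int_{E\times E}\bar{\mathbf{L}}^{*}_s\otimes\bar{\mathbf{L}}^{*}_s\,\phi^{*}\,\dd s\bigr]$ is obtained in~\eqref{eq:limsup-bounded-2} by truncating to $\phi^{*}\wedge j$, using the off-diagonal convergence~\eqref{eq:weak-conv-without-diagonal}, and applying Fatou, so that the uncontrolled diagonal never enters. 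A smaller instance of the same carelessness: for~\eqref{eq:approx_G_+_N} you claim $\bar{K}$ is ``bounded by continuity'' on $\{x: 2m(x)\in m(\supp(J))\}$, but this is a preimage under $m$ (which is not assumed continuous) and need not be compact, so continuity alone does not yield the bound there. With these corrections your argument closes and coincides with the paper's proof.
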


Indeed, if Equations~\eqref{eq:goal-trunc-approx-flory1}, \eqref{eq:approx_G_+_N},~\eqref{eq:goal-trunc-approx-flory2} and \eqref{eq:conv-H} are satisfied, by approximating  $\hat{G}$ by $\hat{G}^{(k)}$ (using the triangle inequality) in the second equality, and using bounded convergence for the third, we have
\begin{linenomath} 
\begin{align} \label{eq:trunc-works}
0 & = \lim_{N \to \infty} \mathbb{E}\bigg[\bigg|\tilde{J}(\bar{\mathbf{L}}^{(N)}_t) - \tilde{J}(\bar{\mathbf{L}}^{(N)}_0) - \int_{0}^{t}  G_N^{+}(\bar{\mathbf{L}}^{(N)}_s, J) - \hat{G}_N(\bar{\mathbf{L}}^{(N)}_s, J) - H_N(\bar{\mathbf{L}}^{(N)}_s, J)+\mathcal{E}_N(\bar{\mathbf{L}}^{(N)}_s, J) \, \dd s\bigg|\bigg]
\\ & \hspace{-0.35cm} \stackrel{\eqref{eq:goal-trunc-approx-flory1},\eqref{eq:approx_G_+_N}}{=} \lim_{k \to \infty}  \lim_{N \to \infty} \mathbb{E}\bigg[\bigg|\tilde{J}(\bar{\mathbf{L}}^{(N)}_t) - \tilde{J}(\bar{\mathbf{L}}^{(N)}_0) - \int_{0}^{t}  G^{+}(\bar{\mathbf{L}}^{(N)}_s, J) - \hat{G}^{(k)}(\bar{\mathbf{L}}^{(N)}_s, J) - H(\bar{\mathbf{L}}^{(N)}_s, J) \, \dd s\bigg|\bigg]
%%%%%%
\\ &  = \lim_{k \to \infty}\mathbb{E}\bigg[\lim_{N \to \infty} \bigg|\tilde{J}(\bar{\mathbf{L}}^{(N)}_t) - \tilde{J}(\bar{\mathbf{L}}^{(N)}_0) - \int_{0}^{t}  G^{+}(\bar{\mathbf{L}}^{(N)}_s, J) - \hat{G}^{(k)}(\bar{\mathbf{L}}^{(N)}_s, J) - H(\bar{\mathbf{L}}^{(N)}_s, J) \, \dd s \bigg|\bigg]
\\ & \stackrel{\eqref{eq:conv-H}}{=} \lim_{k \to \infty}\mathbb{E}\bigg[\bigg|\tilde{J}(\bar{\mathbf{L}}^{*}_t) - \tilde{J}(\init)  - \int_{0}^{t}  G^{+}(\bar{\mathbf{L}}^{*}_s, J) - \hat{G}^{(k)}(\bar{\mathbf{L}}^{*}_s, J) - H(\bar{\mathbf{L}}^{*}_s, J) \, \dd s\bigg|\bigg]
 \\ & \stackrel{\eqref{eq:goal-trunc-approx-flory2}}{=} \E{ \left|\tilde{J}(\bar{\mathbf{L}}^{*}_t) - \tilde{J}(\init) - \int_{0}^{t}  G^{+}(\bar{\mathbf{L}}^{*}_s, J) - \hat{G}(\bar{\mathbf{L}}^{*}_s, J) - H(\bar{\mathbf{L}}^{*}_s, J) \, \dd s\right|}.
\end{align}
\end{linenomath}
%where we note that the third equality uses bounded convergence, since the term $\hat{G}^{(k)}$ is bounded.  
\end{proof}
Finally, we finish the proof of Claim~\ref{clm:compact-test-equation-flory}.
\begin{proof}[Proof of Claim~\ref{clm:compact-test-equation-flory}]
Note that we have
\begin{linenomath}
\begin{align} \label{eq:trunc-app-1}
&  \E{\left|\int_{0}^{t}\hat{G}_N(\bar{\mathbf{L}}^{(N)}_{s}, J) - \hat{G}^{(k)}(\bar{\mathbf{L}}^{(N)}_{s}, J) \, \dd s \right|}
\\ & \leq \frac 1N\E{\left|\int_{0}^{t}\int_{C_{k}} \bar{\mathbf{L}}^{(N)}_{s}(\dd x) |\bar{K}(x,x)-\phi(x,x)||J(x)| \, \dd s \right|} \\
&\qquad +\E{\int_{0}^{t}\int_{C_{k}^{c}} \bar{\mathbf{L}}^{(N)}_{s}(\dd x) \int_{E}\left(\bar{\mathbf{L}}^{(N)}_{s}(\dd y) -\frac{\delta_x}N\right)\left|\bar{K}(x, y)  - \phi(x, y)\right| \left|J(y)\right| \dd s},
\end{align}
\end{linenomath} 
where we immediately see that \begin{equation}\label{eq:trunc-limit}
\limsup_{N\to\infty} \frac 1N\E{\left|\int_{0}^{t}\int_{C_{k}} \bar{\mathbf{L}}^{(N)}_{s}(\dd x) |\bar{K}(x,x)-\phi(x,x)||J(x)| \, \dd s \right|} =0,
\end{equation}
since $\bar{K}$, $\phi$ and $J$ are continuous, they are bounded on the support of $J$ and thus so is the expectation. Now, if Equation~\eqref{bounded-outside-compact} applies, then the integrand of the second term in~\eqref{eq:trunc-app-1} is bounded by some constant $c' > 0$, thus by~\eqref{eq:weak-conv-without-diagonal} and the Portmanteau theorem, we have
\begin{linenomath}
\begin{align}
& \limsup_{N \to \infty} \E{\int_{0}^{t}\int_{C_{k}^{c}} \bar{\mathbf{L}}^{(N)}_{s}(\dd x) \int_{E}\left(\bar{\mathbf{L}}^{(N)}_{s}(\dd y) -\frac{\delta_x}N\right)\left|\bar{K}(x, y)  - \phi(x, y)\right| \left|J(y)\right| \dd s} \\ & \hspace{3cm} \leq c' \E{\int_{0}^{t}\bar{\mathbf{L}}^{*}_{s}\left(\overline{C_{k}^{c}}\right) \bar{\mathbf{L}}^{*}_{s}(E)\dd s};
\end{align}
\end{linenomath}
and applying bounded convergence, using the fact that $\bigcap_{k \in \mathbb{N}} \overline{C^{c}_{k}} = \varnothing$, we deduce~\eqref{eq:goal-trunc-approx-flory1}. 
Otherwise, in the case that Equation~\eqref{eq:lim-cons-kern} applies, we bound the second term in~\eqref{eq:trunc-app-1} as follows:
\begin{linenomath}
\begin{align} \label{eq:trunc-app-2}
&\E{\int_{0}^{t}\int_{C_{k}^{c}} \bar{\mathbf{L}}^{(N)}_{s}(\dd x) \int_{E}\left(\bar{\mathbf{L}}^{(N)}_{s}(\dd y) -\frac{\delta_x}N\right)\left|\bar{K}(x, y)  - \phi(x, y)\right| \left|J(y)\right| \dd s},
\\ & \leq \E{\int_{0}^{t}\int_{C_{k}^{c}}  \bar{\mathbf{L}}^{(N)}_{s}(\dd x) \int_{E} \left(\bar{\mathbf{L}}^{(N)}_{s}(\dd y) -\frac{\delta_x}N\right) \phi^{*}(x, y) \, \dd s} \left\|J\right\|_{\infty} \sup_{x \in C_{k}^{c}, \, y \in \supp{(J)}}\left|\frac{\bar{K}(x, y)  - \phi(x, y)}{\phi^{*}(x, y)}\right|,
\end{align}
\end{linenomath} 
Since $\phi^{*}$ is doubly sub-conservative, and satisfies Equation~\eqref{eq:conv-cons-prob}, we have  \begin{linenomath}
\begin{align} \label{eq:limsup-bounded}
&     \limsup_{N \to \infty} \E{\int_{0}^{t}\int_{C_{k}^{c} \times E}  \bar{\mathbf{L}}^{(N)}_{s}(\dd x) \left(\bar{\mathbf{L}}^{(N)}_{s}(\dd y) -\frac{\delta_x}N\right)  \phi^{*}(x, y) \, \dd s} 
\\ & \leq \limsup_{N \to \infty} \, \E{\int_{0}^{t}\int_{E \times E}  \bar{\mathbf{L}}^{(N)}_{s}(\dd x) \left(\bar{\mathbf{L}}^{(N)}_{s}(\dd y) -\frac{\delta_x}N\right)  \phi^{*}(x, y) \, \dd s}
\\ & \leq \limsup_{N \to \infty} t \, \E{\int_{E \times E}  \bar{\mathbf{L}}^{(N)}_{0}(\dd x) \left(\bar{\mathbf{L}}^{(N)}_{0}(\dd y) -\frac{\delta_x}N\right) \phi^{*}(x, y)} \stackrel{\eqref{eq:conv-cons-prob}}{<} \infty.
\end{align}
\end{linenomath}
Combining Equation~\eqref{eq:limsup-bounded}, with \eqref{eq:trunc-app-1}, \eqref{eq:trunc-limit} and \eqref{eq:trunc-app-2}, we deduce Equation~\eqref{eq:goal-trunc-approx-flory1}.

Equation~\eqref{eq:approx_G_+_N} is proved in an analogous manner to~\eqref{eq:trunc-limit}, exploiting the compact support of $J$. %we prove  by noticing that $J$ has always compact support and $K$ and $\phi$ are continuous, so the expectations of single integrals with respect to $\bar{\mathbf{L}}^{(N)}_{s}$ on compact domains are always finite. 
For~\eqref{eq:goal-trunc-approx-flory2}, by the monotone convergence theorem,~\eqref{eq:weak-conv-without-diagonal} and Fatou's lemma we have 
\begin{linenomath}
    \begin{align} \label{eq:limsup-bounded-2}
        & \E{\int_{0}^{t}\int_{E \times E}  \bar{\mathbf{L}}^{*}_{s}(\dd x) \bar{\mathbf{L}}^{*}_{s}(\dd y) \phi^{*}(x, y) \, \dd s} = \E{\int_{0}^{t} \lim_{j\to \infty} \int_{E \times E}  \bar{\mathbf{L}}^{*}_{s}(\dd x) \bar{\mathbf{L}}^{*}_{s}(\dd y) \left( \phi^{*}(x, y) \wedge j \right)\, \dd s}
        \\ & \hspace{1cm} = \E{\int_{0}^{t} \lim_{j\to \infty} \lim_{N \to \infty} \int_{E \times E}  \bar{\mathbf{L}}^{(N)}_{s}(\dd x) \left(\bar{\mathbf{L}}^{(N)}_{s}(\dd y)-\frac{\delta_x}N\right) \left(\phi^{*}(x, y) \wedge j \right) \, \dd s}
        \\ & \hspace{1cm} \leq \limsup_{N \to \infty } \E{\int_{0}^{t}\int_{E \times E}  \bar{\mathbf{L}}^{(N)}_{s}(\dd x) \left(\bar{\mathbf{L}}^{(N)}_{s}(\dd y)-\frac{\delta_x}N\right) \phi^{*}(x, y) \, \dd s} \stackrel{\eqref{eq:limsup-bounded}}{<} \infty.
    \end{align}
\end{linenomath}
 Applying Equation~\eqref{eq:limsup-bounded-2} we deduce Equation~\eqref{eq:goal-trunc-approx-flory2} in a similar manner to Equation~\eqref{eq:goal-trunc-approx-flory1}. Finally, recalling the definition of the functional $H$ from Equation~\eqref{H}, we have 
 \[
\lim_{N \to \infty} H(\bar{\mathbf{L}}^{(N)}_s, J) = \lim_{N\to \infty} \int_{E \times E}\bar{\mathbf{L}}^{(N)}_s(\dd y) \bar{\mathbf{L}}^{(N)}_0(\dd x) \phi(x, y)J(y).
 \]
Thus,
\begin{linenomath}
    \begin{align} \label{eq:H-approx-bound} 
       & \limsup_{N \to \infty} \left| H(\bar{\mathbf{L}}^{(N)}_s, J) - H(\bar{\mathbf{L}}^{*}_s, J)\right| \\ & \hspace{0.25cm} = \limsup_{N \to \infty} 
        \left|\int_{E \times E}\bar{\mathbf{L}}^{(N)}_s(\dd y) \bar{\mathbf{L}}^{(N)}_0(\dd x) \phi(x, y)J(y) - \int_{E \times E}\bar{\mathbf{L}}^{*}_s(\dd y) \init(\dd x) \phi(x, y)J(y) \right| 
        \\& \hspace{0.25cm}\leq \limsup_{N \to \infty} 
        \left|\int_{E \times E}\bar{\mathbf{L}}^{(N)}_s(\dd y) \bar{\mathbf{L}}^{(N)}_0(\dd x) \phi(x, y)J(y) - \int_{E \times E}\bar{\mathbf{L}}^{(N)}_s(\dd y) \init(\dd x) \phi(x, y)J(y) \right| 
        \\ & \hspace{1cm} + \limsup_{N \to \infty} 
        \left|\int_{E \times E}\bar{\mathbf{L}}^{(N)}_s(\dd y) \init(\dd x) \phi(x, y)J(y) - \int_{E \times E}\bar{\mathbf{L}}^{*}_s(\dd y) \init(\dd x) \phi(x, y)J(y) \right|.
 % \left|\int_{E}\bar{\mathbf{L}}^{(N)}_s(\dd y) f_{0}^{(N)}(y) J(y) - 
  %     \int_{E}\bar{\mathbf{L}}^{(N)}_s(\dd y) f_{0}^{(N)}(y) J(y) -
   %         \int_{E}\bar{\mathbf{L}}^{(N)}_s(\dd y) f_{0}(y)J(y) \right| \\ & \hspace{2cm} + \limsup_{N \to \infty} \left|\int_{E}\bar{\mathbf{L}}_s(\dd y) f_{0}(y) J(y) - \int_{E}\bar{\mathbf{L}}^{*}_s(\dd y) f_{0}(y) J(y) \right|. 
    \end{align}
\end{linenomath}
The second term in the upper bound of~\eqref{eq:H-approx-bound} is $0$,  since the map $y \mapsto \int_{E} \mu(\dd x) \phi(x,y) J(y)$ is bounded and continuous (because $J$ has compact support and $\phi$ is continuous), and $\bar{\mathbf{L}}^{(N)}_{s} \rightarrow \bar{\mathbf{L}}^{*}_{s}$. On the other hand, by applying Equation~\eqref{eq:conv-probab}, with the compact set $C'$ chosen to be the support of $J$, for any $\eps > 0$, there exists $N_{0}$ such that, for all $N \geq N_{0}$ we have 
\begin{linenomath}
\begin{align*}
       & \left|\int_{E \times E}\bar{\mathbf{L}}^{(N)}_s(\dd y) \bar{\mathbf{L}}^{(N)}_0(\dd x) \phi(x, y)J(y) - \int_{E \times E}\bar{\mathbf{L}}^{(N)}_s(\dd y) \init(\dd x) \phi(x, y)J(y) \right| 
       \\ & \hspace{4.5cm} \leq \eps\left|\int_{E}\bar{\mathbf{L}}^{(N)}_s(\dd y)  J(y) \right|,
\end{align*}
\end{linenomath}
and thus, taking limits superior of both sides, since $J$ is bounded and continuous, 
\begin{linenomath}
    \begin{align*}
       &  \limsup_{N \to \infty} 
        \left|\int_{E \times E}\bar{\mathbf{L}}^{(N)}_s(\dd y) \bar{\mathbf{L}}^{(N)}_0(\dd x) \phi(x, y)J(y) - \int_{E \times E}\bar{\mathbf{L}}^{(N)}_s(\dd y) \init(\dd x) \phi(x, y)J(y) \right| 
        \\ & \hspace{5cm} \leq \eps \left|\int_{E}\bar{\mathbf{L}}^{*}_s(\dd y) J(y) \right|.
    \end{align*}
\end{linenomath}
Sending $\eps \to 0$, we deduce that the first term in the upper bound of~\eqref{eq:H-approx-bound} is also $0$, hence conclude the proof of~\eqref{eq:conv-H}. 
%since $\bar{\mathbf{L}}^{(N)}_{s} \otimes \bar{\mathbf{L}}^{(N)}_{0} \rightarrow \bar{\mathbf{L}}^{*}_{s} \otimes \init$ almost surely, and $(x,y) \mapsto \phi(x,y) J(y)$ is bounded and continuous (since $J$ has compact support). 
%As $\phi$ is continuous, so is $f_{0}$, thus $f_0 J$ is bounded and continuous.
%Thus, the second term on the right-hand side of Equation~\eqref{eq:H-approx-bound} is zero, since, by bounded convergence, once may pass the limit inside the integral, and $f_{0}^{(N)} \rightarrow f_{0}$ pointwise. 
%(by continuity) $f_{0}^{(N)} \rightarrow f$ uniformly on the support of $J$. 
%For the same reason, for any $\eps > 0$, in the %first absolute value, we may bound $f^{(N)}_{0}y$ above and below by $f_{0}y + \eps$ and $f_{0}y - \eps$ respectively.  By weak convergence this yields 
%\begin{linenomath}
%    \begin{align} \label{eq:H-approx-final}
%       &  \limsup_{N \to \infty} \left|\int_{\mathcal{S}\times \mathbb{N}}\bar{\mathbf{L}}^{(N)}_s(\dd y) f_{0}^{(N)}y Jy - \int_{\mathcal{S}\times \mathbb{N}}\bar{\mathbf{L}}^{*}_s(\dd y) f_{0}^{(N)}yJy \right|
%       \\ & \leq 2 \eps \|J\|_{\infty} + \lim_{N \to \infty} \left|\int_{\mathcal{S}\times \mathbb{N}}\bar{\mathbf{L}}^{(N)}_s(\dd y) f_{0}y Jy - \int_{\mathcal{S}\times \mathbb{N}}\bar{\mathbf{L}}^{*}_s(\dd y) f_{0}yJy \right| = 2 \eps \|J\|_{\infty}.
%    \end{align}
%\end{linenomath}
%Sending $\eps$ to $0$ concludes the proof of~\eqref{eq:conv-H} and the claim. 
\end{proof}
\subsection{Uniqueness: proof of Theorem~\ref{thm:uniqueness}}
%We now, consider the complete norm on $\mathcal{M}(E \times E)$ such that, for $\nu \in \mathcal{M}(E \times E)$
%\[\left\| \nu\right\|_{\phi^{*}} := \left\langle \phi^{*}, |\nu|\right \rangle.  \]
\begin{proof}[Proof of Theorem~\ref{thm:uniqueness}]
Suppose that $(\mu_{s})_{s \geq 0}$ and $(\hat{\mu}_{s})_{s \geq 0}$ denote two solutions to the Flory equation, with a given initial condition $\mu_{0}$, with $\|\mu_{0}\| < \infty$.
Suppose that $\left(\mu_{s} - \hat{\mu}_{s}\right)|_{D_{R}}$ denotes the measure $\left(\mu_{s} - \hat{\mu}_{s}\right)$ restricted to $D_{R}$. 
By a well-known property of the total variation distance, we may write
\[
\left\|\left(\mu_{s} - \hat{\mu}_{s}\right)|_{D_{R}}\right\|
= \sup_{f: \|f\|_{\infty} = 1} \pairing{f \mathbf{1}_{D_{R}}, \left(\mu_{s} - \hat{\mu}_{s}) \right)}. 
\]
Note that, by a straightforward approximation argument (approximating a measurable function pointwise by continuous functions), if $(\mu_{t})_{t \geq 0}$ is a solution to the multi-type Flory equation as in Definition~\ref{weak_sol}, Equation~\eqref{eq:flory-test-function} is satisfied for all bounded measurable functions supported on the compact set $D_{k}$, for $k \in \mathbb{N}$.
Thus, for $f$ such that $\|f\|_{\infty} = 1$, we have
\begin{linenomath}
\begin{align} \label{eq:unique-flory}
&  \pairing{f \mathbf{1}_{D_{R}}, \left(\mu_{s} - \hat{\mu}_{s}) \right)}   %= \int_{E} f(x)\mathbf{1}_{D_{R}}(x) \left(\int_{0}^{s} Q(\mu_{r}) \dd r) - \int_{0}^{s} Q(\hat{\mu}_{r}) \dd r \right)(\dd x) 
%\\ & 
\\ & \hspace{1cm} = \frac{1}{2}\int_{0}^{s} \int_{E \times E \times E} f(z) \mathbf{1}_{D_{R}}(z) K(u, v,\dd z) \left(\mu_{r}(\dd u)  \mu_{r}(\dd v) - \hat{\mu}_{r}(\dd u)  \hat{\mu}_{r}(\dd v)  \right) \dd r
\\ & \hspace{2cm} - \int_{0}^{s} \int_{E \times E } f(u)  \mathbf{1}_{D_{R}}(u) \bar{K}(u,v) \left(\mu_{r}(\dd u)  \mu_{r}(\dd v) - \hat{\mu}_{r}(\dd u)  \hat{\mu}_{r}(\dd v)  \right) \dd r
\\ &  \hspace{2cm} - \int_{0}^{s} \int_{E \times E } f(u)  \mathbf{1}_{D_{R}}(u) \phi(u,v) \left(\mu_{r}(\dd u)  \mu_{0}(\dd v) - \hat{\mu}_{r}(\dd u)  \mu_{0}(\dd v)  \right) \dd r
\\ & \hspace{2cm} + \int_{0}^{s} \int_{E \times E } f(u)  \mathbf{1}_{D_{R}}(u) \phi(u,v) \left(\mu_{r}(\dd u)  \mu_{r}(\dd v) - \hat{\mu}_{r}(\dd u)  \hat{\mu}_{r}(\dd v)  \right) \dd r.
\end{align}
\end{linenomath}
We now bound the values of each of the terms in the above display. For the first, since $\phi$ is conservative, recalling the definition of $D_{R}$ in \eqref{D}, we have $\mathbf{1}_{D_{R}}(z) \leq \mathbf{1}_{D_{R}}(u) \mathbf{1}_{D_{R}}(v)$ for $K(u, v, \dd z)$-a.a. $z$. Moreover, bounding $f(z)$ above by $1$, we obtain 
\begin{linenomath}
    \begin{align} \label{eq:uniq-flory-plus-bound}
        &\nonumber \frac{1}{2}\int_{0}^{s} \int_{E \times E \times E} f(z) \mathbf{1}_{D_{R}}(z) K(u, v, \dd z) \left(\mu_{r}(\dd u)  \mu_{r}(\dd v) - \hat{\mu}_{r}(\dd u)  \hat{\mu}_{r}(\dd v)  \right) \dd r
        \\ & \leq \frac{1}{2}\int_{0}^{s} \int_{E \times E} \mathbf{1}_{D_{R}}(u) \mathbf{1}_{D_{R}}(v) \bar{K}(u,v) \left|\mu_{r}(\dd u)  \mu_{r}(\dd v) - \hat{\mu}_{r}(\dd u)  \hat{\mu}_{r}(\dd v)  \right| \dd r.
        \end{align}
\end{linenomath}
We now have the following claim:
\begin{clm} \label{clm:cross-terms}
    We have
    \begin{linenomath}
    \begin{align} \label{eq:clm-cross-terms}
    & \int_{E \times E} \mathbf{1}_{D_{R}}(u) \mathbf{1}_{D_{R}}(v) \phi(u,v) \left|\mu_{r}(\dd u)  \mu_{r}(\dd v) - \hat{\mu}_{r}(\dd u)  \hat{\mu}_{r}(\dd v)  \right| 
    \\ & \hspace{3cm} \leq 
     2 R\int_{E} \mathbf{1}_{D_{R}}(v) \left|\mu_{r}(\dd v) - \hat{\mu}_{r}(\dd v)  \right|. 
    \end{align}
    \end{linenomath}
\end{clm}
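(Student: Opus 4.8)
The plan is to linearise the bilinear quantity $\mu_r\otimes\mu_r - \hat{\mu}_r\otimes\hat{\mu}_r$ by a telescoping identity, and then to bound the resulting inner integrals by $R$ using the definition~\eqref{D} of $D_R$ together with the sub-conservation property of Item~\ref{item:def-1-4} in Definition~\ref{weak_sol}. A preliminary observation that makes everything go through is that $\phi \geq 0$ everywhere: since $\bar{K} \leq c'\phi$ with $c'>0$ and $\bar{K}\geq 0$, we have $\phi \geq \bar{K}/c' \geq 0$. Consequently the test function $\mathbf{1}_{D_R}(u)\mathbf{1}_{D_R}(v)\phi(u,v)$ appearing on the left-hand side of~\eqref{eq:clm-cross-terms} is nonnegative, so integrating it against an upper bound for the total variation of the signed measure $\mu_r\otimes\mu_r-\hat{\mu}_r\otimes\hat{\mu}_r$ yields an upper bound for the left-hand side.

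First I would write $\nu := \mu_r - \hat{\mu}_r$ and use the identity
\[
\mu_r\otimes\mu_r - \hat{\mu}_r\otimes\hat{\mu}_r = \mu_r\otimes\nu + \nu\otimes\hat{\mu}_r,
\]
valid as an equality of signed measures. By the triangle inequality for total variation, together with $|\rho\otimes\sigma| = \rho\otimes|\sigma|$ whenever $\rho$ is a positive measure, this gives the bound of measures
\[
\left|\mu_r\otimes\mu_r - \hat{\mu}_r\otimes\hat{\mu}_r\right| \;\leq\; \mu_r\otimes|\nu| + |\nu|\otimes\hat{\mu}_r .
\]
Integrating $\mathbf{1}_{D_R}(u)\mathbf{1}_{D_R}(v)\phi(u,v)$ against the right-hand side (using Tonelli, justified since $\|\mu_0\|<\infty$ forces $\|\mu_r\|,\|\hat\mu_r\|\leq\|\mu_0\|<\infty$ and hence $|\nu|$ finite) splits the left-hand side of~\eqref{eq:clm-cross-terms} into two terms.

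Next I would treat the two terms symmetrically by controlling the inner integral. For the first term I fix $v\in D_R$ and estimate, using $\mathbf{1}_{D_R}(u)\leq 1$, the symmetry $\phi(u,v)=\phi(v,u)$, the sub-conservation inequality~\eqref{eq:annoying-cons-assumption} applied with $x=v$, and finally $v\in D_R$,
\[
\int_E \mathbf{1}_{D_R}(u)\phi(u,v)\,\mu_r(\dd u) \;\leq\; \int_E \phi(v,u)\,\mu_r(\dd u) \;\leq\; \int_E \phi(v,u)\,\mu_0(\dd u) \;\leq\; R .
\]
For the second term I fix $u\in D_R$ and integrate over $v$ against $\hat{\mu}_r$; here no use of symmetry is needed, since~\eqref{eq:annoying-cons-assumption} (which $\hat{\mu}$ also satisfies, being a solution with the same $\mu_0$) already controls the second argument, giving $\int_E \mathbf{1}_{D_R}(v)\phi(u,v)\hat{\mu}_r(\dd v)\leq\int_E \phi(u,v)\mu_0(\dd v)\leq R$. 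Summing the two contributions, both of which are now multiplied by $\int_E \mathbf{1}_{D_R}|\mu_r-\hat{\mu}_r|$, produces the factor $2R$ and the claimed inequality.

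The main obstacle, such as it is, is conceptual rather than computational: one must pass correctly from the product-measure difference to a total-variation bound at the level of \emph{signed} measures (the telescoping step), and then recognise that the inner integrals are bounded by $R$ precisely because the conservation property~\eqref{eq:annoying-cons-assumption} and the definition~\eqref{D} of $D_R$ act in tandem — the former pushing $\mu_r,\hat{\mu}_r$ below $\mu_0$ in the relevant pairing, the latter capping that pairing by $R$. The role of the symmetry of $\phi$ (noted in the remark preceding Theorem~\ref{thm:uniqueness}) is exactly to convert the first inner integral into the form controlled by~\eqref{eq:annoying-cons-assumption}. Everything else is routine Fubini/Tonelli bookkeeping.
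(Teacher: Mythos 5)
Your proof is correct and follows essentially the same route as the paper: your telescoping $\mu_r\otimes\mu_r-\hat{\mu}_r\otimes\hat{\mu}_r=\mu_r\otimes(\mu_r-\hat{\mu}_r)+(\mu_r-\hat{\mu}_r)\otimes\hat{\mu}_r$ is exactly the paper's splitting obtained by adding and subtracting $\mu_r\otimes\hat{\mu}_r$, and both arguments then bound the inner integrals by $R$ via~\eqref{eq:annoying-cons-assumption} and the definition~\eqref{D} of $D_R$. Your explicit observations that $\phi\geq 0$ (from $\bar{K}\leq c'\phi$) and that symmetry of $\phi$ is needed to handle the term where the positive measure integrates the first argument are details the paper uses silently, but they do not change the approach.
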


By applying Claim~\ref{clm:cross-terms}, and bounding $\bar{K}(x,y) \leq c' \phi(x,y)$, we may now bound the right-side of~\eqref{eq:uniq-flory-plus-bound}: 
\begin{linenomath}
    \begin{align} \label{eq:flory-unique-first}
        & \frac{1}{2}\int_{0}^{s} \int_{E \times E} \mathbf{1}_{D_{R}}(u) \mathbf{1}_{D_{R}}(v) \bar{K}(u,v) \left|\mu_{r}(\dd u)  \mu_{r}(\dd v) - \hat{\mu}_{r}(\dd u)  \hat{\mu}_{r}(\dd v)  \right| \dd r
        \\ & \hspace{1cm} \leq \frac{c'}{2}\int_{0}^{s} \int_{E \times E} \mathbf{1}_{D_{R}}(u) \mathbf{1}_{D_{R}}(v) \phi(u,v) \left|\mu_{r}(\dd u)  \mu_{r}(\dd v) - \hat{\mu}_{r}(\dd u)  \hat{\mu}_{r}(\dd v)  \right| \dd r
        \\ & \hspace{1cm} \leq c'\int_{0}^{s} R\int_{E} \mathbf{1}_{D_{R}}(v) \left|\mu_{r}(\dd v) - \hat{\mu}_{r}(\dd v)  \right| \dd r \leq c' R \int_{0}^{s} \left \|\left(\mu_{r} - \hat{\mu}_{r}\right)|_{D_{R}} \right\| \dd r. 
    \end{align}
\end{linenomath}
Next, re-writing, and combining the second and fourth terms in~\eqref{eq:unique-flory}, recalling that $\phi(x,y)$ coincides with $\bar{K}(x,y)$ on $(D_{R} \times D_{R})^{c}$ (so in particular $D_{R} \times D_{R}^{c}$) we get 
\begin{linenomath}
    \begin{align} 
        & \int_{0}^{s} \int_{E \times E} f(u) \mathbf{1}_{D_{R}}(u)  \left(\phi(u,v) - \bar{K}(u,v)\right) \left(\mu_{r}(\dd u)  \mu_{r}(\dd v) - \hat{\mu}_{r}(\dd u)  \hat{\mu}_{r}(\dd v)  \right) \dd r
        \\ & = \int_{0}^{s} \int_{E \times E} f(u) \mathbf{1}_{D_{R}}(u) \mathbf{1}_{D_{R}}(v)  \left(\phi(u, v) - \bar{K}(u,v)\right)\\ &\hspace{7cm}\times \left(\mu_{r}(\dd u)  \mu_{r}(\dd v) - \hat{\mu}_{r}(\dd u)  \hat{\mu}_{r}(\dd v)  \right) \dd r
        \\ & \hspace{1cm} \leq (c'+1) \int_{0}^{s} \int_{E \times E} \phi(u,v) \mathbf{1}_{D_{R}}(u) \mathbf{1}_{D_{R}}(v)\left|\mu_{r}(\dd u)  \mu_{r}(\dd v) - \hat{\mu}_{r}(\dd u)  \hat{\mu}_{r}(\dd v)\right| \dd r
        \\ & \hspace{1cm} \stackrel{\eqref{eq:clm-cross-terms}}{\leq} 2 R (c'+1) \int_{0}^{s} \int_{E} \mathbf{1}_{D_{R}}(v) \left|\mu_{r}(\dd v) - \hat{\mu}_{r}(\dd v) \right| \dd r 
        \\ &\hspace{1cm} = 2 R (c'+1) \int_{0}^{s} \left \|\left(\mu_{r} - \hat{\mu}_{r}\right)|_{D_{R}} \right\| \dd r, \label{eq:flory-unique-second-fourth}
    \end{align}
\end{linenomath}
where in the second to last inequality, we use the bound $\bar{K}(u,v) \leq c' \phi(u,v)$. Finally, for the third term in~\eqref{eq:unique-flory}, observing that $\mu_{0}$ is a positive measure, we make a similar computation: 
\begin{linenomath}
    \begin{align} 
        & - \int_{0}^{s} \int_{E \times E } f(u)  \mathbf{1}_{D_{R}}(u) \phi(u,v) \left(\mu_{r}(\dd u)  \mu_{0}(\dd v) - \hat{\mu}_{r}(\dd u)  \mu_{0}(\dd v)  \right) \dd r \\ & \hspace{3cm} \leq \int_{0}^{s} \int_{E \times E }  \mathbf{1}_{D_{R}}(u) \phi(u,v) \left| \mu_{r}(\dd u)  \mu_{0}(\dd v) - \hat{\mu}_{r}(\dd u)  \mu_{0}(\dd v)  \right| \dd r
        \\ & \hspace{3cm}  
        = \int_{0}^{s} \int_{E}  \mathbf{1}_{D_{R}}(u) \int_{E} \phi(u,v) \mu_{0}(\dd v) \left| \mu_{r}(\dd u) - \hat{\mu}_{r}(\dd u) \right| \dd r\\
        %\\ & \hspace{4cm}
        & \hspace{3cm} \leq R \int_{0}^{s} \left \|\left(\mu_{r} - \hat{\mu}_{r}\right)|_{D_{R}} \right\| \dd r \label{eq:flory-unique-third}
    \end{align}
\end{linenomath}
Combining Equations~\eqref{eq:flory-unique-first},~\eqref{eq:flory-unique-second-fourth} and~\eqref{eq:flory-unique-third}, to bound~\eqref{eq:unique-flory} we deduce that 
\begin{linenomath}
    \begin{align*}
       \left \|\left(\mu_{s} - \hat{\mu}_{s}\right)|_{D_{R}}  \right \| = \sup_{f: \|f\|_{\infty} = 1} \pairing{f \mathbf{1}_{D_{R}}, \left(\mu_{s} - \hat{\mu}_{s})\right)}
       \leq 3R(c' + 1)  \int_{0}^{s} \left \|\left(\mu_{r} - \hat{\mu}_{r}\right)|_{D_{R}} \right\| \dd r.
    \end{align*}
\end{linenomath}
\begin{clm} \label{clm:conservation}
    Suppose that $(\mu_{t})_{t \geq 0}$ is a solution to the multi-type Flory equation. Then, if $\xi: E \rightarrow \mathbb{R}_{+}$  is a sub-conservative function, for each $t \geq 0$
    \begin{equation} \label{eq:clm-conservation}
  \int_{E} \xi(x) \mu_{t}(\dd x) \leq \int_{E} \xi(x) \mu_{0}(\dd x). 
    \end{equation}
\end{clm}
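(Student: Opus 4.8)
The plan is to test the weak formulation~\eqref{eq:flory-test-function} against a sequence of bounded, compactly supported approximations of $\xi$ which increase pointwise to $\xi$ and which remain sub-additive along coagulation events; this makes both the coagulation part and the Flory-correction part of the instantaneous rate manifestly non-positive.

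First I would recall, as already observed in the present proof, that~\eqref{eq:flory-test-function} extends from $f \in C_c(E)$ to every bounded measurable $f$ supported on a compact set, in particular on each $D_k$ from~\eqref{D}. Since $\phi$ is conservative (Equation~\eqref{eq:good-kernels}) and non-negative (as $0 \le \bar{K} \le c'\phi$), for $K(x,y,\cdot)$-a.a.\ $z$ one has $\int_E \phi(z,q)\,\mu_0(\dd q) = \int_E \phi(x,q)\,\mu_0(\dd q) + \int_E \phi(y,q)\,\mu_0(\dd q)$ with both summands non-negative; hence $z \in D_k$ forces $x,y \in D_k$, i.e.\ $\mathbf{1}_{D_k}(z) \le \mathbf{1}_{D_k}(x)\,\mathbf{1}_{D_k}(y)$ for $K(x,y,\cdot)$-a.a.\ $z$. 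With this in hand I set $f_n := (\xi \wedge n)\mathbf{1}_{D_n}$, which is bounded, measurable, supported on the compact set $D_n$, and increases pointwise to $\xi$ since $\bigcup_k D_k = E$.

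The key step is to verify that $f_n$ is sub-additive along coagulations: for $K(x,y,\cdot)$-a.a.\ $z$, $f_n(z) \le f_n(x) + f_n(y)$. If $z \notin D_n$ this is immediate; if $z \in D_n$ then $x,y \in D_n$ by the previous paragraph, and sub-conservativity of $\xi$ (Equation~\eqref{eq:sub-cons-kernels}) together with the elementary sub-additivity of $t \mapsto t\wedge n$ gives $\xi(z)\wedge n \le (\xi(x)+\xi(y))\wedge n \le \xi(x)\wedge n + \xi(y)\wedge n$. Inserting $f_n$ into~\eqref{eq:flory-test-function}, and symmetrising the $\bar{K}$-loss term in~\eqref{eq:Q-} using $\bar{K}(x,y) = \bar{K}(y,x)$, the coagulation contribution to the rate becomes $\tfrac12\int_{E\times E}\big[\int_E f_n(z)K(x,y,\dd z) - (f_n(x)+f_n(y))\bar{K}(x,y)\big]\mu_s(\dd x)\mu_s(\dd y) \le 0$, because $\int_E f_n(z)K(x,y,\dd z) \le (f_n(x)+f_n(y))\bar{K}(x,y)$ by sub-additivity and $K(x,y,E) = \bar{K}(x,y)$. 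The remaining Flory-correction contribution is $-\int_E f_n(y) g_\infty(y)\,\mu_s(\dd y) \le 0$, since $f_n \ge 0$ and $g_\infty(y) = \langle \phi(\cdot,y),\mu_0\rangle - \langle \phi(\cdot,y),\mu_s\rangle \ge 0$; the latter follows from~\eqref{eq:annoying-cons-assumption} combined with the symmetry of $\phi$ valid in this setting. All integrals are finite by~\eqref{eq:flory-part-two} applied to the bounded, compactly supported $f_n$.

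Combining these two sign observations yields $\langle f_n, \mu_t\rangle \le \langle f_n, \mu_0\rangle$ for every $n$, and letting $n \to \infty$ with monotone convergence on both sides gives~\eqref{eq:clm-conservation}. The main obstacle is precisely the construction at the heart of the argument: producing approximants that are simultaneously compactly supported (to license the extended weak equation) and sub-additive under coagulation (to sign the gain-minus-loss term). This is what forces the use of the coagulation-downward-closed sublevel sets $D_n$ rather than an arbitrary compact exhaustion, and the truncation $\wedge n$ rather than a continuous spatial cut-off, which would destroy sub-additivity.
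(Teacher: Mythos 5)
Your proof is correct and follows essentially the same route as the paper's: both test the weak formulation (extended to bounded measurable, compactly supported functions) against truncations of $\xi$ supported on the sets $D_k$, use conservativity and non-negativity of $\phi$ to get $\mathbf{1}_{D_k}(z) \leq \mathbf{1}_{D_k}(x)\mathbf{1}_{D_k}(y)$ for $K(x,y,\cdot)$-a.a.\ $z$, sign the coagulation term via sub-conservativity and the correction term via~\eqref{eq:annoying-cons-assumption}, and conclude by monotone convergence. The only differences are cosmetic: you use a single diagonal truncation $(\xi\wedge n)\mathbf{1}_{D_n}$ where the paper takes two successive monotone limits (first $k\to\infty$ for bounded $\xi$, then $\xi\wedge j \uparrow \xi$), and you make explicit the appeal to the symmetry of $\phi$ when signing the $g_\infty$ term, which the paper's own computation uses only implicitly.
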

By Claim~\ref{clm:conservation} applied to the function $\xi(x) \equiv 1$, we know that for each $s \geq 0$, we have 
\[\left \|\left(\mu_{s} - \hat{\mu}_{s}\right)|_{D_{R}}  \right \| \leq 2 \| \mu_0 \|.\] 
We can thus apply Gronwall's lemma to deduce that  $\left \|\left(\mu_{s} - \hat{\mu}_{s}\right)|_{D_{R}}  \right \| = 0$. As $\bigcup_{k \in \mathbb{N}} D_{k} = E$, it must be the case that $\left \|\mu_{s} - \hat{\mu}_{s}\right \| = 0$, showing uniqueness. 
\end{proof}
We finish with the proofs of~Claim~\ref{clm:cross-terms} and Claim~\ref{clm:conservation}:
\begin{proof}[Proof of Claim~\ref{clm:cross-terms}]
    We bound 
    \begin{linenomath}
        \begin{align} \label{eq:claim-1-unique-flory}
            & \int_{E \times E} \mathbf{1}_{D_{R}}(u)\mathbf{1}_{D_{R}}(v) \phi(u,v) \left|\mu_{r}(\dd u)  \mu_{r}(\dd v) - \hat{\mu}_{r}(\dd u)  \hat{\mu}_{r}(\dd v)  \right| 
            \\ & \hspace{1.5cm} \leq \int_{E \times E} \mathbf{1}_{D_{R}}(u)\mathbf{1}_{D_{R}}(v) \phi(u,v) \left|\mu_{r}(\dd u)  \mu_{r}(\dd v) - \mu_{r}(\dd u)  \hat{\mu}_{r}(\dd v)  \right| 
            \\ & \hspace{3cm} + \int_{E \times E} \mathbf{1}_{D_{R}}(u)\mathbf{1}_{D_{R}}(v)\phi(u,v) \left|\mu_{r}(\dd u)  \hat{\mu}_{r}(\dd v) - \hat{\mu}_{r}(\dd u)  \hat{\mu}_{r}(\dd v)  \right|.
        \end{align}
    \end{linenomath}    
    For the first term on the right-hand side of~\eqref{eq:claim-1-unique-flory}, we integrate the variable $u$, applying~\eqref{eq:annoying-cons-assumption} 
\begin{linenomath}
    \begin{align*}
        & \int_{E \times E} \mathbf{1}_{D_{R}}(u)\mathbf{1}_{D_{R}}(v)\phi(u,v) \left|\mu_{r}(\dd u)  \mu_{r}(\dd v) - \mu_{r}(\dd u)  \hat{\mu}_{r}(\dd v)  \right| \dd r \\ & \hspace{1cm} \leq \int_{E} \mathbf{1}_{D_{R}}(v) \int_{E} \phi(u,v) \mu_{r}(\dd u) \left|\mu_{r}(\dd v) - \hat{\mu}_{r}(\dd v)  \right| \dd r 
        \\ & \hspace{1cm} \leq \int_{E} \mathbf{1}_{D_{R}}(v) \int_{E} \phi(u,v) \mu_{0}(\dd u) \left|\mu_{r}(\dd v) - \hat{\mu}_{r}(\dd v)  \right| \dd r
        \\ & \hspace{1cm} \leq R \int_{E} \mathbf{1}_{D_{R}}(v) \left|\mu_{r}(\dd v) - \hat{\mu}_{r}(\dd v)  \right| \dd r;
    \end{align*}
\end{linenomath}
and applying a similar argument for the second term in~\eqref{eq:claim-1-unique-flory}, we deduce the result. 
\end{proof}
\begin{proof}[Proof of Claim~\ref{clm:conservation}]
 Suppose first that $\xi$ is bounded.  Then, applying~\eqref{eq:flory-test-function} to the function $\xi \mathbf{1}_{D_{k}}$, we have 
\begin{linenomath}
\begin{align} \label{eq:sub-cons-non-increasing}
    & \pairing{\xi \mathbf{1}_{D_{k}}, \mu_{s} - \mu_{0}} 
    \\ & = \frac{1}{2}\int_{0}^{s} \int_{E \times E \times E} \left(\xi(z) \mathbf{1}_{D_{k}}(z) - \xi(u) \mathbf{1}_{D_{k}}(u) - \xi(v) \mathbf{1}_{D_{k}}(v) \right)K(u, v,\dd z) \mu_{r}(\dd u)  \mu_{r}(\dd v) \dd r  
    \\ & \hspace{3cm} + \int_{0}^{s} \int_{E \times E } \xi(u)  \mathbf{1}_{D_{k}}(u) \phi(u,v) \left(\mu_{r}(\dd u)  (\mu_{r}(\dd v) - \mu_{0}(\dd v))  \right) \dd r.
\end{align}
\end{linenomath}
By~\eqref{eq:annoying-cons-assumption}, since $\xi(u) \geq 0$, and $\mu_{r}$ is a positive measure, we deduce that the second term on the right-side above is non-positive. In addition, since $\phi$ is conservative, so is the function $x \mapsto \pairing{\phi(x, \cdot), \mu_0}$, and we deduce that $\mathbf{1}_{D_{R}}(z) \leq \mathbf{1}_{D_{R}}(u) \mathbf{1}_{D_{R}}(v)$ for $K(u, v, \dd z)$-a.a. $z$. Finally, since $\xi$ is sub-conservative,  
\begin{linenomath}
\begin{align*}
& \int_{E \times E \times E} \left(\xi(z) \mathbf{1}_{D_{k}}(z) \right)K(u, v,\dd z) \mu_{r}(\dd u)  \mu_{r}(\dd v)
\\ & \hspace{2cm} \leq \int_{E \times E \times E} \left(\xi(u) + \xi(v)\right) \mathbf{1}_{D_{k}}(u) \mathbf{1}_{D_{k}}(u) K(u, v,\dd z) \mu_{r}(\dd u)  \mu_{r}(\dd v)
\\ & \hspace{2cm} \leq \int_{E \times E \times E} \left(\xi(u)\mathbf{1}_{D_{k}}(u) + \xi(v)\mathbf{1}_{D_{k}}(v)\right)   K(u, v,\dd z) \mu_{r}(\dd u)  \mu_{r}(\dd v).
\end{align*}
\end{linenomath}
Thus, $\pairing{\xi \mathbf{1}_{D_{k}}, \mu_{s}} \leq   \pairing{\xi \mathbf{1}_{D_{k}}, \mu_{0}}$, and we deduce the result from monotone convergence. Finally, we can extend the result to unbounded $\xi$, again from monotone convergence (approximating $\xi$ from below by the sub-conservative functions $\xi \wedge j$, for $j \in \mathbb{N}$). 
\end{proof}

\appendix
\section{General criteria for relative compactness} \label{app:tightness-criteria}
Recall that for each $N$, the cluster coagulation process $(\bar{\mathbf{L}}^{(N)}_{t})_{t \in [0, \infty)}$ is defined as taking values in the space
%$(\bar{\mathbf{L}}^{(N)}_{t})_{t \in [0, \infty)}$ 
%taking values in 
%$\mathcal{E}_{N}$, where we recall that 
%\[
%\mathcal{E}_{N} = \left\{\mathbf{u} \in \mathcal{M}_{l}(\mathcal{S} \times [N]): \sum_{j=1}^{N} j u_{\mathcal{S},j} = N \right\}.\]
%Thus, the normalised process $\bar{\mathbf{L}}^{(N)} := \bar{\mathbf{L}}^{(N)}/N$ takes values in the space
\[
\Ecal=\bigcup_{n\in\N} \left\{\mathbf{u} \in \mathcal{M}_{+}(\mathcal{E}): \langle m, \mathbf{u} \rangle \leq n \right\}
\]
Recall also that equip $\mathcal{E}$ with the Prokhorov metric, which metrises the topology of weak convergence. %\lu{does it metrise the weak topology also on the space of subprobability measures? I have the impression not, but I am not sure, should check.}\lu{or vague convergence? Let's see what is really needed/better here.} 
We may interpret $(\bar{\mathbf{L}}_{t})_{t \in [0, \infty)}$ as a trajectory in $D([0,\infty);\mathcal{E})$, the space of right-continuous functions $f: [0,\infty) \rightarrow \mathcal{E}$ with left-limits. We equip $D([0,\infty);\mathcal{E})$ with the \emph{Skorokhod metric} $d$. 
Recall that for a separable, complete metric space $(\mathcal{E},\delta)$ with $q := \delta \wedge 1$, the Skorokhod metric on $D([0, \infty); \mathcal{E})$ is defined as follows:
Let $\Lambda$ denotes the set of all strictly increasing functions mapping $[0, \infty)$ onto $[0, \infty)$, and $\Lambda' \subseteq \Lambda$ the subset of Lipschitz functions. Then, for $\lambda \in \Lambda'$, define 
\[
\gamma(\lambda) := \sup_{s > t \geq 0} \left|\log{\frac{\lambda(s) - \lambda(t)}{s-t} }\right| < \infty
\]
Then, for $f,g \in D([0, \infty); \mathcal{E})$, we define 
\begin{equation}\label{mod_cont}
d(f,g) := \inf_{\lambda \in \Lambda}\left(\gamma(\lambda) \vee \int_{0}^{\infty} e^{-tu} \left(\sup_{t \geq 0} q(f(t \wedge u), g(t \wedge u))\right) \dd u \right).
\end{equation}
It is well-established that $D([0, \infty); \mathbb{R})$ is a separable and complete metric space see, for example,~\cite[Theorem~5.6]{ethier-kurtz}.
%when $\mathcal{E}$ , then so is $D([0, \infty); \mathcal{E})$ under the metric $d$,  
In this paper, we use the following, well-known criterion for tightness in Skorokhod spaces. %D([0, \infty); \mathcal{E})$, is what we use in this paper. 
The first, from~\cite{ethier-kurtz}, has been slightly reformulated for out purposes. First, we define the following modulus of continuity: for $f \in D([0, \infty); \mathbb{R})$, $\eta > 0$, $T \in [0, \infty)$, we define 
\[
w'(f, \eta, T) := \inf_{\{t_i\}} \max_{i} \sup_{s, t \in [t_{i-1}, t_i)} \left|f(s) - f(t)\right|;
\]
where $\{t_i\}$ ranges over all partitions of $[0,T]$, such that $0 = t_{0} < t_{1} < \cdots < t_{n} = T$, with $t_{i+1} - t_{i} > \eta$ and $n \geq 1$.
\begin{lemma}[{\cite[Corollary~7.4, page 129]{ethier-kurtz}}] \label{lem:ethier-kurtz} %\lu{does this hold in general for $D([0, \infty); E)$ or only for $D([0, \infty); \mathbb{R})$?}
%Suppose that $\mathcal{E}$ is a separable, complete metric space. 
A collection of probability measures $\left\{\mu_{n}\right\}_{n \in \mathbb{N}}$ on the metric space $D([0, \infty); \mathbb{R})$ is tight if and only if the following criteria are satisfied:
\begin{enumerate}
    \item For all 
    %dense subset $\mathscr{D}$ of $[0, \infty)$ such that for every 
    $t \in [0, \infty) \cap \mathbb{Q}$ and $\eps > 0$, there exists a compact set $K(t, \eps) \subseteq \mathcal{E}$ such that, for all $n \in \mathbb{N}$
    \begin{equation} \label{compact_cont}
    \liminf_{n \to \infty} \mu_{n}\left(\left\{f: f(t) \in K(t, \eps)\right\}\right) \geq 1- \eps ,
    \end{equation}
    \item For any $T \in [0, \infty)$, there exists $\eta > 0$ such that, for all $n \in \mathbb{N}$ 
    \begin{equation}\label{tight_cond2}
    \lim_{\eta \to 0} \limsup_{n \to \infty} \mu_{n}\left(\left\{f: w'(f, \eta, T) \geq \eps \right\} \right) = 0.
    \end{equation}
\end{enumerate}
\end{lemma}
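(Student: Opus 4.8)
The plan is to deduce this from Prokhorov's theorem together with the Arzel\`a--Ascoli-type characterisation of relatively compact subsets of the Skorokhod space, both of which are available in~\cite{ethier-kurtz}. Write $S$ for the (complete, separable) target space (here $S = \R$). Since $D([0,\infty); S)$ is then itself complete and separable under the metric~\eqref{mod_cont}, Prokhorov's theorem asserts that $\{\mu_n\}_{n \in \N}$ is tight if and only if it is relatively compact in the topology of weak convergence. It therefore suffices to connect the two stated conditions to relative compactness, and the bridge is the following compactness criterion: a set $A \subseteq D([0,\infty); S)$ has compact closure if and only if (a) for each $t$ in a dense subset of $[0,\infty)$ the set $\{f(t): f \in A\}$ is relatively compact in $S$, and (b) $\lim_{\eta \to 0} \sup_{f \in A} w'(f, \eta, T) = 0$ for every $T \geq 0$.

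For the direction in which the two conditions imply tightness, I would fix $\eps > 0$ and construct a single relatively compact set carrying most of the mass of every $\mu_n$. Enumerating a countable dense set of times $(t_j)_{j \in \N}$, condition~\eqref{compact_cont} supplies for each $j$ a compact $K_j \subseteq S$ with $\liminf_n \mu_n(\{f : f(t_j) \in K_j\}) \geq 1 - \eps 2^{-j-1}$, while condition~\eqref{tight_cond2} supplies, for each $m \in \N$, some $\eta_m > 0$ with $\limsup_n \mu_n(\{f : w'(f, \eta_m, m) \geq 1/m\}) \leq \eps 2^{-m-1}$. Setting
\[
\Gamma := \bigcap_{j} \{f : f(t_j) \in K_j\} \cap \bigcap_{m}\{f : w'(f, \eta_m, m) < 1/m\},
\]
the compactness criterion shows that $\overline{\Gamma}$ is compact, while a union bound gives $\liminf_n \mu_n(\Gamma) \geq 1 - \eps$. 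A routine tail argument upgrades this to a uniform lower bound on $\mu_n(\overline{\Gamma})$ for all $n$ outside a finite set, and the finitely many exceptional measures are each individually tight on the Polish space $D([0,\infty);S)$, so $\overline\Gamma$ can be enlarged to absorb them; Prokhorov's theorem then concludes.

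For the converse, suppose $\{\mu_n\}$ is tight. Given $\eps > 0$, Prokhorov's theorem furnishes a compact $\Gamma \subseteq D([0,\infty); S)$ with $\mu_n(\Gamma) \geq 1 - \eps$ for all $n$. Applying the compactness criterion to $\Gamma$: part (a) shows that $\{f(t): f \in \Gamma\}$ is relatively compact for each dense $t$, so its closure serves as the compact set $K(t,\eps)$ demanded in~\eqref{compact_cont}; part (b) gives $\lim_{\eta \to 0}\sup_{f \in \Gamma} w'(f,\eta, T) = 0$, so that for $\eta$ small the event $\{f : w'(f,\eta,T) \geq \eps\}$ is disjoint from $\Gamma$ and hence has $\mu_n$-measure at most $\eps$ uniformly in $n$, forcing~\eqref{tight_cond2}.

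The main obstacle is not any single estimate but the bookkeeping required to pass from the pointwise-in-$t$ and pointwise-in-$T$ hypotheses to a single relatively compact set $\Gamma$ capturing all times simultaneously, while respecting that the hypotheses only control $\liminf_n$ and $\limsup_n$ rather than every individual index $n$. The genuinely hard analytic content --- the Arzel\`a--Ascoli-type compactness characterisation of $D([0,\infty); S)$ underlying criteria (a) and (b) --- is a standard result that I would cite directly from~\cite{ethier-kurtz} rather than reprove.
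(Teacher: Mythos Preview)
The paper does not actually prove this lemma: it is stated in the appendix as a direct citation of~\cite[Corollary~7.4, page~129]{ethier-kurtz}, followed immediately by a $\blacksquare$ with no argument given. Your proposal is therefore not in competition with any proof in the paper; rather, you have supplied a correct outline of the standard argument that Ethier and Kurtz themselves give, combining Prokhorov's theorem with the Arzel\`a--Ascoli-type characterisation of relatively compact subsets of $D([0,\infty);S)$. The sketch is sound, including the handling of the $\liminf$/$\limsup$ formulation by treating finitely many exceptional indices separately. For the purposes of this paper, though, the intended ``proof'' is simply the citation.
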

\hfill $\blacksquare$ 

In literature surrounding stochastic processes, the first condition is often known as \emph{compact containment}. The following well-known tightness criterion due to Jakubowski applies more generally to $D([0, \infty); F)$, where $F$ is a completely regular Hausdorff spaces with metrisable compacts. Since we assume $\mathcal{E}$ is a metric space, it applies to $D([0, \infty); \mathcal{E})$:
%(again, reformulated slightly for our purposes) will also be useful:
\begin{lemma}[{\cite[Theorem~4.6]{jakubowski}}] \label{lem:jakubowski}
A collection of probability measures $\left\{\mu_{i}\right\}_{i \in I}$ on $D([0, \infty); \mathcal{E})$ is tight if and only if the following criteria are satisfied: %\ti{on a completely regular Hausdorff space with metrisable compacts...}
\begin{enumerate}
    \item For any $t >0$ and $\eps > 0$ there is a compact set $K(t, \eps) \subseteq \mathcal{E}$ such that, for all $i \in I$, 
    \[\mu_{i}\left(\left\{f: \forall \, s \in [0,t] \, f(s) \in K(t, \eps)\right\}\right) \geq  1-\eps. \footnote{If $I = \mathbb{N}$ then we can replace this condition with $\liminf_{n \to \infty} \mu_{n}\left(\left\{f: \forall \, s \in [0,t] \, f(s) \in K(t, \eps)\right\}\right) \geq  1-\eps$, see, for example, the proof of~\cite[Corollary~7.4, page 130]{ethier-kurtz}.} 
    \]
    \item There exists a family of continuous functions $\mathbb{F}$ from $\mathcal{E}$ to $\mathbb{R}$ such that
    \begin{enumerate}
        \item The family $\mathbb{F}$ separates points, i.e., for any $x, y \in \mathcal{E}$ there exists $f \in \mathbb{F}$ such that $f(x) \neq f(y)$.
        \item The family $\mathbb{F}$ is closed under addition, i.e., if $f, g \in \mathbb{F}$ then $f+g \in \mathbb{F}$.
        \item Let, for $f \in \mathbb{F}$, $\tilde{f}: D([0, \infty); \mathcal{E}) \rightarrow D([0, \infty); \mathbb{R})$ denote the map such that $\tilde{f}(x) = f \circ x$, for $x\in~D([0, \infty);\mathcal{E})$. Then, for each $f \in \mathbb{F}$ the family of pushforward measures $\left\{\tilde{f}_{*}
        (\mu_{i})\right\}_{i \in I}$ is a tight family on $D([0, \infty); \mathbb{R})$.
    \end{enumerate}
\end{enumerate}
\end{lemma}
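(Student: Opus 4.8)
The statement to be established is Jakubowski's tightness criterion, which the paper cites as \cite[Theorem~4.6]{jakubowski}; the plan is to reproduce the argument behind that reference, treating the two implications separately. Necessity is routine, while sufficiency is the substantive direction. For necessity, suppose $\{\mu_i\}_{i \in I}$ is tight. By definition, for each $\eps > 0$ there is a compact $\mathbf{K}_{\eps} \subseteq D([0,\infty);\mathcal{E})$ with $\mu_i(\mathbf{K}_{\eps}) \geq 1-\eps$ for all $i$. The first condition follows because the range $\bigcup_{x \in \mathbf{K}_{\eps}} \overline{x([0,t])}$ traced out over $[0,t]$ by a Skorokhod-compact family is relatively compact in $\mathcal{E}$ (a standard consequence of the Arzel\`a--Ascoli characterisation of compacts in $D$); one then takes $K(t,\eps)$ to be its closure. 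For the second condition, I would take $\mathbb{F}$ to be the family of all bounded continuous real functions, which is closed under addition and separates points since $\mathcal{E}$, being metric, is completely regular Hausdorff; each $\tilde{f}$ is continuous on $D([0,\infty);\mathcal{E})$, so $\tilde{f}(\mathbf{K}_{\eps})$ is compact and $\{\tilde{f}_{*}\mu_i\}_{i \in I}$ is tight.

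For sufficiency, I would proceed in three steps. First, combining the compact-containment condition over a diagonal sequence of times $t \in \mathbb{N}$ and levels $\eps = 2^{-m}$, I would produce, for each $\eps$, a measurable set of paths confined on each $[0,m]$ to a fixed compact $K_m \subseteq \mathcal{E}$, of $\mu_i$-measure at least $1-\eps$ uniformly in $i$. On such compacts the subspace topology is metrisable, which is precisely where the hypothesis on metrisable compacts (inherited here because $\mathcal{E}$ is metric) enters. Second, I would invoke the Arzel\`a--Ascoli-type characterisation of relatively compact subsets of $D([0,\infty);\mathcal{E})$: pointwise relative compactness, already secured, together with a uniform vanishing of the $\mathcal{E}$-valued modulus $w'$ as $\eta \to 0$. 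The real-valued tightness of each $\{\tilde{f}_{*}\mu_i\}$ yields, via Lemma~\ref{lem:ethier-kurtz}, that $\lim_{\eta \to 0}\sup_i \mu_i(\{w'(f\circ x,\eta,T) \geq \delta\}) = 0$ for every $f \in \mathbb{F}$ and every $\delta, T > 0$.

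The crux, and the step I expect to be the main obstacle, is the third: transferring this oscillation control from the scalar projections $f \circ x$ back to the $\mathcal{E}$-valued path $x$ itself. Restricted to a compact $K_m$, the point-separating family $\mathbb{F}$ induces the subspace topology, and, crucially, its closure under addition lets one recover the metric of $K_m$, up to uniform equivalence, through finite sums $f_1 + \cdots + f_n$ of members of $\mathbb{F}$; a small $w'$ for each such finite sum then forces a small $w'$ for the path $x$ confined to $K_m$. Quantifying this implication — showing that $\lim_{\eta \to 0}\sup_i \mu_i(\{w'_{\mathcal{E}}(x,\eta,T) \geq \delta\}) = 0$ follows from the scalar estimates of the previous step — is the technical heart of the theorem and rests on a compactness and uniform-continuity argument on each $K_m$. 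Granting this, pointwise relative compactness together with the uniform modulus control produces, for each $\eps$, a relatively compact set of paths carrying $\mu_i$-mass at least $1-\eps$ uniformly in $i$, which is exactly the asserted tightness of $\{\mu_i\}_{i \in I}$.
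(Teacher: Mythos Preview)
The paper does not actually prove this lemma: it is stated with attribution to \cite[Theorem~4.6]{jakubowski} and immediately closed with a $\blacksquare$, so there is no argument in the paper to compare against. Your proposal goes well beyond what the paper does by sketching the two directions of Jakubowski's theorem itself.

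As a sketch, your outline tracks the standard proof reasonably well. A couple of points deserve care if you intend this as more than an outline. First, in the sufficiency direction, the passage from scalar modulus control to $\mathcal{E}$-valued modulus control is indeed the crux, but your description of how closure under addition is used is a bit loose: what is actually needed is that on each compact $K_m$ the family $\mathbb{F}$ generates the topology (this follows from point-separation and complete regularity), and then a finite subfamily suffices to control the metric on $K_m$ up to any prescribed $\delta$; closure under addition is what allows you to combine those finitely many scalar moduli into a single one via $f_1+\cdots+f_n \in \mathbb{F}$. Second, the ``uniform vanishing'' statement you extract from Lemma~\ref{lem:ethier-kurtz} is stated there only as $\lim_{\eta\to 0}\limsup_{n\to\infty}$ when $I=\mathbb{N}$, not as a uniform-in-$i$ bound; for general index sets $I$ one needs the slightly stronger formulation, which is what Jakubowski actually uses. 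These are refinements rather than gaps, but since the paper simply invokes the reference, the cleanest response is to do the same.
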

\hfill $\blacksquare$

{\bf Acknowledgements.}
We would like to thank Robert Patterson and Wolfgang K\"onig for some helpful discussions. %during the early stages of this work. 
This research has been partially funded by the
Deutsche Forschungsgemeinschaft (DFG) (project number 443759178) through grant SPP2265 ``Random Geometric Systems'', Project P01: `Spatial Coagulation and Gelation'. The authors acknowledge the support from Dipartimento di Matematica e Informatica ``Ulisse Dini'', University of Florence.  LA acknowledges partial support from~``Indam-GNAMPA''~Project~CUP\_E53C23001670001.

%\lu{Check bibliography and unify style, if needed}
\bibliographystyle{abbrv}
\bibliography{ref}

\end{document}